 \theoremstyle{plain}
 \newtheorem{theorem}{\textbf{Theorem}}[section]
 \newtheorem{proposition}[theorem]{\textbf{Proposition}}
 \newtheorem{lemma}[theorem]{\textbf{Lemma}}
 \newtheorem*{conjecture}{Conjecture}
\theoremstyle{definition}
 \newtheorem{example}[theorem]{\textbf{Example}}
 \newtheorem{definition}[theorem]{\textbf{Definition}}
 \newtheorem{remark}[theorem]{\textbf{Remark}}
 \numberwithin{equation}{section}
\renewcommand{\leq}{\leqslant}
\renewcommand{\geq}{\geqslant}
\newcommand{\M}{\mathbb{M}}
\newcommand{\R}{\mathbb{R}}
\newcommand{\RP}{\mathbb{RP}}
\newcommand{\T}{\mathbb{T}}
\newcommand{\K}{\mathbb{K}}
\newcommand{\vol}{{\rm vol}}
\newcommand{\area}{{\rm area}}
\newcommand{\sys}{{\rm sys}}
\newcommand{\cf}{{\it cf.}}
\newcommand{\ie}{{\it i.e.}}
\long\def\forget#1\forgotten{} %
\title{Optimal systolic inequalities on Finsler Mobius bands}
\subjclass[2010]{Primary 53C23; Secondary 53C60.}
\keywords{systole, systolic inequalities, Holmes-Thompson volume, Finsler metrics, extremal metrics, Mobius band}
\author[S. Sabourau and Z.Yassine]{St\'ephane Sabourau and Zeina Yassine}
\address{
Universit\'e Paris-Est, Laboratoire d'Analyse et Math\'ematiques Appliqu\'ees
(UMR 8050), UPEC, UPEMLV, CNRS, F-94010  \\ 
Cr\'eteil\\
France}
\email{stephane.sabourau@u-pec.fr}
\email{zeina.yassine@u-pec.fr}
\thanks{Partially supported by the French ANR project FINSLER } 
\begin{document}

\vspace{18mm} \setcounter{page}{1} \thispagestyle{empty}

\begin{abstract}
We prove optimal systolic inequalities on Finsler Mobius bands relating the systole and the height of the Mobius band to its Holmes-Thompson volume. 
We also establish an optimal systolic inequality for Finsler Klein bottles of revolution, which we conjecture to hold true for arbitrary Finsler metrics. 
Extremal metric families both on the Mobius band and the Klein bottle are also presented.
\end{abstract}

\maketitle

\section{Introduction} 
Optimal systolic inequalities were studied since the mid-twentieth century after C. Loewner proved in an unpublished work the following result, \cf~\cite{Ka07}. 
Every Riemannian two-torus $\T^2$ satisfies 
\begin{equation}\label{loewner}
\area(\T^2) \geq \frac{\sqrt{3}}{2} \sys^2(\T^2)
\end{equation}
with equality if and only if $\T^2$ is a flat hexagonal torus. 
Recall that the \emph{systole} of a nonsimply connected Riemannian surface~$M$, denoted by~$\sys(M)$, represents the length of the shortest noncontractible loop of~$M$. 
This inequality leads us to introduce the \emph{systolic area} of~$M$ defined as 
\begin{equation}\label{sysvol}
\sigma_{R}(M):= \inf_g \frac{\area(M,g)}{\sys^2(M,g)}.
\end{equation}
where $g$ runs over all the Riemannian metrics on $M$ (hence the subscript~$R$ for Riemannian). 
Thus, $\sigma_{R}(\T^2)=\frac{\sqrt{3}}{2}$.
Following this direction, P.~Pu~\cite{Pu52} showed that $\sigma_{R}(\RP^2)=\frac{2}{\pi}$, where the infimum is attained exactly by the Riemannian metrics with constant (positive) curvature on the projective plane $\RP^2$. 
In the eighties, C.~Bavard~\cite{Ba86} proved that $\sigma_{R}(\K^2)=\frac{2\sqrt{2}}{\pi}$, where the infimum on the Klein bottle~$\K^2$ is not attained by a smooth Riemannian metric.
See also \cite{Sak88}, \cite{Ba88} and \cite{Ba06} for other proofs and variations on this inequality. 
These are the only nonsimply connected closed surfaces with a known systolic area.
The existence of extremal metrics in higher dimension is wide open.

\medskip

The original proofs of the optimal Riemannian systolic inequalities on $\T^2$, $\RP^2$ and $\K^2$ rely on the conformal representation theorem (a consequence of the uniformization theorem on Riemann surfaces) and proceed as follows.
By the uniformization theorem, every Riemannian metric~$g$ on a closed surface is conformally equivalent to a Riemannian metric~$g_0$ of constant curvature.
Taking the average of~$g$ over the isometry group of~$g_0$ gives rise to a new metric~$\bar{g}$ with the same area as~$g$.
By the Cauchy-Schwarz inequality, the systole of~$\bar{g}$ is at most the systole of~$g$.
Thus, the new metric~$\bar{g}$ has a lower ratio $\area / \sys^2$ than the original metric~$g$.
Now, if the isometry group of~$g_0$ is transitive, which is the case for $\T^2$ and~$\RP^2$, the metric~$\bar{g}$ has constant curvature.
Hence the result for the projective plane.
Then, it is not difficult to find the extremal metric among flat torus.
The case of the Klein bottle requires an extra argument since the isometry group of~$g_0$ is not transitive, \cf~Section~\ref{sec:klein}. \\

\forget
Indeed, let $\left(M,g\right)$ be a Riemannian  surface. 
Then by the uniformization theorem, $g$ is conformally equivalent to a Riemannian metric $g_0$ of constant curvature on $M$, \ie, there exists a function $f:M\rightarrow \mathbb{R}$ such that $g=f^2 g_0$. We deduce by the Cauchy-Schwarz inequality that  
\begin{equation}\label{uniformizationtheorem}
\frac{\area(M,g)}{\sys^2(M,g)}\geq\frac{\area(M,\bar{f}^2g_0)}{\sys^2(M,\bar{f}^2g_0)},
\end{equation}
where $\bar{f}$ is the integration of $f$ over the isometry group of $(M,g_0)$. Now, if the latter is transitive (this is the case of $\T^2$ and $\RP^2$), we conclude that $\bar{f}$ is constant and we can easily then deduce the extremal Riemannian metrics on $M$.
\forgotten

In this article, we consider Finsler systolic inequalities. 
Loosely speaking, a Finsler metric~$F$ is defined as a Riemannian metric except that its restriction to a tangent plane is no longer a Euclidean norm but a Minkowski norm, \cf~Section~\ref{preliminaries}. 
From a dynamical point of view, the function~$F^2$ can be considered as a Lagrangian which induces a Lagrangian flow on the tangent bundle~$TM$ of~$M$. 
Thus, Finsler manifolds can be considered as degree~$2$ homogeneous Lagrangian systems.
The trajectories of the Lagrangian correspond to the geodesics of the Finsler metric.

\medskip

There exist several definitions of volume for Finsler manifolds which coincide in the Riemannian case. 
We will consider the Holmes-Thompson volume~$\vol_{HT}$, \cf~Section~\ref{preliminaries}. 
As previously, we can define the systolic area~$\sigma_F$, with the subscript~$F$ for Finsler, by taking the infimum in~\eqref{sysvol} over all Finsler metrics on $M$.

\medskip

Contrary to the Riemannian case, there is no uniformization theorem for Finsler surfaces. 
As a result, the classical Riemannian tools to prove optimal systolic inequalities on surfaces, which are based on the conformal length method described above, do not carry over to the Finsler case.
New methods are thus required to deal with Finsler metrics.

\medskip

The first optimal Finsler systolic inequality has been obtained by S.~Ivanov \cite{Iv02, Iv11} who extended Pu's systolic inequality to Finsler projective planes. 

\begin{theorem}[\cite{Iv02,Iv11}]\label{Iv02}
Let $\RP^2$ be a Finsler projective plane.
Then
$$
\frac{\vol_{HT}(\RP^2)}{\sys^2(\RP^2)}\geq\frac{2}{\pi}.
$$
Furthermore, equality holds if all the geodesics are closed of the same length.
\end{theorem}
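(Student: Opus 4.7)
The plan is to replace the conformal averaging argument (which is unavailable in the Finsler setting due to the absence of uniformization) by a symplectic/integral-geometric argument based on the definition of the Holmes-Thompson volume as a normalized Liouville volume of the unit co-disk bundle. Lifting to the double cover $\pi: S^2 \to \RP^2$, the pulled-back Finsler metric $\tilde F$ satisfies $\vol_{HT}(S^2,\tilde F) = 2 \vol_{HT}(\RP^2,F)$ while $\sys(\RP^2,F) = \min_{x\in S^2} d_{\tilde F}(x,-x)$. Normalizing so that $\sys(\RP^2,F) = L$, it suffices to prove $\vol_{HT}(S^2,\tilde F) \ge \tfrac{4L^{2}}{\pi}$, with the round metric of diameter $L$ (radius $L/\pi$) saturating this bound since $4\pi (L/\pi)^2 = 4L^2/\pi$.

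The first step is to express
$$
\vol_{HT}(S^2,\tilde F) \;=\; \frac{1}{\pi}\int_{B^*S^2}\frac{\omega\wedge\omega}{2},
$$
where $\omega$ is the canonical symplectic form on $T^*S^2$ and $B^*S^2$ denotes the co-disk bundle associated to the dual norm of $\tilde F$. Since $\omega\wedge\omega/2$ factors along the flow tubes of the co-geodesic flow as a product of a transverse symplectic form on the space of orbits with $d\ell$, a Santal\'o-type Fubini rewrites this integral as an integral of $\tilde F$-lengths over the set of maximal oriented geodesic arcs.

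The second step uses the antipodal systole bound: for every $p\in S^2$ and every co-direction $\xi \in B^*_p S^2$, the $\tilde F$-geodesic issued from $(p,\xi)$ must travel $\tilde F$-length at least $L$ before reaching $-p$. The goal is then to build a Liouville-preserving comparison map from the unit co-disk bundle of the round sphere of radius $L/\pi$ to $(B^*S^2,\omega)$ that sends each round geodesic arc of length $L$ to an $\tilde F$-geodesic arc of length $L$ issued in the corresponding direction. The image has symplectic volume equal to that of the round model (by Liouville), and the systole bound guarantees that the correspondence is well-defined and injective up to length $L$. Integrating yields $\vol_{HT}(S^2,\tilde F) \ge 4L^2/\pi$, and the equality case is forced precisely when every $\tilde F$-geodesic closes up at length $2L$, matching the extremality condition in the statement.

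The main obstacle is constructing this comparison map and showing that it is symplectic, since Finsler geodesic flows carry no a priori periodicity or symmetry: one must combine the pointwise systole bound with a careful decomposition of $\omega\wedge\omega$ along the flow (so that the length inequality over each orbit can be converted into a lower bound on symplectic volume of the traced-out region) and verify that the traced-out region exhausts enough of $B^*S^2$ to recover the round volume. This is the step where Ivanov's argument genuinely departs from the Riemannian template, and I expect it to require the subtle Liouville/first-return analysis characteristic of Finsler integral geometry rather than any direct conformal trick.
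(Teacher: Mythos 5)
Note first that the paper contains no proof of Theorem~\ref{Iv02}: it is imported from Ivanov \cite{Iv02,Iv11}, so your sketch has to stand on its own, and it does not. The gap sits exactly where you locate ``the main obstacle'': the Liouville-preserving comparison map is postulated, never constructed, and none of the properties you assign to it follow from what you have established. There is no canonical notion of the $\tilde F$-geodesic ``issued in the corresponding direction'' to a given round covector: a fibrewise identification of the Finsler co-disks with the round ones can be chosen to preserve fibre area, but it will not intertwine the two co-geodesic flows, while a flow-adapted (Santal\'o) parametrization on the Finsler side is defined only relative to the space of geodesics, whose symplectic area is precisely the unknown quantity you want to bound. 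The antipodal estimate $d_{\tilde F}(x,-x)\geq L$ says only that each orbit needs length at least $L$ to reach the antipodal fibre; it does not make your correspondence well defined, injective, or measure preserving, and converting ``every orbit is long'' into the sharp phase-space volume bound $\frac{4}{\pi}L^2$ is the entire content of the theorem, not a consequence of the setup. Soft Santal\'o/John-type comparisons of the kind you describe are exactly what produce non-sharp constants (compare Proposition~\ref{prop:nonsharp} of this paper, which gets $\frac{\sqrt{2}}{\pi}$ that way); the constant $\frac{2}{\pi}$ is not reachable by such a direct comparison.

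For the record, Ivanov's actual route is different in kind: one cuts the Finsler projective plane along a systolic geodesic to obtain a Finsler disc whose boundary has length $2L$ and whose boundary-distance function dominates that of the round hemisphere with boundary length $2L$, and the inequality then follows from the filling minimality of the hemisphere among Finsler discs, which is proved through the symplectic structure on the space of geodesics and the boundary-distance data (a Crofton-type argument), not through a volume-preserving map from the round phase space. Finally, your treatment of the equality case reverses the logic of the statement: the theorem asserts equality \emph{if} all geodesics are closed of the same length, which requires the (omitted) verification that such metrics satisfy $\vol_{HT}(\RP^2)=\frac{2}{\pi}\sys^2(\RP^2)$, whereas you claim equality is ``forced precisely when'' all geodesics close up at length $2L$ -- an unproved converse that in any case does not supply the direction actually stated.
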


In particular, the systolic area of the projective plane is the same in the Riemannian and Finsler settings, that is, 
$$
\sigma_R(\RP^2)=\sigma_F(\RP^2)=\frac{2}{\pi}.
$$
Note that Theorem \ref{Iv02} provides an alternate proof of Pu's inequality in the Riemannian case which does not rely on the uniformization theorem. 

\medskip 

Using a different method based on~\cite{Gr99} and~\cite{BI02}, a Finsler version of Loewner's inequality \eqref{loewner} has been obtained by the first author \cite{Sa10}.

\begin{theorem}[\cite{Sa10}]\label{Sa10}
Let $\T^2$ be a Finsler two-torus.
Then 
$$
\frac{\vol_{HT}(\T^2)}{\sys^2(\T^2)}\geq\frac{2}{\pi}.
$$
Equality holds if $\T^2$ is homothetic to the quotient of $\mathbb{R}^2$, endowed with a parallelogram norm $||.||$, by a lattice whose unit disk of $||.||$ is a fundamental domain.
\end{theorem}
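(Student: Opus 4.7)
The plan is to follow the approach suggested by the citation, combining Gromov's stable-norm formalism~\cite{Gr99} with the Burago--Ivanov asymptotic volume comparison~\cite{BI02}, and reducing the problem to a geometry-of-numbers statement for normed planes.

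Attach to $(\T^2,F)$ its \emph{Jacobi torus} $J=\mathbb{R}^2/\Lambda$, built from the stable norm $\|\cdot\|_{st}$ on $H_1(\T^2,\mathbb{R})\cong\mathbb{R}^2$ and the integer lattice $\Lambda=H_1(\T^2,\mathbb{Z})$. The theorem of~\cite{BI02} states that the Albanese map $\T^2\to J$ has degree one and does not increase Holmes--Thompson volume, so
$$
\vol_{HT}(\T^2,F)\;\geq\;\vol_{HT}(J).
$$
A shortest noncontractible loop $\gamma$ in $(\T^2,F)$ is a simple closed geodesic representing a primitive class $\alpha\in H_1(\T^2,\mathbb{Z})$; on a $2$-torus, $\gamma$ and all its integer multiples are length-minimizing in their homology classes, so $\|\alpha\|_{st}=\sys(F)$. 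Combined with the obvious bound $\|\beta\|_{st}\leq\ell_F(\beta)$ for every integral class $\beta$, this forces $\sys(J)=\sys(F)$ and reduces the inequality to the flat Finsler (Minkowski) case.

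On the Minkowski torus $J=\mathbb{R}^2/\Lambda$ with unit ball $B\subset\mathbb{R}^2$, the Holmes--Thompson volume is
$$
\vol_{HT}(J)=\frac{|B^*|\,\det\Lambda}{\pi},
$$
where $B^*$ denotes the polar body, and the systole is the first successive minimum $\lambda_1=\lambda_1(B,\Lambda)$. Mahler's planar inequality gives $|B|\cdot|B^*|\geq 8$, tight precisely for parallelograms, and Minkowski's second theorem gives $\lambda_1\lambda_2|B|\leq 4\det\Lambda$. Combining these and using $\lambda_2\geq\lambda_1$ produces
$$
|B^*|\det\Lambda\;\geq\;\frac{8\det\Lambda}{|B|}\;\geq\;2\lambda_1\lambda_2\;\geq\;2\lambda_1^2\;=\;2\sys^2(J),
$$
and dividing by $\pi$ completes the argument.

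For the equality case one traces the chain: equality in Burago--Ivanov forces $F$ to be flat Finsler; equality in Mahler forces $B$ to be a parallelogram; and equality in Minkowski's second theorem combined with $\lambda_1=\lambda_2$ forces $B$ to be a fundamental domain of $\Lambda$, reproducing the extremals stated in the theorem. The main obstacle is not the Mahler--Minkowski computation, which is essentially elementary, but the preparatory identification $\sys(F)=\sys(J)$, which rests on the two-dimensional feature that a systolic simple closed geodesic has iterates that remain length-minimizing in homology, a property without analogue in higher dimensions.
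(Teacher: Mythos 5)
You should first note that the paper does not prove this statement: it is quoted from~\cite{Sa10}, whose method the introduction describes as ``based on~\cite{Gr99} and~\cite{BI02}''. Your route --- pass to the flat torus $J=(\mathbb{R}^2/\Lambda,\|\cdot\|_{st})$ of the stable norm via the Burago--Ivanov area comparison, then treat the Minkowski case by geometry of numbers --- is exactly that method, and your flat-case computation is correct: $\vol_{HT}(J)=\tfrac{1}{\pi}|B^*|\det\Lambda$, Mahler's bound $|B||B^*|\ge 8$ and Minkowski's theorem $\lambda_1^2|B|\le\lambda_1\lambda_2|B|\le 4\det\Lambda$ do give $\vol_{HT}(J)\ge\tfrac{2}{\pi}\lambda_1^2=\tfrac{2}{\pi}\sys^2(J)$. (A side remark: the statement to cite from \cite{BI02} is simply that the Holmes--Thompson area of a Finsler two-torus is at least that of its stable-norm flat torus; whether this is phrased through an area-nonincreasing Albanese map is immaterial here.)

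The genuine gap is in the sentence ``Combined with the obvious bound $\|\beta\|_{st}\le\ell_F(\beta)$ \dots this forces $\sys(J)=\sys(F)$''. Both facts you invoke --- $\|\alpha\|_{st}=\sys(F)$ for the systolic class and $\|\beta\|_{st}\le\ell_F(\beta)$ for all $\beta$ --- are upper bounds on stable norms and only yield $\sys(J)\le\sys(F)$, which is useless: after the Burago--Ivanov step and the flat-case bound you have $\vol_{HT}(\T^2,F)\ge\tfrac{2}{\pi}\sys^2(J)$, so what you must prove is $\sys(J)\ge\sys(F)$, i.e.\ $\|\beta\|_{st}\ge\sys(F)$ for \emph{every} nonzero integral class $\beta$, not just the systolic one; a priori some other class could have stable norm strictly below the systole even though all its loop representatives are long, and nothing in your text excludes this. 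The two-dimensional fact you flag at the end is indeed the cure, but it must be stated for all classes and actually proved: a length-minimizing $1$-cycle in class $k\beta$ (with $\beta$ primitive) on a reversible Finsler two-torus decomposes, after the standard crossing-exchange/shortcut argument (which uses the strict triangle inequality), into pairwise disjoint essential simple closed geodesics; disjoint essential simple closed curves on $\T^2$ are parallel, so all components lie in $\pm\beta$ and there are at least $k$ of them, each of length at least $\sys(F)$, whence $\ell(k\beta)\ge k\,\sys(F)$ and $\|\beta\|_{st}\ge\sys(F)$. With that lemma inserted your argument proves the inequality. Finally, the theorem only asserts the ``if'' direction of the equality case, so it suffices to check directly that the parallelogram-norm quotients have ratio $\tfrac{2}{\pi}$; your ``only if'' chain through the equality case of \cite{BI02} is unnecessary and harder to substantiate.
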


Observe that $\sigma_F(\T^2)=\sigma_F(\RP^2)$ contrary to the Riemannian case. An optimal Finsler systolic inequality holds for non-reversible Finsler metrics on $\T^2$, \cf~\cite{ABT}. Note also that there is no systolic inequality for non-reversible Finsler two-tori if one considers the Busemann volume instead of the Holmes-Thompson volume, \cf~\cite{AB}. 


\medskip

No systolic inequality holds for manifolds with boundary either.
However, P.~Pu \cite{Pu52} and  C.~Blatter \cite{Bl62} obtained optimal Riemannian systolic inequalities in each conformal class of the Mobius band and described the extremal metrics, \cf~Section~\ref{sec:candidates}. 
Later, these  inequalities were used by C.~Bavard \cite{Ba86} and T.~Sakai \cite{Sak88} in their proofs of the systolic inequality on the Klein bottle in the Riemannian case. 
The proof of the optimal conformal Riemannian systolic inequalities on the Mobius band relies on the uniformization theorem and the conformal length method (as in the original proofs of the Riemannian systolic inequalities on $\T^2$, $\RP^2$ and~$\K^2$).

\forget
among manifolds with boundary, one cannot expect to get a positive systolic constant. We only know that in the Riemannian case, the Mobius band $\M$ does have an extremal metric in each conformal class, \cf~\cite{Ba88}. P.~Pu \cite{Pu52} and  C.~Blatter \cite{Bl62} obtained conformal systolic inequalities on $\M$, \cf~Section~\ref{sec:candidates}. 
These inequalities were used later by C.~Bavard \cite{Ba86} and T.~Sakai \cite{Sak88} in their proofs of the systolic inequality on the Klein bottle in the Riemannian case. 
As in the original proofs of the Riemannian systolic inequalities on $\T^2$, $\RP^2$ and~$\K^2$, the uniformization theorem was also essential in the proof of the conformal systolic inequalities on $\M$. 
\forgotten

\medskip

In this article, we first prove a Finsler generalization of the optimal systolic inequality on~$\T^2$ extending Loewner's inequality, \cf~\cite{Ke67}, and derive further optimal geometric inequalities on Finsler cylinders, \cf~Section~\ref{sec:torus}. 
These results allow us to establish an optimal inequality on every Finsler Mobius band~$\M$ relating its systole~$\sys(\M)$, its height~$h(\M)$ and its (Holmes-Thompson) volume~$\vol_{HT}(\M)$ at least when $\M$ is wide enough, \cf~Section~\ref{sec:wide}.
Here, the height~$h(\M)$ represents the minimal length of arcs with endpoints on the boundary~$\partial \M$, which are not homotopic to an arc in~$\partial \M$, \cf~Definition~\ref{heightdefinition}.
More precisely, we prove the following.

\begin{theorem}\label{finslermobius}
Let $\M$ be a Finsler Mobius band. Let $\lambda:=\frac{h(\M)}{\sys(\M)}$. 
Then
\begin{equation}\label{FM}
\frac{\vol_{HT}(\M)}{\sys(\M) \, h(\M)} \geq
\begin{cases}
\frac{2}{\pi} &  \text{if } \lambda \in (0,1] \\
\frac{1}{\pi}\frac{\lambda+1}{\lambda} & \text{otherwise}.
\end{cases}
\end{equation}
Moreover, the above inequalities are optimal for every value of $\lambda\in(0,+\infty)$.
\end{theorem}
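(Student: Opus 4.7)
The plan is to reduce the inequality on the Finsler Mobius band $\M$ to the optimal systolic inequalities for Finsler cylinders derived in Section~\ref{sec:torus}, by passing to the orientation double cover.

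Let $\pi: C \to \M$ denote the orientation double cover; since $\pi$ is a local isometry, $\vol_{HT}(C) = 2 \vol_{HT}(\M)$. The boundary $\partial\M$ is two-sided, as it is freely homotopic to twice the central circle (which is the orientation-reversing generator of $\pi_1(\M) \cong \mathbb{Z}$); hence $\pi^{-1}(\partial\M)$ consists of two disjoint circles, each projecting isometrically onto $\partial\M$, forming the two boundary components of the cylinder $C$. Any minimizing non-boundary-homotopic arc realizing $h(\M)$ lifts to an arc joining the two boundary components of $C$, so that $h(C) = h(\M)$. The central circle of $\M$ lifts to a path of length $\sys(\M)$ connecting a point to its deck translate, whose double is a noncontractible loop of length $2\sys(\M)$ in $C$; conversely, any noncontractible loop in $C$ projects to a noncontractible loop in $\M$ of the same length. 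Therefore $\sys(\M) \leq \sys(C) \leq 2\sys(\M)$.

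I would then apply the Finsler cylinder inequalities from Section~\ref{sec:torus} to $C$ and translate back to $\M$ using the relations above. The two regimes in~\eqref{FM} reflect two different cylinder inequalities: the one applicable when $\sys(C) \geq 2 h(C)$ yields $\vol_{HT}(\M) \geq \frac{2}{\pi}\sys(\M) h(\M)$ in the wide regime $\lambda \leq 1$, while a complementary cylinder inequality involving an asymmetric combination of $\sys(C)$ and $h(C)$ produces the bound $\vol_{HT}(\M) \geq \frac{1}{\pi}(1+1/\lambda)\sys(\M) h(\M)$ in the narrow regime $\lambda > 1$. The transition at $\lambda = 1$ corresponds to the switch between these two competing cylinder bounds.

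The main technical obstacle is that $\sys(C)$ is not generally equal to $2\sys(\M)$: a short boundary $\partial\M$ lifts to a pair of short noncontractible loops in $C$, which can drive $\sys(C)$ strictly below $2\sys(\M)$. The proof therefore requires a case analysis according to which class of noncontractible loop realizes $\sys(C)$ --- the double of the central circle, a boundary-parallel loop, or an intermediate configuration --- and a careful matching of each case with the appropriate cylinder inequality, so that any loss in $\sys(C)$ is compensated by the sharpness of the corresponding cylinder bound.

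Finally, optimality for every $\lambda \in (0,+\infty)$ is established by exhibiting explicit extremal families from Section~\ref{sec:candidates}: for $\lambda > 1$, flat Mobius bands equipped with a suitable parallelogram norm, analogous to the extremal torus of Theorem~\ref{Sa10}; for $\lambda \in (0, 1]$, Mobius bands obtained by removing an appropriately chosen metric disk from a round Finsler projective plane in the sense of Theorem~\ref{Iv02}. A direct computation then confirms that these families saturate~\eqref{FM} throughout the range, establishing sharpness of the inequality.
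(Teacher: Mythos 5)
Your reduction to the orientation double cover cannot deliver the sharp constants, and the key tool you invoke does not exist. The only cylinder inequality available (Proposition~\ref{finslercylindre}) gives $\vol_{HT}(C)\geq\frac{2}{\pi}\sys(C)\,h(C)$, and since $\sys(C)=\sys_{+}(\M)$ can be as small as $\sys(\M)$ (the boundary $\partial\M$ may be essentially as short as the systole), this yields only $\vol_{HT}(\M)\geq\frac{1}{\pi}\sys(\M)\,h(\M)$ in general --- a factor $2$ short in the regime $\lambda\leq 1$, and never producing the additive term needed for $\lambda>1$, where the sharp bound reads $\vol_{HT}(\M)\geq\frac{1}{\pi}\sys(\M)(\sys(\M)+h(\M))$. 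The ``complementary cylinder inequality involving an asymmetric combination of $\sys(C)$ and $h(C)$'' is asserted but never stated or proved, and it cannot exist in the form you need: the pair $(\sys(C),h(C))$ of the double cover does not remember the nonorientable structure, whereas the extra $\sys^2$ term in the wide case comes precisely from that structure --- in the paper it is obtained by cutting off a boundary collar (to which the cylinder inequality is applied), collapsing the boundary of the remaining core Mobius band to a point, and applying Ivanov's theorem (Theorem~\ref{Iv02}) to the resulting $\RP^2$; see Proposition~\ref{prop:wide} and Lemmas~\ref{case2h}--\ref{metricmobius}. Likewise, the narrow case $\lambda<1$ is not a soft consequence of any cylinder bound: the paper proves it by doubling $\M$ across its boundary into a Klein bottle, cutting along the second soul (Lemma~\ref{lem:1/2}), a weighted splitting of the Klein bottle into two Mobius bands (Lemma~\ref{lem:(x+y)/2}), and an induction over dyadic ratios $h/\sys$ followed by a continuity argument (Proposition~\ref{prop:narrow}). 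Your ``careful case analysis according to which loop realizes $\sys(C)$'' is a placeholder for exactly this missing core of the proof.

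The optimality discussion is also off. Flat Mobius bands carrying a parallelogram (sup-)norm are extremal precisely in the range $\lambda\in(0,1]$ (Example~\ref{ex:extremal-narrow}), not for $\lambda>1$; for $\lambda\geq 1$ the paper's equality examples are obtained by gluing a flat sup-norm cylinder to the non-flat band $\M_{F_{\pi/3}}$ built from the spherical construction (Example~\ref{ex:extremal-wide}), and a purely flat band plus cylinder is only almost extremal. Moreover, removing a metric disk from an extremal Finsler $\RP^2$ does not saturate~\eqref{FM} for $\lambda<1$: a direct computation (already in the Riemannian round case, with a disk of radius $\rho$ removed one gets ratio $\frac{2\cos\rho}{\pi(1-2\rho/\pi)}>\frac{2}{\pi}$) shows equality only in the limit $\rho\to 0$, i.e.\ $\lambda\to 1$. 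So even the sharpness part of your argument would need to be redone with the correct families.
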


We describe extremal and almost extremal metric families in details in Section~\ref{sec:candidates}, Example~\ref{ex:extremal-wide} and Example~\ref{ex:extremal-narrow}. \\

The optimal Finsler systolic inequality on the Klein bottle is still unknown. 
However, based on the inequality~\eqref{FM} on Finsler Mobius bands, we obtain a partial result for Finsler Klein bottles with nontrivial symmetries. 
We refer to Definition~\ref{def:sym} for a description of the symmetries considered in the statement of the following theorem.

\begin{theorem}\label{finslerkleinbottle}
Let $\K$ be a Finsler Klein bottle with a soul, soul-switching or rotational symmetry. Then
\begin{equation} \label{k}
\frac{\vol_{HT}(\K)}{\sys^2(\K)}\geq \frac{2}{\pi}.
\end{equation}
Moreover, the inequality is optimal.
\end{theorem}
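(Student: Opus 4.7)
The plan is to reduce Theorem~\ref{finslerkleinbottle} to the Mobius band inequality of Theorem~\ref{finslermobius} by cutting~$\K$ along a one-sided simple closed curve produced by the symmetry. First, in each of the three cases, I would isolate a distinguished one-sided simple closed curve $\gamma \subset \K$ realizing the systole: in the soul-symmetry case, $\gamma$ is the fixed curve of the involution; in the soul-switching case, $\gamma$ is one of the two interchanged souls (of equal length by symmetry); in the rotational case, $\gamma$ is a shortest exceptional orbit of the $S^1$-action. A short averaging argument using the symmetry shows that $\gamma$ can be taken of length $\sys(\K)$.

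Second, cutting $\K$ along~$\gamma$ produces a Finsler Mobius band~$\M$ with $\vol_{HT}(\M) = \vol_{HT}(\K)$, and the inclusion of the soul class of~$\M$ into $\pi_1(\K)$ yields $\sys(\M) \geq \sys(\K)$. The key step is to establish the length estimate $h(\M) \geq \sys(\K)$. Given an arc~$c$ in~$\M$ joining two points of $\partial \M$ and not homotopic into $\partial \M$, its image in~$\K$ is a path with endpoints on~$\gamma$; using the symmetry (which fixes~$\gamma$ in the soul case, permutes two souls in the soul-switching case, and preserves orbits in the rotational case), one combines~$c$ with its symmetric image and, through a careful lift to the orientation double cover $\widetilde{\K} = \T^2$, constructs a non-contractible loop in~$\K$ of length at most $\ell(c)$, where $\ell(c)$ denotes the length of~$c$. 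This yields $h(\M) \geq \sys(\K)$.

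With both inequalities $\sys(\M) \geq \sys(\K)$ and $h(\M) \geq \sys(\K)$ in hand, applying Theorem~\ref{finslermobius} to~$\M$ and distinguishing the two regimes $\lambda \leq 1$ and $\lambda > 1$ (with $\lambda := h(\M)/\sys(\M)$) yields in either case
$$
\vol_{HT}(\K) = \vol_{HT}(\M) \geq \frac{2}{\pi}\, \sys(\K)^2.
$$
Optimality is demonstrated by a family of Finsler metrics on~$\K$ degenerating to Ivanov's extremal Finsler~$\RP^2$ metric from Theorem~\ref{Iv02}, obtained via a singular quotient map collapsing one soul.

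The principal difficulty is the length estimate $h(\M) \geq \sys(\K)$. A naive doubling argument combining $c$ with its symmetric image only yields $h(\M) \geq \sys(\K)/2$, which falls short of the sharp constant $2/\pi$. Exploiting the fine geometry of the symmetry---either through a first-variation argument identifying extremal arcs with geodesic meridians perpendicular to~$\gamma$, or by carefully tracking an arc in the orientation double cover~$\widetilde\K$ where the lifted symmetry provides an additional translation---is essential to recover the missing factor of two.
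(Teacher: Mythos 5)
Your overall reduction is the same as the paper's: cut $\K$ along a distinguished soul to get a Finsler Mobius band $\M$ with $\vol_{HT}(\M)=\vol_{HT}(\K)$ and $\sys(\M)\geq\sys(\K)$, prove the key estimate $h(\M)\geq\sys(\K)$, and then apply Theorem~\ref{finslermobius} in both regimes $\lambda\leq 1$ and $\lambda>1$ (this is exactly Lemma~\ref{kcase1} in the paper). However, the proposal has a genuine gap precisely at the step you yourself flag as the principal difficulty: the inequality $h(\M)\geq\sys(\K)$ is never actually proved. You observe correctly that the naive symmetrization (concatenating a height arc $c$ with its symmetric image) only yields a noncontractible loop of length $2\ell(c)$, hence $h(\M)\geq\tfrac{1}{2}\sys(\K)$, and you then only gesture at ``first variation'' or ``a careful lift to the double cover'' as ways to recover the factor of two. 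But this is where the three symmetry hypotheses do all the work, and each case needs its own argument. In the paper: for the soul symmetry one shows the height arc is itself invariant under the involution --- replace its longer half by the reflection of the shorter half, invoke minimality and uniqueness of geodesic extension to conclude the arc coincides with its reflection --- so the single arc (not its double) already closes up to a noncontractible loop in $\K$; for the soul-switching symmetry one uses $h(\M)=2h(\M_1)$ and the union of a height arc of $\M_1$ with its mirror image, which has length exactly $h(\M)$ and closes up in $\K$; for the rotational symmetry one uses the first variation formula to see the height arc meets $\partial\M$ perpendicularly at both endpoints, deduces that the rotation matching one endpoint to the other must be by angle $\pi$, and hence that the arc descends to a closed noncontractible curve in $\K$. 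None of these arguments, nor any substitute for them, appears in your proposal, so the proof is incomplete in all three cases.

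Two smaller points. First, your claim that the distinguished curve ``can be taken of length $\sys(\K)$'' by ``a short averaging argument'' is both unnecessary (the final computation only uses $\sys(\M)\geq\sys(\K)$ and $h(\M)\geq\sys(\K)$, never the length of the cutting curve) and unjustified: averaging is exactly the Riemannian tool that is unavailable for Finsler metrics, and the systole of a Klein bottle need not be realized by a one-sided curve at all. Second, for optimality you do not need a degenerating family collapsing a soul onto an extremal $\RP^2$; equality is attained exactly, e.g.\ by the quotient of $(\mathbb{R}^2,\sup\text{-norm})$ by the glide reflection with $b=\tfrac{\pi}{2}$ (volume $2\pi$, systole $\pi$), or by $\M_{F_{\pi/3}}$ with opposite boundary points identified, and these metrics have all three symmetries.
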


We also present some extremal metric family in Example~\ref{ex:extremal2}. \\

Finally, we present as a conjecture that the inequality~\eqref{k} should hold for every Finsler Klein bottle with or without symmetries.
That is, $\sigma_F(\K)$ should be equal to~$\frac{2}{\pi}$ (as $\sigma_F(\T^2)$ and $\sigma_F(\RP^2)$).

\section{Preliminaries}\label{preliminaries}

In this section, we introduce general definitions regarding Finsler manifolds.

\medskip

A (reversible) \emph{Finsler metric} $F : TM \rightarrow [0,+\infty)$ on the tangent bundle $TM$ of a smooth n-dimensional manifold $M$ is a continuous function satisfying the following conditions (for simplicity, let $F_{x}:=F|_{T_xM}$):
\begin{enumerate}
\item Smoothness: $F$ is smooth outside the zero section; \label{1}
\item Homogeneity: $F_{x}(tv) = |t|F_{x}(v)$ for every $v \in T_xM$ and $t \in \mathbb{R}$; \label{2}
\item Quadratic convexity: for every $x \in M$, the function $F_{x}^2$ has positive definite second derivatives on $T_xM\setminus{0}$, \ie, if $p$, $u$, $v\in T_{x}M$, the symmetric bilinear form
\begin{equation*}
g_p(u,v):=\frac{1}{2}\frac{\partial^2}{\partial s\partial t}\left(F^{2}_{x}(p+tu+sv)\right)|_{t=s=0}
\end{equation*}
is an inner product. \label{3}
\end{enumerate}

The pair $(M, F)$ is called a \emph{Finsler manifold}. If $F$ is only positive homogeneous instead of homogeneous, that is, \eqref{2} only holds for $t\geq 0$, we say that the Finsler metric is non-reversible. For simplicity, we will only consider reversible Finsler metrics.
 
Conditions \eqref{1}, \eqref{2} and \eqref{3} imply that $F$ is strictly positive outside the zero section and that for every $x \in M$ and $u,v \in T_xM$, we have 
\begin{equation*}
F_x(u+v)\leq F_x(u)+F_x(v),	
\end{equation*}
with equality if and only if $u=\lambda v$ or $v=\lambda u$ for some $\lambda\geq 0$, \cf~\cite[Theorem~1.2.2]{BCS00}. 
Hence, $F$ induces a strictly convex norm $F_{x}$ on each tangent space $T_{x}M$ with $x\in M$. 
More specifically, it gives rise to a Minkowski norm~$F_x$ on each tangent symmetric space~$T_x M$.
Working with quadratically convex norms and not merely (strictly) convex norms provides nice dynamical features such as a geodesic flow and a Legendre transform, \cf~\cite[\S1]{Be78}.

\medskip

As in the Riemannian case, notions of length, distance, and geodesics extend to Finsler geometry. Let $\gamma:[a,b]\rightarrow M$ be a piecewise smooth curve. The length of $\gamma$ is defined as 
\[
\ell(\gamma):=\int^{b}_{a}F(\dot{\gamma}(t))dt.
\]
By condition~\eqref{2}, $\ell(\gamma)$ does not depend on the parametrization of~$\gamma$. 
Moreover, the functional $\ell$ gives rise to a distance function $d_F : M \times M \rightarrow [0,\infty)$ defined as $d_{F}(x, y)=\inf_\gamma \ell(\gamma)$, where the infimum is taken over all piecewise smooth curves $\gamma$ joining $x$ to $y$. 
A geodesic is a curve which locally minimizes length. It is a critical point of the energy functional $\gamma\mapsto \int F^2(\gamma(t)) dt$ (here the quadratic convexity condition~\eqref{3} is necessary).

\medskip

For $x \in M$, we denote by $B_x$ the unit ball of the Minkowski norm $F_x$, that is, $B_x = \left\{v \in T_xM\cong\mathbb{R}^n \mid F_x(v) \leq 1\right\}$. Furthermore, the norm $F_x$ is Euclidean if and only if $B_x$ is an ellipsoid. 
The dual of $B_x$ is defined as $B^{*}_{x}=\left\{f\in T_{x}^{*}M\mid F_{x}^{*}(f)\leq 1\right\}$ where $F^{*}_{x}$ is the dual norm of $F_{x}$. 
Note that $B^{*}_{x}$ identifies with the polar body $B^{\circ}_{x}=\left\{ u \in T_xM\mid \langle u,v\rangle\leq 1 \text{ for every } v\in B_x\right\}$ of $B_x$, where $\langle.,.\rangle$ is a given scalar product on $T_xM$.

\medskip

In the Riemannian case, there exists a unique notion of volume, up to normalization, which agrees both with the n-dimensional Hausdorff measure determined by the Riemannian metric and with the projection of the Liouville measure from the unit tangent bundle, \cf~\cite[\S5.5]{BBI01}. 
However, in the Finsler case, there is no notion of volume that satisfies both properties, \cf~\cite{BI12}. This leads to two distinct notions of Finsler volume presented below.

Denote by $\varepsilon_{n}$ the Euclidean volume of the Euclidean unit ball in $\mathbb{R}^n$. 
Let $dx$ represent a given volume form on~$M$ and $m$ be the restriction of this volume form to each tangent space $T_{x}M$. Similarly, let $m^*$ be the restriction of the volume form dual to~$dx$ to each cotangent space $T^*_xM$. The \emph{Busemann volume}, \cf~\cite{Bu47}, is defined as
\begin{equation}\label{busemann}
\vol_{B}(M):=\int_{M}\frac{\varepsilon_{n}}{m(B_x)}dx.
\end{equation}
The Busemann volume is sometimes called the Busemann-Hausdorff volume as it agrees with the $n$-dimensional Hausdorff measure of $M$ (at least when the Finsler metric $F$ is reversible).
Another volume frequently used in Finsler geometry is the \emph{Holmes-Thompson volume}, \cf~\cite{HT79}, defined as 
\begin{equation}\label{holmesthompson}
\vol_{HT}(M):=\int_{M}\frac{m^*(B^{*}_{x})}{\varepsilon_{n}} \, dx.
\end{equation}
It is equal to the Liouville volume of its unit co-disc bundle divided by $\varepsilon_n$, \cf~\cite{AT04}. Note that the integrals in \eqref{busemann} and \eqref{holmesthompson} do not depend on the chosen volume form.
Sine the volume is a local notion, it is possible to extend this definition even when $M$ is nonorientable, that is, when volume forms do not exist on~$M$.

In \cite{Du98}, C. Dur\'an proved the following volume comparison inequality for Finsler manifolds: 
\begin{equation}\label{Du98}
\vol_{HT}(M)\leq \vol_{B}(M)
\end{equation}
with equality if and only if $M$ is Riemannian. Hence, every systolic inequality for the Holmes-Thompson volume remains true for the Busemann volume. However, the inequality \eqref{Du98} may fail for non-reversible Finsler metrics.

\section{A systolic inequality on Finsler two-tori} \label{sec:torus}
In this section we establish a Finsler version of the Minkowski second theorem for the two-torus. 
More precisely, L. Keen proved the following.

\begin{proposition}[\cite{Ke67}, \cite{Ka07} \S6.2] \label{Ke67}
Let $\T^2$ be a Riemannian two-torus. There exist two closed curves of lengths $a$ and $b$ generating the first integral homology group of $\T^2$ such that
\begin{equation*}
\vol_{HT}(\T^2)\geq\frac{\sqrt{3}}{2} ab.
\end{equation*}
Equality holds if and only if $\T^2$ is homothetic to the flat torus obtained as the quotient of $\mathbb{R}^2$ by a hexagonal lattice.   
\end{proposition}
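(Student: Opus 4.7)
The plan is to adapt the classical \emph{conformal length method} underlying the Riemannian proofs of the systolic inequalities on $\T^2$, $\RP^2$, $\K^2$ recalled in the introduction, and couple it with the elementary Minkowski-type bound on the two successive minima of a planar lattice.

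By the uniformization theorem, write $g=f^2 g_0$ for some flat metric $g_0$ on $\T^2$ and some smooth positive function $f$. Realize $(\T^2,g_0)$ as $\R^2/\Lambda$ for a lattice $\Lambda\subset\R^2$, and choose two $\mathbb{Z}$-linearly independent shortest lattice vectors $v_1,v_2\in\Lambda$, of Euclidean lengths $\alpha$ and $\beta$ and angle $\theta\in(0,\pi)$. A direct comparison of $v_2\pm v_1$ with $v_2$ forces $\theta\in[\pi/3,2\pi/3]$, hence $\sin\theta\geq\tfrac{\sqrt{3}}{2}$. Note that $\area(\T^2,g_0)=\alpha\beta\sin\theta$.

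For the $v_1$-direction, parametrize the family of $g_0$-closed geodesics parallel to $v_1$ by the transverse $g_0$-arclength $t\in[0,\beta\sin\theta]$; each such curve $\gamma_1^t$ has $g_0$-length $\alpha$ and $g$-length $\int_{\gamma_1^t}f\,ds$. Fubini gives
\[
\int_0^{\beta\sin\theta}\ell_g(\gamma_1^t)\,dt=\int_{\T^2} f\,d\area_{g_0},
\]
so some $\gamma_1^{t_*}$, call it $\gamma_a$, satisfies $a:=\ell_g(\gamma_a)\leq(\beta\sin\theta)^{-1}\int f\,d\area_{g_0}$. The same averaging in the $v_2$-direction produces a loop $\gamma_b$ of $g$-length $b\leq(\alpha\sin\theta)^{-1}\int f\,d\area_{g_0}$. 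The classes $[\gamma_a]=[v_1]$ and $[\gamma_b]=[v_2]$ form a $\mathbb{Z}$-basis of $H_1(\T^2;\mathbb{Z})$.

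Multiplying the two bounds and applying the Cauchy-Schwarz inequality to $\int f\cdot 1\,d\area_{g_0}$ yields
\[
ab\cdot\alpha\beta\sin^2\theta\leq\Bigl(\int_{\T^2}f\,d\area_{g_0}\Bigr)^{\!2}\leq\area(\T^2,g_0)\cdot\area(\T^2,g),
\]
and dividing by $\area(\T^2,g_0)=\alpha\beta\sin\theta$ gives $ab\sin\theta\leq\area(\T^2,g)$. Combined with $\sin\theta\geq\sqrt{3}/2$, this is the desired inequality. Equality forces Cauchy-Schwarz saturation (so $f$ is constant, i.e.\ $g$ is flat), sharpness of the averaging step (automatic once $f$ is constant), and $\theta=\pi/3$; an elementary lattice argument then forces $\alpha=\beta$, showing $\Lambda$ is hexagonal and $\T^2$ is homothetic to the hexagonal flat torus. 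The main subtlety is bookkeeping: one must combine the sharp angle bound on successive minima with simultaneous saturation of Cauchy-Schwarz and of both averaging steps, which is precisely what pins down the hexagonal torus as the unique extremizer.
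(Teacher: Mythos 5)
Your proof is correct, and it follows exactly the route the paper attributes to this result (which it cites from Keen and Katz rather than reproving): uniformization $g=f^2g_0$, averaging over the families of flat closed geodesics in the two classes realizing the successive minima, Cauchy--Schwarz, and the elementary bound $\sin\theta\geq\frac{\sqrt{3}}{2}$ for a reduced lattice basis, with the equality analysis forcing $f$ constant and the lattice hexagonal. No gaps beyond standard facts (e.g.\ that the two successive minima of a planar lattice form a $\mathbb{Z}$-basis), which are legitimately taken as known.
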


The proof of Proposition~\ref{Ke67} relies on the uniformization theorem and the Cauchy-Schwarz inequality.


\medskip

A Finsler version of Proposition \ref{Ke67} is given by the following result.

\begin{proposition}\label{Ke67finsler}
Let $\T^2$ be a Finsler two-torus. There exist two closed curves of lengths $a$ and $b$ generating the first integral homology group of $\T^2$ such that 
\begin{equation*}
\vol_{HT}(\T^2)\geq\frac{2}{\pi} ab.
\end{equation*}
Equality holds if $\T^2$ is homothetic to the quotient of $\mathbb{R}^2$, endowed with a parallelogram norm $||.||$, by a lattice generated by two vectors of lengths $a$ and $b$, parallel to the sides of the unit ball of $||.||$.   
\end{proposition}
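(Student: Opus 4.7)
The plan is to extract from the stable norm of $(\T^2,F)$ a good $\mathbb{Z}$-basis of $H_1(\T^2;\mathbb{Z})$ whose representatives realize the successive Minkowski minima, and then to derive the inequality by combining the Finsler systolic technology behind Theorem~\ref{Sa10} with a Minkowski--Mahler estimate on the flat model.

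First I would introduce the stable norm $\|\cdot\|_s$ on $H_1(\T^2;\R)\cong\R^2$ defined, for an integer class $\xi$, by $\|\xi\|_s=\lim_{k\to\infty}\frac{1}{k}\inf\{\ell(\gamma):[\gamma]=k\xi\}$. For Finsler two-tori the infimum is realized on every primitive integer class by a closed curve of length equal to that class's stable norm value. Let $\lambda_1\leq\lambda_2$ be the two successive Minkowski minima of the integer lattice $\Lambda=H_1(\T^2;\mathbb{Z})$ with respect to $\|\cdot\|_s$. In dimension two one can choose a $\mathbb{Z}$-basis $\{v_1,v_2\}$ of $\Lambda$ with $\|v_i\|_s=\lambda_i$; representing each $v_i$ by a shortest closed curve produces two loops $\alpha$ and $\beta$ of lengths $a=\lambda_1$ and $b=\lambda_2$ whose homology classes generate $H_1(\T^2;\mathbb{Z})$.

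The core step is the inequality $\vol_{HT}(\T^2,F)\geq\tfrac{2}{\pi}\lambda_1\lambda_2$. The idea is to reduce to the flat Finsler torus modelled on the stable norm via the filling-volume arguments of \cite{Sa10,BI02,Gr99}, yielding
\begin{equation*}
\vol_{HT}(\T^2,F)\;\geq\;\vol_{HT}\bigl(\R^2/\Lambda,\|\cdot\|_s\bigr)\;=\;\frac{|B_s^{\circ}|\cdot\mathrm{covol}(\Lambda)}{\pi},
\end{equation*}
where $B_s\subset\R^2$ is the unit ball of $\|\cdot\|_s$ and $B_s^{\circ}$ its polar body. The planar Mahler inequality gives $|B_s|\cdot|B_s^{\circ}|\geq 8$, with equality iff $B_s$ is a parallelogram; the upper bound in Minkowski's second theorem gives $|B_s|\cdot\lambda_1\lambda_2\leq 4\,\mathrm{covol}(\Lambda)$, with equality iff $\Lambda$ admits a basis parallel to the sides of $B_s$. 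Combining these two estimates,
\begin{equation*}
\vol_{HT}(\T^2,F)\;\geq\;\frac{8\,\mathrm{covol}(\Lambda)}{\pi\,|B_s|}\;\geq\;\frac{2\lambda_1\lambda_2}{\pi}\;=\;\frac{2ab}{\pi}.
\end{equation*}

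The equality case combines the Mahler and Minkowski-second equality conditions with the rigidity of the reduction step, producing exactly the parallelogram-norm torus described in the proposition (the lattice generators are parallel to the sides of the parallelogram unit ball, with norms $a$ and $b$). The main obstacle is the reduction step — the inequality $\vol_{HT}(\T^2,F)\geq\vol_{HT}(\R^2/\Lambda,\|\cdot\|_s)$ — which is the heart of the Finsler systolic method of \cite{Sa10} and relies on the Burago--Ivanov filling technology together with a careful analysis of how 1-cycles realize the stable norm; once granted, the remaining argument is classical planar convex geometry.
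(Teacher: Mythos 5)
Your argument is correct in outline but takes a genuinely different route from the paper. The paper keeps Theorem~\ref{Sa10} as a black box: it takes a systolic loop $\alpha$ and a shortest loop $\beta$ homologically independent of it, glues $pq$ copies of the fundamental domain so as to obtain a degree-$pq$ cover whose systole is $\min\{pa,qb\}$, applies Theorem~\ref{Sa10} to this cover, and lets $p,q\to\infty$; no stable norm, no Mahler inequality and no second Minkowski theorem enter. You instead reopen the proof of Theorem~\ref{Sa10}: you invoke the Burago--Ivanov reduction $\vol_{HT}(\T^2,F)\geq \frac{1}{\pi}\,|B_s^{\circ}|\,\mathrm{covol}(\Lambda)$ from \cite{BI02} (the engine inside \cite{Sa10}) and then replace Minkowski's first theorem, which only yields the $\sys^2$ bound, by the second theorem together with the planar Mahler inequality, which yields $\lambda_1\lambda_2$. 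The convex-geometry chain is correct, and your route has the merit of recovering Theorem~\ref{Sa10} as a by-product and of avoiding the covering construction (whose systole identification the paper asserts rather than details); the price is that it rests on the deeper filling/asymptotic-volume technology of \cite{BI02} rather than on the quoted statement of Theorem~\ref{Sa10}, so it is less elementary relative to the paper's toolkit.

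Two caveats. First, your assertion that on a Finsler two-torus the minimal length of a closed curve in a primitive integral class equals its stable norm is load-bearing, not cosmetic: without it you only obtain $\vol_{HT}(\T^2)\geq\frac{2}{\pi}\lambda_1\lambda_2$, while the lengths $a,b$ of the curves you actually produce satisfy $a\geq\lambda_1$, $b\geq\lambda_2$, and the desired bound $\vol_{HT}(\T^2)\geq\frac{2}{\pi}ab$ would not follow. The fact is true in dimension two (existence of periodic minimizers in every primitive class, Hedlund/Aubry--Mather-type results, which extend to reversible Finsler metrics), but it needs a proof or a precise reference; note that the paper's choice of $\alpha$ and $\beta$ sidesteps this issue entirely. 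Second, the equality assertion in Proposition~\ref{Ke67finsler} is only an ``if'': what must be checked is that the parallelogram-norm quotient attains $\frac{2}{\pi}ab$, a direct computation as in the paper; your proposed rigidity analysis of the equality cases of Mahler, Minkowski and the reduction step addresses the (unclaimed and substantially harder) converse, whose rigidity in the Burago--Ivanov step is far from automatic, and is not needed here.
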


Since there is no uniformization theorem for Finsler metrics, the proof of this proposition differs from the proof of Proposition \ref{Ke67}.

\begin{proof}
Let $\alpha$ be a systolic loop of~$\T^2$ and $\beta$ be the shortest closed curve of~$\T^2$ homologically independent with~$\alpha$.
Denote by~$a$ and~$b$ the lengths of~$\alpha$ and~$\beta$.
The loops~$\alpha$ and~$\beta$ are simple and intersect each other at a single point.
Cutting $\T^2$ open along~$\alpha$ and~$\beta$ gives rise to a surface~$\Delta$ isometric to a fundamental domain of~$\T^2$.
Let $L$ be a positive number greater than $max\{a,b\}$. 
Denote by $p$ and $q$ the smallest integers such that $pa\geq L$ and $qb\geq L$.  
Then, glue $pq$ copies of $\Delta$ in such a way that the resulting shape is isometric to the fundamental domain of a Finsler torus of volume $pq$ times the volume of $\T^2$ and of systole equal to $min\{pa,qb\}$. 
By construction, this new Finsler torus is a degree~$pq$ cover of $\T^2$. 
Then, by Theorem~\ref{Sa10}, we have
$$
pq \, \vol_{HT}(\T^2)\geq \frac{2}{\pi}\left(\min\{pa,qb\}\right)^{2}.
$$
Hence,
$$
\vol_{HT}(\T^2)\geq \frac{2}{\pi}\frac{L}{p}\frac{L}{q}\geq \frac{2}{\pi}\frac{p-1}{p}\frac{q-1}{q}ab.
$$
By choosing $L$ large enough, the integers $p$ and $q$ can be made arbitrarily large, which leads to the desired inequality.

Now, if $\T^2$ is the quotient of $\mathbb{R}^2$, endowed with a parallelogram norm, by a lattice generated by two vectors of lengths $a$ and $b$ which are parallel to the sides of the unit ball of the parallelogram norm, then $\vol_{HT}(\T^2)=\frac{2}{\pi}ab$.
\end{proof}

\begin{remark}
Briefly speaking, the idea of the proof of Proposition \ref{Ke67finsler} is to use finite covers to get a quasi-isosystolic two-torus (\ie, whose first homology group is generated by two loops of lengths nearly the systole) and to apply the systolic inequality of Theorem~\ref{Sa10} to this two-torus. 
This argument also applies in the Riemannian case and gives an alternative proof of Proposition \ref{Ke67} without the use of the uniformization theorem. 
\end{remark}

We can apply Proposition \ref{Ke67finsler} to prove a systolic inequality on Finsler cylinders. First, we give the following definition

\begin{definition}\label{heightdefinition}
Let $M$ be a compact Finsler surface with boundary. The \emph{height} $h(M)$ of $M$ is the minimal length of arcs with endpoints on the boundary $\partial M$, which are not homotopic to an arc in~$\partial M$. More formally, 
$$
h(M):=\inf\{\ell(\gamma)\vert \gamma:[0,1]\rightarrow M \mbox{ with } \gamma(0), \gamma(1) \in \partial M \text{ and } [\gamma]\neq 0\in \pi_1(M,\partial M)\}.
$$
A \emph{height arc} of~$M$ is a length-minimizing arc of~$\gamma$ of~$M$ with endpoints in~$\partial M$ inducing a nontrivial class in $\pi_1(M,\partial M)$.
By definition, the length of a height arc of~$M$ is equal to~$h(M)$.
\end{definition}

\begin{proposition}\label{finslercylindre}
Let $C$ be a Finsler cylinder. Then, 
$$
\vol(C)\geq\frac{2}{\pi} \, \sys(C)h(C).
$$
\end{proposition}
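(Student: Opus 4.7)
The plan is to double the Finsler cylinder $C$ along its boundary $\partial C$ to obtain a Finsler torus $T$ with $\vol(T)=2\vol(C)$, and then to apply Proposition~\ref{Ke67finsler}. This provides two closed curves $\alpha,\beta$ whose homology classes form a $\mathbb{Z}$-basis of $H_1(T;\mathbb{Z})$ and satisfy $\vol(T)\geq \frac{2}{\pi}\ell(\alpha)\,\ell(\beta)$; it thus suffices to establish $\ell(\alpha)\,\ell(\beta)\geq 2\sys(C)\,h(C)$.

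I would work in the adapted basis $e_1,e_2$ of $H_1(T)\cong\mathbb{Z}^2$, where $e_1$ is represented by a core circle of $C$ inside one of the two copies $C_1,C_2\subset T$, and $e_2$ by the concatenation of a height arc in $C_1$ with its mirror image in $C_2$. Writing $[\alpha]=a_1e_1+a_2e_2$ and $[\beta]=b_1e_1+b_2e_2$, the basis condition reads $a_1b_2-a_2b_1=\pm 1$. The core of the argument is to prove two weak length bounds for every loop $\gamma\subset T$ of homology class $(m,n)$: namely $\ell(\gamma)\geq \sys(C)$ whenever $m\neq 0$ and $\ell(\gamma)\geq 2h(C)$ whenever $n\neq 0$. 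For the first, I would use the natural folding map $\pi\colon T\to C$ which restricts to the identity on each of $C_1,C_2$; it is length-preserving, and its induced map on $H_1$ sends $e_1\mapsto 1$ and $e_2\mapsto 0$, so $\pi(\gamma)$ is a loop in $C$ of the same length as $\gamma$ and of nonzero winding number $m$, hence of length at least $\sys(C)$. For the second, I would look at the two gluing circles $S_1,S_2\subset T$; both represent $e_1$, so the algebraic intersection of $[\gamma]$ with each $S_i$ equals $\pm n$. A careful sign count of the boundary crossings shows that, within each copy $C_i$, the loop $\gamma$ must contain at least $|n|$ arcs whose two endpoints lie on distinct components of $\partial C_i$; since any such arc has length at least $h(C)$ by definition of the height, this gives $\ell(\gamma)\geq 2|n|h(C)$.

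To conclude, since $a_1b_2-a_2b_1=\pm 1$, at least one of the integers $a_1b_2,\,a_2b_1$ is nonzero. If $a_1b_2\neq 0$, then $a_1\neq 0$ and $b_2\neq 0$, so the bounds above applied to $\alpha$ and $\beta$ give $\ell(\alpha)\geq\sys(C)$ and $\ell(\beta)\geq 2h(C)$, whence $\ell(\alpha)\ell(\beta)\geq 2\sys(C)h(C)$. The case $a_2b_1\neq 0$ is symmetric. Combined with $\vol(T)\geq\frac{2}{\pi}\ell(\alpha)\ell(\beta)$ and $\vol(T)=2\vol(C)$, this produces the desired inequality. The main technical obstacle is the second length bound, where one has to turn the topological datum \emph{``algebraic intersection of $\gamma$ with $S_i$ equals $\pm n$''} into the geometric statement that at least $|n|$ arcs of $\gamma$ in each half genuinely run from one boundary component of $\partial C_i$ to the other, ruling out cancellation from ``back-and-forth'' arcs whose endpoints both lie on the same component.
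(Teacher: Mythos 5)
Your proof is correct and follows essentially the same route as the paper: both arguments glue copies of $C$ along their boundaries to form a closed Finsler torus and then invoke Proposition~\ref{Ke67finsler}. The only difference is in the bookkeeping — the paper glues $k$ copies with $k$ even and $k\,h(C)\geq \sys(C)$, so that the systole of the resulting torus equals $\sys(C)$ while every homologically independent curve has length at least $k\,h(C)$, whereas you work with the double ($k=2$) and bound the two generating curves directly via their homology classes (the folding map for nonzero horizontal degree, the signed crossing-arc count for nonzero vertical degree, and unimodularity of the basis), which is a valid variant of the same length estimates.
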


\begin{proof}
Let $k$ be a positive even integer. We glue $k$ copies of $C$ by identifiying the identical boundary components pairwise. 
The resulting space is a torus~$\T^2$.
Every loop of~$\T^2$ non freely homotopic to a multiple of a boundary component of~$C$ is of length at least $k \, h(C)$.
Therefore, for symmetry reasons, if $k$ satisfies $k \, h(C) \geq \sys(C)$, the systole of the torus~$\T^2$ is equal to the systole of the cylinder $C$. 
Applying Proposition~\ref{Ke67finsler} to this torus, we derive
\begin{equation*}
\vol(\T^2)=k \, \vol(C)\geq\frac{2}{\pi} \, k \, \sys(C)h(C).
\end{equation*}
Hence the result.
\end{proof}

We will make use of Proposition~\ref{finslercylindre} in the proof of Theorem~\ref{finslermobius} for wide Finsler Mobius bands, \cf~Section~\ref{sec:wide}.

\section{Natural candidates for extremal metrics} \label{sec:candidates}
In this section, we first review  the extremal Riemannian metrics for systolic inequalities on the Mobius band and the Klein bottle presented in \cite{Pu52, Bl62, Ba86, Ba88, Sak88}. By analogy with the Riemannian metrics, we construct Finsler metrics which are natural to consider when studying optimal Finsler systolic inequalities.

\medskip
Consider the standard sphere $S^2$. Denote by $u$ and $v$ the longitude and the latitude on $S^2$. 
Let $a\in (0,\frac{\pi}{2})$. 
The $a$-tubular neighborhood of the equator $\{v=0\}$ is a spherical band $S_a$ which can be represented as 
$$
S_a:=\{(u,v)\mid-\pi\leq u\leq \pi,-a\leq v\leq a\}.
$$
The quotient of $S_a$ by the antipodal map is a Riemannian Mobius band with curvature~$1$ denoted by 
$\M_a$. 
The conformal modulus space of the Mobius band is parametrized by $\M_a$ with $a\in (0,\frac{\pi}{2})$. 
More precisely, every conformal class on the Mobius band agrees with the conformal structure induced by some $\M_a$ with $a\in(0,\frac{\pi}{2})$. 
Furthermore, the conformal classes of the $\M_a$'s are pairwise distinct.

\medskip
The spherical Mobius bands $\M_a$ are involved in several extremal conformal systolic inequalities for Riemannian metrics. More precisely, we define the \emph{orientable systole} of a Riemannian Mobius band $\M$ as the shortest length of a noncontractible orientable loop in $\M$. It will be denoted by $\sys_{+}(\M)$. Similarly, we define the \emph{nonorientable systole} of $\M$ and denote it by $\sys_{-}(\M)$. 
Observe that $\sys(\M)=\min\left\{\sys_{+}(\M),\sys_{-}(\M)\right\}$. 
Moreover, we define $\ell_v(\M)$ as the minimal length of the arcs joining $\left(u,-a\right)$ to $\left(u,a\right)$ in the $\left(u,v\right)$-coordinates of $\M_a$, which are homotopic with fixed endpoints to the projections of the meridians in~$S_a$.
For instance, $\sys_{+}(\M_a)=2\pi\cos(a)$, $\sys_{-}(\M_a)=\pi$ and $\ell_v(\M_a)=2a$. 
Note that the definition of~$\ell_v$ relies on conformal data, namely the longitude-latitude coordinates to define the endpoints of the arcs involved in the length minimization.

\medskip
In \cite{Bl62}, C. Blatter obtained optimal lower bounds for the functionals 
$$
\sigma_1:=\frac{\vol}{\sys_{-}^2},\quad \sigma_2:=\frac{\vol}{\sys_{-} \times \ell_v} \quad \text{and} \quad  \sigma_3:=\frac{\vol}{\sys \times \ell_v}
$$ 
in each conformal class of the Mobius band. More precisely, for every Riemannian metric conformally equivalent to $\M_a$, we have the sharp lower bound
\begin{equation}\label{sigma1}
\sigma_1(\M)\geq \sigma_1(\M_a).
\end{equation}
We also have the sharp inequality 
\begin{equation}\label{sigma2}
\sigma_2(\M)\geq
\begin{cases}
\sigma_2(\M_a)&  \text{if } a \in (0,b] \\
\sigma_2(\M_{\alpha(a)}\cup C_{a,\alpha(a)})&  \text{if } a \in [b,\frac{\pi}{2})
\end{cases}
\end{equation}
where $b$ is the unique solution in $(0,\frac{\pi}{2})$ of the equation $\tan(x)=2x$ and $\M_{\alpha(a)}\cup C_{a, \alpha(a)}$ is the Mobius band obtained by attaching a flat cylinder $C_{a,\alpha(a)}$ to the spherical Mobius band $\M_{a,\alpha(a)}$ along their boundary.
Here, the angle $\alpha(a)\in[b, a]$ is implicitly given by a nonlinear equation depending on the conformal type $a$ and the flat cylinder $C_{a, \alpha(a)}$ is defined as the product $\partial_+ S_\alpha(a) \times [0,\sin a - \sin \alpha(a) ]$, where $\partial_+ S_{\alpha(a)}$ is a boundary component of~$S_\alpha(a)$.
Alternately, $C_{a, \alpha(a)}$ is the Mercator projection of a connected component of $S_a\setminus S_{\alpha(a)}$ to the vertical cylinder generated by~$\partial S_{\alpha(a)}$. \\
Finally, we have the third sharp inequality 
\begin{equation}\label{sigma3}
\sigma_3(\M)\geq
\begin{cases}
\sigma_3(\M_a)&  \text{if } a\in(0,\frac{\pi}{3}]\\
\sigma_3(\M_{\frac{\pi}{3}}\cup C_{a,\frac{\pi}{3}})& \text{if } a\in[\frac{\pi}{3},\frac{\pi}{2})
\end{cases}
\end{equation}

With the help of~\eqref{sigma1}, C.~Bavard~\cite{Ba86} established the optimal isosystolic inequality on the Klein bottle. 
Later, T.~Sakai~\cite{Sak88} used the inequalities~\eqref{sigma2} and~\eqref{sigma3} to give an alternative proof of Bavard's isosystolic inequality for the Klein bottle. 
The extremal Riemannian metric on the Klein bottle is obtained by gluing two copies of the spherical Mobius band~$\M_{\frac{\pi}{4}}$ along their boundary.

\medskip
The closed geodesics in $S_a$ project down to systolic loops in $\M_a$ which differ only by rotations and can be described as follows. Let $\gamma^0_0$ be the equator $\{v=0\}$ of $S^{2}$ parametrized by arclength. 
Every great circle $\gamma$ in~$S_a$ different from~$\gamma^0_0$ intersects~$\gamma^0_0$ at a unique point~$\gamma^0_0(s)$ with $s\in [0,\pi)$. 
The great circles $\gamma^0_0$ and $\gamma$ form at $\gamma^0_0(s)$ an oriented angle $\vartheta\in [-\frac{\pi}{2},\frac{\pi}{2}]$ with $\vartheta \neq 0$. 
Such great circle $\gamma$ is denoted by $\gamma^s_\vartheta$. 
From the sinus formula in spherical trigonometry, the great circle $\gamma^s_\vartheta$ exactly lies between the circles of latitude $\pm \vartheta$. 
Thus, $\vartheta\in [-a,a]$. 

For $\vartheta>0$, let $v \in(0,\vartheta]$. 
By Clairaut's relation, \cf~\cite[\S4.4]{DoC76}, the positive angle $\theta_{\vartheta}$ between the circle of latitude~$v$ and the great circle $\gamma^s_\vartheta$ satisfies $\theta_{\vartheta}(v)=\arccos\left(\frac{\cos(\vartheta)}{\cos(v)}\right)$. 
In particular, the unit tangent vectors to the systolic loops of $\M_a$ at a point of latitude~$v$ generate a symmetric cone of half angle 
\[
\theta(v)=\arccos\left(\frac{\cos(a)}{\cos(v)}\right)
\] 
in the tangent plane, \cf~Figure~\ref{extremalriemannian}. 
These unit tangent vectors of~$\M_a$ are referred to as \emph{systolic directions}. 
The unit tangent vectors to the meridians are called \emph{meridian directions}. 
Despite the risk of confusion, we will also call great circles of~$\M_a$ the projections of the great circles of~$S_a$ to~$\M_a$.


\begin{figure}[htbp]
\includegraphics[width=3cm]{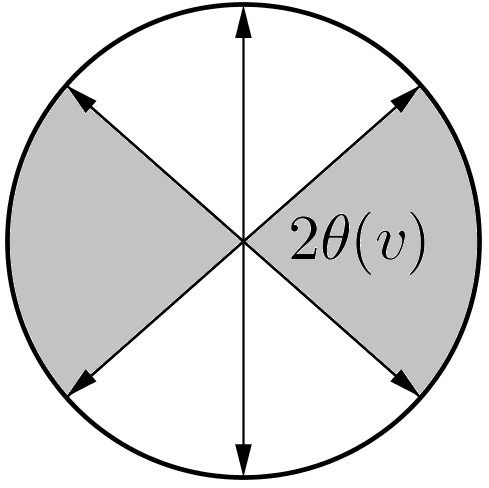}
\caption[extremalriemannian]{Systolic (in gray) and meridian directions in the unit tangent plane at a point of latitude~$v$ in $\M_a$.} \label{extremalriemannian}
\end{figure}

\medskip
The spherical Mobius band $\M_a$, which is extremal for some conformal systolic inequalities among Riemannian metrics, \cf~\eqref{sigma1}, \eqref{sigma2}, and \eqref{sigma3}, is not extremal among Finsler metrics. 
Indeed, by slightly perturbing the quadratically convex norm in each tangent plane away from the systolic and meridian directions of the spherical metric, \cf~Figure~\ref{extremalriemannian}, we can decrease the area of the Mobius band without changing the systole and the height. 
This shows that any unit tangent vector to an extremal (quadratically convex) Finsler Mobius band is tangent either to a systolic loop or a height arc, \cf~Definition~\ref{heightdefinition}.
In other words, the unit tangent vectors induced by the systolic loops and the height arcs of an extremal (quadratically convex) Finsler Mobius band fill in its unit tangent bundle.

\medskip

With this observation in mind, it is natural to consider the following (non-quadratically convex) Finsler metrics as potential extremal metrics. 
The idea is to adjust the shapes of the unit balls in the tangent bundle of the Mobius band so that the systolic and meridian directions  fill in the unit tangent bundle. More precisely, define a Finsler metric $F_a$ on $S_a$ whose restriction to each tangent plane $T_{x}S_a$ is a norm $F_a|_{x}$ of unit ball $B_{x}$ given by the convex hull of the systolic directions of $\M_a$, \cf~Figure~\ref{sabourauextremal}. 
In longitude and latitude coordinates, the ball $B_{x}$ at $x=(u,v)$ can be represented as
$$
B_{x}:=\{(\xi_u,\xi_v)\in T_{x}S_a\mid \xi_u^2+\xi_v^2\leq 1, |\xi_v|\leq \sin \theta(v) \}.
$$
Hence, the Finsler metric $F_a$ can be represented in local coordinates as
\begin{equation*}
F_a =
\begin{cases}
 \frac{1}{\sin(\theta(v))} \, |dv| &  \text{if } \arctan\left(\frac{dv}{du}\right) \in [0,\theta(v)] \\
\sqrt{du^2+dv^2}&  \text{if } \arctan\left(\frac{dv}{du}\right) \in [\theta(v),\frac{\pi}{2}]
\end{cases}
\end{equation*}
This metric passes to the quotient by the antipodal map to a Finsler metric still denoted by $F_a$. Denote by $\M_{F_a}$ the Finsler Mobius band so obtained. 
\begin{figure}[htbp]
\includegraphics[width=3cm]{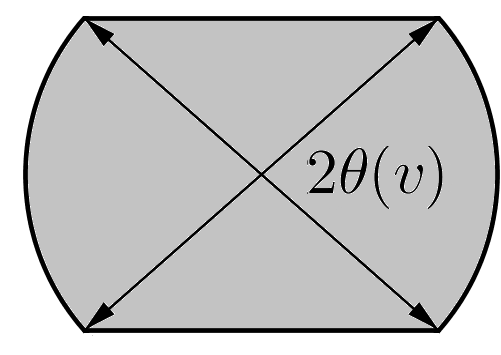}
\caption[sabourauextremal]{Unit ball of $F_{a}$ at a point of latitude $v$ in $S_a$.} \label{sabourauextremal}
\end{figure}

Since the definition of~$\ell_v$ relies on conformal data which do not extend to the Finsler case, it is more natural to consider the height $h(\M)$, \cf~Definition~\ref{heightdefinition}, in the Finsler case. 

\medskip


Some geometric features of the Finsler Mobius bands $\M_{F_a}$ are summarized in the following two propositions.  

\begin{proposition}\label{height}
Let $a\in(0,\frac{\pi}{2})$. Then, $\sys_{-}(\M_{F_a})=\pi$, $\sys_{+}(\M_{F_a})=2\pi\cos(a)$ and $h(\M_{F_a})=\pi$. 

In particular, if $a=\frac{\pi}{3}$ then $\sys(\M_{F_a})=\sys_{+}(\M_{F_a})=\sys_{-}(\M_{F_a})=h(\M_{F_a})=\pi$.  
\end{proposition}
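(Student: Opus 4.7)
The plan is to prove each of the three equalities by obtaining matching upper and lower bounds. For the upper bounds I would exhibit explicit curves: the equator $\{v=0\}$ projects to a nonorientable loop of $F_a$-length $\pi$; the boundary circle $\{v=a\}$ is an orientable loop of $F_a$-length $2\pi\cos a$; and a meridian $\{u=u_0\}$ serves as a height arc. The lower bounds will come from lifting the relevant objects to the orientation double cover $S_a$ and applying two pointwise estimates on the Finsler norm, both read off directly from the definition of the unit ball $B_x$: namely, (a) $F_a \ge F_{\mathrm{sph}}$, where $F_{\mathrm{sph}}$ denotes the underlying spherical Riemannian norm, which follows from $B_x \subset$ spherical unit disk, and (b) $F_a(\xi) \ge |\xi_v|/\sin\theta(v)$, which follows from $B_x$ being contained in the horizontal strip of half-width $\sin\theta(v)$.

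With (a) and (b) in hand, each lower bound becomes routine. For $\sys_{-}$, I would observe that a nonorientable loop lifts to an arc in $S_a$ joining two antipodal points of $S^2$, so by (a) its $F_a$-length is at least the spherical distance $\pi$. For $\sys_{+}$, an orientable loop lifts to a closed curve of nonzero winding number $k$, and combining (a) with the bound $F_{\mathrm{sph}}(\dot\gamma) \ge \cos(v)\,|\dot u| \ge \cos(a)\,|\dot u|$ gives, after integrating, length $\ge 2\pi|k|\cos a \ge 2\pi\cos a$. For $h$, a nontrivial arc of $\pi_1(\M, \partial\M)$ lifts to an arc in $S_a$ with one endpoint on each of the two boundary components $\{v=\pm a\}$, so its $v$-coordinate sweeps all of $[-a,a]$; the bound (b) together with the coarea formula then yields length $\ge \int_{-a}^{a} dv/\sin\theta(v)$.

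The main obstacle, and what makes everything fit together, is evaluating $\int_{-a}^{a} dv/\sin\theta(v) = \pi$ independently of $a$. Using the explicit form $\sin\theta(v) = \sqrt{\cos^2 v - \cos^2 a}/\cos v$ obtained from Clairaut's formula in the excerpt, the substitution $\sin v = \sin a\,\sin\phi$ transforms the integrand into $d\phi$, producing $\int_{-\pi/2}^{\pi/2} d\phi = \pi$. The same computation then also shows that the explicit meridian $\{u = u_0\}$ realizes the matching upper bound $h(\M_{F_a}) \le \pi$. One preliminary verification, best dispatched before running the three calibration arguments, is that the deck transformation on $S_a$ is the antipodal involution $(u,v) \mapsto (u+\pi, -v)$, so that nonorientable loops indeed correspond to arcs joining antipodal points and nontrivial boundary-to-boundary arcs correspond to lifts joining $\{v=a\}$ to $\{v=-a\}$.
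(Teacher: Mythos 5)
Your proposal is correct, but it takes a genuinely different route from the paper's. The paper argues dynamically: it observes that $F_a$ coincides with the round metric on the set $\mathcal{S}\subset U\M_{F_a}$ of directions tangent to great circles, that $\mathcal{S}$ is invariant under the geodesic flow, and then runs a case analysis on a length-minimizing noncontractible loop (it lies in $\partial\M_{F_a}$, giving length $2\pi\cos a$; or some tangent vector lies in $\mathcal{S}$, so it is a great circle of length $\pi$ or a doubled great circle of length $2\pi$; or its tangent vectors avoid $\mathcal{S}$, in which case a subarc must cross the band between the two extreme great circles $c_\pm$ through a point of the loop, forcing length at least $h(\M_{F_a})$), while the height is computed by showing, via the rotational symmetry, that a height arc can be parametrized by latitude and then evaluating $\int_{-a}^{a}\frac{dv}{\sin\theta(v)}=\pi$. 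You replace all of this by the two pointwise comparisons $F_a\ge F_{\mathrm{sph}}$ and $F_a(\xi)\ge |\xi_v|/\sin\theta(v)$ (both immediate from $B_x$ being the convex hull of the systolic directions) combined with covering-space topology: antipodal endpoints of lifts for nonorientable loops, winding number for orientable loops, and boundary-component-switching lifts plus the coarea inequality for the height, which also spares you the paper's monotone-reparametrization step. Your route has the advantage of never invoking the geodesic flow of $F_a$, a delicate object here since $F_a$ is neither smooth nor quadratically convex and degenerates along $\partial S_a$, and it yields the stronger bound $2\pi|k|\cos a$ for orientable loops of class $2k$; the paper's route, in exchange, identifies the systolic curves explicitly (great circles and the boundary), which feeds into the extremality discussion elsewhere but is not needed for the statement. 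The only details to write out in your version are the routine verification that a relative-homotopically nontrivial arc lifts to an arc joining the two components of $\partial S_a$ (use the retraction of $S_a$ onto one boundary circle and project the homotopy) and the substitution $\sin v=\sin a\,\sin\phi$ giving $\int_{-a}^{a}\frac{\cos v\,dv}{\sqrt{\cos^2 v-\cos^2 a}}=\pi$, both of which you already indicate.
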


\begin{proof}
Let us start with a useful observation.
Denote by~$\mathcal{S}$ the interior of the domain of~$U\M_{F_a}$ formed by the unit tangent vectors of the great circles of~$\M_a$.
The Finsler metric~$F_a$ coincides with the round Riemannian metric of~$\M_a$ on~$\mathcal{S}$.
Therefore, the subset~$\mathcal{S}$ is stable under the geodesic flow of~$F_a$ (which is well-defined on~$\mathcal{S}$).
Furthermore, the length of a great circle with respect to~$F_a$ is equal to~$\pi$.

\medskip

Let us show that $h(\M_{F_a})=\pi$.
Consider a height arc~$\gamma$ of~$\M_{F_a}$.
%
The arc~$\gamma$ can be parametrized with respect to the latitude.
Otherwise, we could remove a subarc of~$\gamma$ joining two points at the same latitude and still make up an arc in the same relative homotopy class as~$\gamma$ with the remaining pieces using the rotational symmetry of~$\M_{F_a}$.
This would contradict the length-minimizing property of~$\gamma$.
Hence,
\begin{align*}
h(\M_{F_a})
&=\ell(\gamma)\\
&=\int_{-a}^a \frac{1}{\sin \theta(v)} \, dv\\
&=\int_{-a}^a\frac{\cos(v)}{\sqrt{\cos^{2}(v)-\cos^{2}(a)}} \, dv \\
&=2\arctan\left(\frac{\sqrt{2}\sin(v)}{\sqrt{\cos(2v)-\cos(2a)}}\right)\bigg|_{0}^a\\
&=\pi.
\end{align*}  

Now, let us show that the systolic curves of~$\M_{F_a}$ agree with the great circles of~$\M_a$ in the nonorientable case and with the boundary of~$\M_a$ in the orientable case.
Consider an orientable or nonorientable noncontractible loop~$\gamma$ of minimal length in~$\M_{F_a}$.

If $\gamma$ lies in the boundary of~$\M_{F_a}$ then the loop~$\gamma$ is orientable of length $2\pi \cos(a)$.
Thus, we can assume that $\gamma$ passes through an interior point~$p$ of~$\M_{F_a}$.

If a tangent vector of~$\gamma$ lies in~$\mathcal{S}$ then the geodesic arc~$\gamma$ coincides with a great circle of~$\M_a$ in the nonorientable case and with a great circle run twice in the orientable case. 
(Recall that $\mathcal{S}$ is stable by the geodesic flow of~$F_a$.)
In the former case, the curve~$\gamma$ is of length~$\pi$, while in the latter, it is of length~$2\pi$.
Thus, we can assume that the tangent vectors of~$\gamma$ do not lie in~$\mathcal{S}$.

Consider the closed lift~$\bar{\gamma}$ of~$\gamma$ in~$S_a$.
Let~$c_\pm$ be the two extreme great circles of~$S_a$ passing through the lifts of~$p$ and tangent to the boundary of~$S_a$.
That is, $c_\pm$ are the great circles of~$S_a$ making an angle of~$\pm \theta(v)$ with the curves of constant latitude~$\pm v$ in~$S_a$ passing through the lifts of~$p$.
Since the tangent vectors of~$\gamma$ do not lie in~$\mathcal{S}$, the curve~$\bar{\gamma}$ does not intersect~$c_\pm$ in the interior of~$S_a$, except at the lifts of~$p$.
Therefore, there exists a subarc of~$\bar{\gamma}$ (actually two subarcs of~$\bar{\gamma}$) joining the two boundary components of~$S_a$ in the region delimited by the great circles~$c_\pm$ and the boundary of~$S_a$,  see the gray region of Figure~\ref{propositionheight}.
Thus, $\ell(\gamma) \geq h(\M_{F_a}) = \pi$ with equality if $\gamma$ agrees with~$c_\pm$.

We conclude that $\sys_{-}(\M_{F_a})=\pi$ and $\sys_{+}(\M_{F_a})=\ell(\partial\M_{F_a})=2\pi\cos(a)$. Hence,
\begin{equation*}
\sys(\M_{F_a}) =
\begin{cases}
 \sys_{-}(\M_{F_a})&  \mbox{if } a \in (0,\frac{\pi}{3}] \\
 \sys_{+}(\M_{F_a})& \mbox{if } a \in [\frac{\pi}{3},\frac{\pi}{2})
\end{cases}
\end{equation*}
\end{proof}

\forget

\begin{proof}
We claim that the systolic curves are only the great circles. Observe first that the set of unit vectors in $U\M_{F_a}$ that are pointing in the directions of great circles is stable under the geodesic flow, \cf~\cite{Be78}, Chapter 1.F. Moreover, it is easy to see that the length of a great circle is equal to $\pi$. Now, let $\gamma$ be a geodesic loop in $\M_{F_a}$ that lifts to an arc joinning the point $(-\frac{\pi}{2},-b)$ to the point $(\frac{\pi}{2},b)$ in $S_a$, where $b\in[0,a]$, pointing in a meridian direction. Let $\gamma_{0}$ be a geodesic loop in $\M_{F_a}$ that lifts to the extreme great circle $\gamma_{a}^{s}$ in $S_a$ (that is, the angle between the meridian $\left\{v=\alpha\right\}$ and $\gamma_{a}^{s}$ is equal to $\arccos\left(\frac{\cos(a)}{\cos(\alpha)}\right)$, $\alpha\in[-b,b]$). By a calibration argument we have $\ell(\gamma)\geq\ell(\gamma_{0})=\pi$, with equality if and only if $\gamma$ coincides with $\gamma_{a}^{s}$. In fact, assume that the geodesics loops $\gamma$ and $\gamma_{0}$ are parametrized by the latitude $v$, then
\begin{align*}
\int^{b}_{-b}du|_{\gamma(v)}(\dot{\gamma}(v))\mathrm{d}v&\geq\int^{b}_{-b}du|_{\gamma_{0}(v)}(\dot{\gamma_{0}}(v))\mathrm{d}v.
\end{align*}
Hence, $\sys_{-}(\M_{F_a})=\pi$. Moreover, it is not hard to show that $\sys_{+}(\M_{F_a})=\ell(\partial\M_{F_a})=2\pi\cos(a)$. Hence,
\begin{equation*}
\sys(\M_{F_a}) =
\begin{cases}
 \sys_{-}(\M_{F_a})&  \text{if } a \in ]0,\frac{\pi}{3}] \\
 \sys_{+}(\M_{F_a})&  \text{if } a \in [\frac{\pi}{3},\frac{\pi}{2}[.
\end{cases}
\end{equation*}

Finally, let $\gamma$ be a unit speed (non closed) geodesic arc pointing in a meridian direction, and of length $h(\M_a)$. Then $\gamma$ joins two points of $\partial\M_{F_a}$ and induces a nontrivial class in $\pi_1(\M_{F_a},\partial\M_{F_a})$ s.  We claim that its length is in fact equal to $\pi$. Indeed, we have
\begin{align*}
h(\M_{F_a})
&=\ell(\gamma)\\
&=2\int_{0}^a \frac{1}{\sin(\theta)}\mathrm{d}v\\
&=2\int_{0}^a\frac{\cos(v)}{\sqrt{\cos^{2}(v)-\cos^{2}(a)}}\mathrm{d}v\\
&=2\arctan\left(\frac{\sqrt{2}\sin(v)}{\sqrt{\cos(2v)-\cos(2a)}}\right)\bigg|_{0}^a\\
&=\pi.
\end{align*}  
\end{proof}

\forgotten
\begin{figure}[htbp]
\includegraphics[width=5cm]{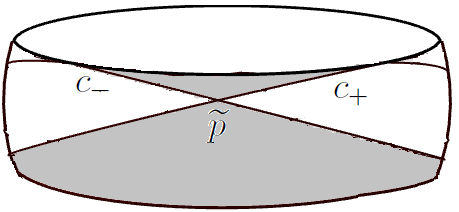}
\caption[uu]{The extreme great circles $c_\pm$ passing through a lift $\widetilde{p}$ of~$p$ in~$S_a$.} \label{propositionheight}
\end{figure}

\begin{remark}
The Finsler Mobius bands $\M_{F_a}$ are not pairwise isometric since they have distinct orientable systoles. 
\end{remark}

\begin{proposition} \label{volume}
Let $a\in(0,\frac{\pi}{2})$. Then, $\vol_{HT}(\M_{F_a})=2\pi.$ 
\end{proposition}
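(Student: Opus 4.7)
The plan is a direct calculation from the definition: pointwise identify the dual unit ball $B_x^*$, evaluate its area, and integrate. It is convenient to describe $B_x$ in the orthonormal frame $(\sec v\,\partial_u,\partial_v)$ of the round metric on $S_a$, in which the formula $\{\xi_u^2+\xi_v^2\le 1,\ |\xi_v|\le\sin\theta(v)\}$ exhibits $B_x$ as the intersection of the round unit ball with a horizontal strip.

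First I would identify the polar body. The polarity rule $(A\cap B)^\circ = \operatorname{conv}(A^\circ\cup B^\circ)$ shows that, back in the $(u,v)$-coordinate frame on $T^*_xS_a$, $B_x^*$ is the convex hull of the polar ellipse with semi-axes $(\cos v,1)$ and the two apex points $(0,\pm 1/\sin\theta(v))$. After the rescaling $\eta_u\mapsto\eta_u/\cos v$ this becomes the ``lens'' bounded by the unit circle and four tangent segments from those apices; splitting it into a truncated disk plus two triangular spikes gives the area $2\theta+2\cot\theta$, and reinstating the Jacobian yields $m^*(B_x^*) = \cos v\bigl(2\theta(v)+2\cot\theta(v)\bigr)$.

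Taking $[0,\pi]\times[-a,a]$ as a fundamental domain for the antipodal quotient and integrating out $u$, the definition of the Holmes-Thompson volume reduces to
\[
\vol_{HT}(\M_{F_a}) \;=\; 4\int_0^a \cos v\,\bigl(\theta(v)+\cot\theta(v)\bigr)\,dv .
\]
I would split this into a $\cot\theta$ piece and a $\theta$ piece. Using $\cot\theta(v) = \cos a/\sqrt{\cos^2v-\cos^2a}$ together with the identity $\cos^2v-\cos^2a = \sin^2a-\sin^2v$, the substitution $w=\sin v$ recognizes the first integrand as a derivative of $\arcsin(w/\sin a)$, so that $\int_0^a \cos v\cot\theta\,dv = \tfrac{\pi}{2}\cos a$.

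For the $\theta$ integral I would integrate by parts with $\sin v$ as the primitive of $\cos v$. The boundary contributions vanish since $\theta(a)=0$ and $\sin 0 = 0$; using $\theta'(v) = -\cos a\,\tan v/\sqrt{\cos^2v-\cos^2a}$ and the algebraic identity $\sin^2v/\cos v = \sec v-\cos v$, the integral reduces to $\cos a\bigl(I_1-\tfrac{\pi}{2}\bigr)$ with $I_1 = \int_0^a dv/(\cos v\sqrt{\cos^2v-\cos^2a})$. The successive substitutions $w=\sin v$ and then $w=\sin a\sin\phi$ convert $I_1$ into $\int_0^{\pi/2} d\phi/(1-\sin^2a\sin^2\phi)$, which the tangent substitution $t=\tan\phi$ evaluates in closed form to $\pi/(2\cos a)$. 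Assembling the pieces, $\int_0^a \cos v(\theta+\cot\theta)\,dv = \tfrac{\pi(1-\cos a)}{2}+\tfrac{\pi\cos a}{2} = \tfrac{\pi}{2}$, hence $\vol_{HT}(\M_{F_a})=2\pi$. The only nonroutine step is the reduction of $I_1$ — which looks a priori like an elliptic integral of the third kind — to an elementary integral via the tangent substitution; this cancellation is precisely what makes the volume independent of $a$.
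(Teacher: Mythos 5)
Your proposal is correct and follows essentially the same route as the paper: identify the polar body $B_x^*$ pointwise, compute its area as $2\theta(v)+2\cot\theta(v)$, and integrate this against the area element $\cos v\,du\,dv$ over a fundamental domain of the antipodal quotient. The only difference is one of detail — you make explicit the coordinate-versus-orthonormal-frame bookkeeping and the evaluation of $\int_0^a(\theta+\cot\theta)\cos v\,dv=\tfrac{\pi}{2}$, which the paper asserts without computation — and both steps check out.
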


\begin{proof}
The unit ball $B_x^{*}$ coincides with the polar body of $B_x$ described in Figure~\ref{dualsabourauextremal}. 
The area of $B_x^{*}$ is equal to $2\theta(v)+\frac{2}{\tan \theta(v)}$. 
By definition of the Holmes-Thompson volume, \cf~\eqref{holmesthompson}, we have
\begin{align*}
\vol_{HT}(\M_{F_a})
&=\frac{1}{2\pi}\int_{S_{a}} m(B^{*}_{x})\, dm(x)\\
&=\frac{2}{\pi}\int^{\frac{\pi}{2}}_{-\frac{\pi}{2}}\int^{a}_{-a}\left( \theta(v)+\frac{1}{\tan\theta(v)}\right)\cos(v) \, du \, dv \\
&=\frac{2}{\pi}\int^{\frac{\pi}{2}}_{-\frac{\pi}{2}}\int^{a}_{-a}\left(\arccos\left(\frac{\cos(a)}{\cos(v)}\right)+\frac{\cos(a)}{\sqrt{\cos^2(v)-\cos^2(a)}}\right)\cos(v) \, du \, dv \\
&=2\pi.
\end{align*} 
\end{proof}

\begin{figure}[htbp]
\includegraphics[width=3cm]{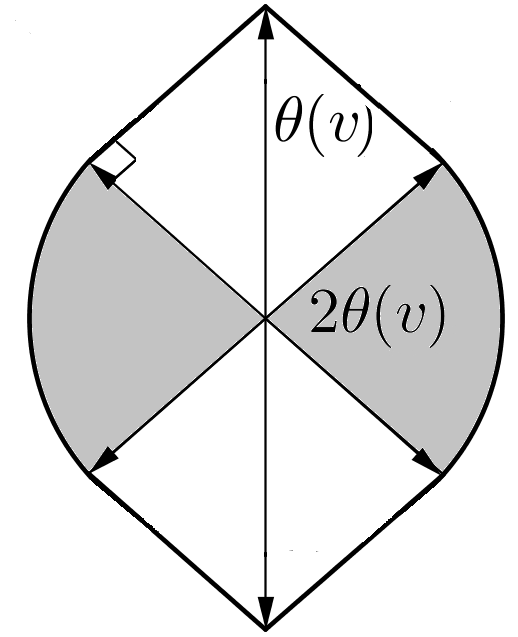}
\caption[dualsabourauextremal]{Dual unit ball $B_x^{*}$ or polar body of~$B_x$.} \label{dualsabourauextremal}
\end{figure}

\begin{remark}
As a consequence of Propositions \ref{height} and \ref{volume}, we observe the following couple of points.
\begin{enumerate}
\item The orientable and nonorientable systoles of~$F_a$ agree with those of its dual $F_a^{*}$. Hence, $\sys(\M_{F_a})=\sys(\M_{F_a^*})$. 
Moreover, computations similar to those in Propositions~\ref{height} and~\ref{volume} show that  $h(\M_{F_a^*})=\pi(1-\cos(a))$ and $\vol_{HT}(\M_{F_{a}^*})=2\pi\sin^2(a)$. 
This means that for both $F_a$ and its dual $F_a^{*}$, we have $\frac{\vol_{HT}(\M)}{\sys_{-}(\M) \, h(\M)}\rightarrow\frac{2}{\pi}$, when $a\rightarrow\frac{\pi}{2}$. 

\medskip

\item The Finsler Mobius bands $\M_{F_{a}}$ with $a\in (0,\frac{\pi}{3}]$ attain the equality case in~\eqref{FM} when $\sys(\M)=h(\M)$.
\end{enumerate}
\end{remark}

\section{Systolic inequalities on wide Finsler Mobius bands} \label{sec:wide}
In this section, we give a proof of Theorem~\ref{finslermobius} for wide Finsler Mobius bands, that is, when $\lambda \geq 1$.
More precisely, we prove the following result.

\begin{proposition} \label{prop:wide}
Let $\M$ be a Finsler Mobius band with $h(\M) \geq \sys(\M)$.
Then
\begin{equation} \label{eq:wide}
\vol_{HT}(\M) \geq \frac{1}{\pi} \, \sys(\M) \, (\sys(\M) + h(\M)).
\end{equation}
\end{proposition}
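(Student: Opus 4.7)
The plan is to pass to the orientation double cover of $\M$, which is a Finsler cylinder, and exploit the extra geometric structure coming from the deck involution together with Proposition~\ref{finslercylindre}.

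Concretely, I would first set up the orientation double cover $\pi\colon \tilde\M\to\M$, which is a Finsler cylinder with $\vol_{HT}(\tilde\M)=2\vol_{HT}(\M)$. The deck transformation $\tau\colon\tilde\M\to\tilde\M$ is a fixed-point-free orientation-reversing involution swapping the two boundary components. Two identifications are essential: (a)~$\sys(\tilde\M) = \sys_+(\M)$, the length of the shortest orientation-preserving non-contractible loop of $\M$, since non-contractible loops of $\tilde\M$ correspond bijectively to orientation-preserving non-contractible loops of $\M$; (b)~$h(\tilde\M)=h(\M)$. The inequality $h(\tilde\M)\leq h(\M)$ follows by lifting a shortest height arc of $\M$ to an arc between the two boundary components of $\tilde\M$. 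For the reverse inequality, a shortest arc between the two boundary components of $\tilde\M$ projects to a curve in $\M$ with endpoints in $\partial\M$ that cannot be relatively homotopic to an arc in $\partial\M$: such a homotopy would lift to a path in the disconnected set $\partial\tilde\M$ with endpoints in distinct components, which is impossible.

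Next I would apply Proposition~\ref{finslercylindre} to $\tilde\M$ to obtain
\[
\vol_{HT}(\M) \;=\; \tfrac{1}{2}\vol_{HT}(\tilde\M) \;\geq\; \tfrac{1}{\pi}\,\sys_+(\M)\,h(\M).
\]
This yields the desired inequality whenever $\sys_+(\M)\,h(\M) \geq \sys(\M)(\sys(\M)+h(\M))$, in particular when the systole of $\M$ is realized by an orientation-preserving loop that is sufficiently long compared to $\sys(\M)$. To cover the remaining cases, I would supplement with a second cylinder inequality: cut $\M$ along a shortest orientation-reversing loop $\alpha$ of length $\sys_-(\M)$ to produce a Finsler cylinder $C$ with $\vol_{HT}(C)=\vol_{HT}(\M)$ whose boundary components have lengths $\ell(\partial\M)$ and $2\sys_-(\M)$. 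Apply Proposition~\ref{finslercylindre} to $C$: its systole is at least $\sys_+(\M)$ (non-contractible loops of $C$ avoid $\alpha$ and are thus orientation-preserving in $\M$), while its height equals the distance in $\M$ from $\partial\M$ to $\alpha$.

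The main obstacle is producing the extra $\tfrac{1}{\pi}\sys(\M)^2$ term beyond the direct cylinder estimate $\tfrac{1}{\pi}\sys\cdot h$, which is precisely the term not provided by Proposition~\ref{finslercylindre} applied to $\tilde\M$ alone. The wide hypothesis $h(\M)\geq\sys(\M)$ must enter decisively at this step, presumably by combining the cylinder estimate on $\tilde\M$ with the one on $C$ (or with an auxiliary enlargement of $\tilde\M$ obtained by attaching strips of width $\sys(\M)/2$ along its boundary so as to form a longer cylinder of systole still bounded below by~$\sys(\M)$ and height exceeding~$h(\M)+\sys(\M)$). Identifying the correct combination or auxiliary construction, and verifying that the resulting auxiliary cylinder has the required systole and height, is the technical heart of the proof and the step most vulnerable to going wrong.
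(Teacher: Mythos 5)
Your proposal is incomplete, and you correctly identify where: the term $\tfrac{1}{\pi}\sys(\M)^2$ beyond the cylinder estimate is exactly what your tools cannot produce. The double-cover step is fine (indeed $\sys(\tilde\M)=\sys_+(\M)$ and $h(\tilde\M)=h(\M)$, giving $\vol_{HT}(\M)\geq \tfrac{1}{\pi}\sys_+(\M)\,h(\M)$), but it is insufficient precisely in the worst case where the systole is realized by an orientation-preserving loop, i.e.\ $\sys_+(\M)=\sys(\M)$, and your second cylinder (cutting along a shortest one-sided loop) again only feeds into Proposition~\ref{finslercylindre}, which can never yield a constant better than $\tfrac{2}{\pi}\times(\text{systole})\times(\text{height})$ for the piece in question. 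Even applied to the core Mobius band this gives at best $\tfrac{1}{\pi}\sys^2$, half of what is needed. The auxiliary idea of attaching strips of width $\sys(\M)/2$ does not work as stated either, since the attached strips contribute volume, so the resulting inequality does not bound $\vol_{HT}(\M)$ by the desired quantity.

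The missing ingredient in the paper's argument is Ivanov's sharp inequality on Finsler projective planes (Theorem~\ref{Iv02}), reached through a different decomposition. One takes the collar $\mathbb{U}=\{x: d(x,\partial\M)\leq \tfrac{\lambda-1}{2}\sys(\M)\}$ (filled in to a genuine cylinder $\widehat{\mathbb{U}}$), applies Proposition~\ref{finslercylindre} to it to get $\vol_{HT}(\widehat{\mathbb{U}})\geq \tfrac{\lambda-1}{\pi}\sys(\M)^2$, and then shows that the complementary Mobius band $\M_-=\M\setminus\widehat{\mathbb{U}}$ satisfies $h(\M_-)=\sys(\M)$ and $\sys(\M_-)\geq\sys(\M)$; collapsing $\partial\M_-$ to a point produces a Finsler $\RP^2$ whose systole is $\min\{h(\M_-),\sys(\M_-)\}=\sys(\M)$, so Ivanov's theorem gives $\vol_{HT}(\M_-)\geq \tfrac{2}{\pi}\sys(\M)^2$. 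It is this factor $2$ from the projective-plane inequality, unattainable by cylinder estimates alone, that supplies the extra term; adding the two pieces gives $\tfrac{1+\lambda}{\pi}\sys(\M)^2=\tfrac{1}{\pi}\sys(\M)(\sys(\M)+h(\M))$. Without this (or an equivalent mechanism), your outline cannot close, so as it stands there is a genuine gap at the step you flagged.
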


We present examples showing this result is optimal at the end of this section, \cf~Example~\ref{ex:extremal-wide}.

\begin{proof}
Consider 
$$
\mathbb{U}:=\{x\in\M\mid d(x,\partial\M)\leq\frac{\lambda-1}{2} \sys(\M) \}.
$$
Slightly perturbing the distance function $d(.,\partial\M)$ if necessary, we can assume that this distance function is a Morse function on~$\M$ for which $\frac{\lambda-1}{2} \, \sys(\M)$ is a regular value. 
In this case, $\mathbb{U}$ is a surface with boundary. 
If $\M$ has some ``big bumps", the surface $\mathbb{U}$ may possibly have some holes. More precisely, the surface $\mathbb{U}$ may not be a topological cylinder as some of its boundary components may bound topological disks in $\M$. 

\medskip

Let $\widehat{\mathbb{U}}$ be the union of $\mathbb{U}$ with the topological disks of $\M$ bounded by the boundary components of $\mathbb{U}$. 
Under this construction, $\mathbb{\widehat{U}}$ is a cylinder one of whose boundary components agrees with $\partial\M$. 
Clearly, the height of~$\mathbb{\widehat{U}}$ is equal to~$\frac{\lambda-1}{2} \, \sys(\M)$.
Furthermore, since the inclusion $\mathbb{\widehat{U}} \subset \M$ induces a $\pi_1$-isomorphism, we have $\sys(\mathbb{\widehat{U}}) \geq \sys(\M)$.
Applying Proposition~\ref{finslercylindre} to the cylinder~$\mathbb{\widehat{U}}$ yields
\begin{equation} \label{eq:Uhat}
\vol_{HT}(\mathbb{\widehat{U}})\geq \frac{\lambda-1}{\pi} \, \sys(\M)^2.
\end{equation}

Now, consider the Finsler Mobius band $\M_-:=\M\setminus \mathbb{\widehat{U}}$.

\begin{lemma}\label{case2h}
The height and systole of~$\M_-$ satisfy
\[
h(\M_-)= \sys(\M)
\quad
\mbox{ and } 
\quad
\sys(\M_-)\geq \sys(\M).
\]
\end{lemma}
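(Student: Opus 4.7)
The plan is to exploit the cylindrical structure of $\widehat{\mathbb{U}}$ to transport information between the pairs $(\M,\partial\M)$ and $(\M_-,\partial\M_-)$. Since $\widehat{\mathbb{U}}$ is a cylinder with boundary circles $\partial\M$ and $\partial\M_-$, there is a deformation retraction $\rho:\M\to\M_-$ collapsing $\widehat{\mathbb{U}}$ onto $\partial\M_-$ and carrying $\partial\M$ onto $\partial\M_-$. This $\rho$ is a homotopy equivalence of pairs $(\M,\partial\M)\to(\M_-,\partial\M_-)$, inducing a bijection $\pi_1(\M,\partial\M)\cong\pi_1(\M_-,\partial\M_-)\cong\mathbb{Z}/2\mathbb{Z}$. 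Setting $t:=\tfrac{\lambda-1}{2}\sys(\M)$, every point of $\partial\M_-$ lies at distance exactly $t$ from $\partial\M$. The systolic bound $\sys(\M_-)\geq\sys(\M)$ is then immediate, since any noncontractible loop in $\M_-$ remains noncontractible in $\M$ via the $\pi_1$-isomorphism induced by inclusion, and retains its length.

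For $h(\M_-)\geq\sys(\M)$, take any arc $\gamma_-$ in $\M_-$ with endpoints on $\partial\M_-$ representing the nontrivial class of $\pi_1(\M_-,\partial\M_-)$, and extend it at each end by a minimizing path to $\partial\M$. Each extension lies in $\widehat{\mathbb{U}}$ and has length exactly $t$, so the resulting arc $\gamma$ in $\M$ satisfies $\ell(\gamma)=\ell(\gamma_-)+2t$. Applying $\rho$, the extensions become arcs in $\partial\M_-$, which represent the trivial class in $\pi_1(\M_-,\partial\M_-)$ (since $\partial\M_-$ represents twice the soul of $\M_-$, boundary loops die in the $\mathbb{Z}/2$ quotient). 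Hence $[\gamma]=[\rho\circ\gamma]=[\gamma_-]$ is nontrivial in $\pi_1(\M,\partial\M)$, so $\ell(\gamma)\geq h(\M)$ and thus $\ell(\gamma_-)\geq h(\M)-2t=\sys(\M)$.

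For the matching bound $h(\M_-)\leq\sys(\M)$, take a height arc $\gamma$ of $\M$ of length $h(\M)$ and decompose it along $\partial\M_-$ (after a generic transversality perturbation that does not affect the length estimates) into maximal subarcs $\gamma_1,\ldots,\gamma_k$ in $\M_-$ and subarcs $\sigma_0,\ldots,\sigma_k$ in $\widehat{\mathbb{U}}$. The outer pieces $\sigma_0,\sigma_k$ join $\partial\M$ to $\partial\M_-$ and so each have length at least $t$. Applying $\rho$ collapses every $\sigma_i$ into $\partial\M_-$, and the parity-sum formula in $\pi_1(\M_-,\partial\M_-)\cong\mathbb{Z}/2$ yields $[\gamma]\equiv\sum_j[\gamma_j]\pmod 2$. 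Since $[\gamma]$ is nontrivial, at least one $\gamma_j$ is nontrivial, and for that $\gamma_j$,
\[
h(\M_-)\leq\ell(\gamma_j)\leq\sum_i\ell(\gamma_i)\leq\ell(\gamma)-\ell(\sigma_0)-\ell(\sigma_k)\leq h(\M)-2t=\sys(\M).
\]

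The main technical obstacle is the homotopical bookkeeping: showing that $\rho_*$ really is a bijection of relative $\pi_1$'s and that boundary arcs, even those wrapping many times around $\partial\M_-$, contribute trivially to the parity sum. Both ultimately rest on the fact that $\partial\M_-$ represents twice a generator of $\pi_1(\M_-)=\mathbb{Z}$, so that boundary loops vanish modulo $2$ in the relative group; once this is in place, the rest of the argument is a clean length accounting across the decomposition $\M=\widehat{\mathbb{U}}\cup\M_-$.
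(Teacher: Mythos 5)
Your proof is correct and follows essentially the same route as the paper: extend a nontrivial arc of $\M_-$ by two arcs of length $t=\tfrac{\lambda-1}{2}\sys(\M)$ to get the lower bound $h(\M_-)\geq h(\M)-2t=\sys(\M)$, cut a height arc of $\M$ along $\partial\M_-$ for the upper bound, and use the $\pi_1$-isomorphism induced by the inclusion $\M_-\subset\M$ for the systole. Your treatment is in fact slightly more careful than the paper's, since you justify the relative-homotopy nontriviality via the retraction/parity argument and allow the height arc of $\M$ to cross $\partial\M_-$ more than twice, whereas the paper simply asserts these points.
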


\begin{proof}
Let $\gamma_-$ be a height arc of~$\M_-$, \cf~Definition~\ref{heightdefinition}.
By construction, $\partial \mathbb{\widehat{U}}=\partial\M_-\cup\partial\M$ and the points of $\partial\M_-$ are at distance $\frac{\lambda-1}{2} \, \sys(\M)$ from $\partial\M$. 
Therefore, the two endpoints of $\gamma_-$ can be connected to~$\partial\M$ by two arcs $\gamma_{1}$ and $\gamma_{2}$ of~$\mathbb{\widehat{U}}$, each of length $\frac{\lambda-1}{2} \, \sys(\M)$. 
Moreover, the arc $\gamma:=\gamma_-\cup\gamma_{1}\cup\gamma_{2}$ with endpoints in~$\partial\M$ induces a nontrivial class in $\pi_{1}(\M,\partial\M)$. 
Therefore, since $h(\M) = \lambda \, \sys(\M)$, we obtain
\begin{align*}
h(\M_-)
&=\ell(\gamma)-(\lambda-1) \, \sys(\M) \\
&\geq h(\M)-(\lambda-1) \, \sys(\M) \\
&\geq \sys(\M).
\end{align*}
Now, let $\gamma$ be a height arc of~$\M$.
By definition, we have $\ell(\gamma)=h(\M)=\lambda \, \sys(\M)$. 
The part $\gamma\cap\mathbb{\widehat{U}}$ of $\gamma$ in $\mathbb{\widehat{U}}$ is made of two arcs, each of length at least $\frac{\lambda-1}{2} \, \sys(\M)$. 
Moreover, the arc $\gamma\cap\M_-$ with endpoints in $\partial\M_-$ induces a nontrivial class in $\pi_1(\M_-,\partial\M_-)$. 
Hence,
\begin{align*}
h(\M_-)
&\leq \ell(\gamma)-\ell(\gamma\cap\mathbb{\widehat{U}})\\
&\leq h(\M)-(\lambda-1) \, \sys(\M)\\
&\leq \sys(\M).
\end{align*}

Since the inclusion $\M_-\subset\M$ induces a $\pi_1$-isomorphism, we obtain
\[
\sys(\M_-)\geq \sys(\M).
\]
\end{proof}

Consider the projective plane~$\RP^2$ defined as the quotient $\M_-/\partial\M_-$, where the boundary $\partial\M_-$ is collapsed to a point. 
Strictly speaking, the Finsler metric on~$\RP^2$ has a singularity at the point to which $\partial\M_-$ collapses, but we can smooth it out. 

\medskip

The following result allows us to derive the systole of~$\RP^2$.

\begin{lemma}\label{metricmobius}
Let $\RP^2$ be the projective plane defined as the quotient $\M/\partial\M$ of a Finsler Mobius band~$\M$.
Then, 
$$
\sys(\RP^2)=\min\left\{h(\M),\sys(\M)\right\}
$$
where $\RP^2$ is endowed with the quotient metric.
\end{lemma}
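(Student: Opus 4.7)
The plan is to exploit the quotient map $q:\M\to\RP^2$ collapsing $\partial\M$ to a single point $p_0$, and to match noncontractible loops in $\RP^2$ with either odd homotopy classes of interior loops in $\M$ or nontrivial arcs in $\pi_1(\M,\partial\M)$. Since $\partial\M\hookrightarrow\M$ induces multiplication by $2$ on $\pi_1(\M)=\mathbb{Z}$, the long exact sequence of the pair yields $\pi_1(\M,\partial\M)=\mathbb{Z}/2$, and $q$ induces an isomorphism $\pi_1(\M,\partial\M)\to\pi_1(\RP^2)=\mathbb{Z}/2$. This dictionary is the translation device used throughout.

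For the upper bound, I would exhibit two explicit noncontractible loops in $\RP^2$. A height arc of $\M$ projects to a loop at $p_0$ of length exactly $h(\M)$, noncontractible in $\RP^2$ because the arc represents the nontrivial element of $\pi_1(\M,\partial\M)$. Likewise, the shortest noncontractible loop of $\M$ whose class in $\pi_1(\M)=\mathbb{Z}$ is odd projects to a noncontractible loop in $\RP^2$ of length at most $\sys(\M)$, since projection is length-preserving off $\partial\M$ and can only shorten a curve by collapsing boundary excursions. Together these give $\sys(\RP^2)\le\min\{h(\M),\sys(\M)\}$.

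For the lower bound, let $\gamma$ be a noncontractible loop in $\RP^2$. If $\gamma$ avoids $p_0$, it lifts uniquely to a loop $\tilde\gamma$ in $\M\setminus\partial\M$ of the same length; its class in $\pi_1(\M)=\mathbb{Z}$ must reduce to the nontrivial element of $\mathbb{Z}/2$, hence is odd and nonzero, so $\ell(\gamma)=\ell(\tilde\gamma)\ge\sys(\M)$. Otherwise, cut $\gamma$ at its successive visits to $p_0$ into pieces $\gamma_1,\dots,\gamma_k$, each lifting to an arc in $\M$ with endpoints on $\partial\M$; the class of $\gamma$ in $\mathbb{Z}/2$ equals the sum of the pieces' classes in $\pi_1(\M,\partial\M)$, forcing at least one piece $\gamma_i$ to be nontrivial. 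Any subarc of $\gamma_i$ lying in $\partial\M$ can be pruned by a rel-$\partial\M$ homotopy, using connectedness of $\partial\M$, without altering the class; the resulting arc $\tilde\gamma_i$ satisfies $\ell_{\RP^2}(q(\gamma_i))=\ell_\M(\tilde\gamma_i)\ge h(\M)$, so $\ell(\gamma)\ge h(\M)$. Combining the two cases yields $\sys(\RP^2)\ge\min\{h(\M),\sys(\M)\}$.

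The main technical obstacle I foresee is precisely the boundary-pruning step: subarcs of a piece $\gamma_i$ that travel along $\partial\M$ have positive length in $\M$ but collapse to zero length in $\RP^2$, so a naive length comparison in $\M$ would be useless. Connectedness of $\partial\M$ is exactly what allows one to homotope such subarcs to point visits to $\partial\M$ without changing the relative homotopy class, which restores the correct length comparison and closes the argument.
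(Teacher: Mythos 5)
Your proof follows the same route as the paper's: the lower bound is exactly the paper's dichotomy (a noncontractible loop of $\RP^2$ either avoids the collapsed point and lifts to a loop of odd class in $\M$, or passes through it and contains a piece lifting to an arc that is nontrivial in $\pi_1(\M,\partial\M)$), and you carry this half out with more care than the paper does, in particular the identification $\pi_1(\M,\partial\M)\cong\mathbb{Z}/2$, the decomposition at the visits to the collapsed point, and the pruning of boundary excursions. That half is fine and is the only part of the lemma actually used later in the paper.

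The gap is in your upper bound $\sys(\RP^2)\le\sys(\M)$. You take the shortest noncontractible loop of $\M$ of \emph{odd} class and assert that its projection has length at most $\sys(\M)$; but what your argument yields is length at most $\sys_{-}(\M)$, the nonorientable systole, which can be strictly larger than $\sys(\M)$. Indeed the systole of a Mobius band may be realized only by orientable (even) loops: for the round spherical band $\M_a$ with $a$ close to $\pi/2$ one has $\sys(\M_a)=\sys_{+}(\M_a)=2\pi\cos a$, attained by the boundary, while $\sys_{-}(\M_a)=\pi$ and $h(\M_a)=2a$. Even loops project to \emph{contractible} loops of $\RP^2$, so collapsing boundary excursions cannot repair the bound; what one really gets is $\sys(\RP^2)=\min\{h(\M),\sys_{-}(\M)\}$, and in the example above this exceeds $\min\{h(\M),\sys(\M)\}$, so the asserted equality itself fails in this regime. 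To be fair, the paper's own proof glosses over exactly the same point (``we can easily construct noncontractible loops in $\RP^2$ of length $\sys(\M)$ or $h(\M)$''), and the discrepancy is harmless where the lemma is applied: there $h(\M_-)=\sys(\M)\le\sys(\M_-)\le\sys_{-}(\M_-)$, and only the inequality $\sys(\RP^2)\ge\min\{h(\M_-),\sys(\M_-)\}$ is needed, which your argument (like the paper's) establishes correctly. So you should either restrict the equality to the case $\sys_{-}(\M)\le\min\{h(\M),\sys_{+}(\M)\}$ or state it with $\sys_{-}$ in place of $\sys$.
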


\begin{proof}
Let $\gamma$ be a noncontractible loop in $\RP^2$. 
The curve~$\gamma$ lifts either to a noncontractible loop in $\M$ or to a noncontractible arc in $\M$ joining two points of the boundary $\partial\M$. In the former case, the length of $\gamma$ is at least $\sys(\M)$, while in the latter, it is at least $h(\M)$.
On the other hand, we can easy construct noncontractible loops in~$\RP^2$ of length $\sys(\M)$ or~$h(\M)$. 
\end{proof}

From Lemma~\ref{metricmobius} and Lemma~\ref{case2h}, the systole of $\RP^2$ is equal to~$\sys(\M)$.
Applying Theorem \ref{Iv02} to $\RP^2$, we obtain 
$$
\vol_{HT}(\M_-) =\vol_{HT}(\RP^2) \geq \frac{2}{\pi} \, \sys^2(\M).
$$
This inequality combined with~\eqref{eq:Uhat} yields
\begin{align*}
\vol_{HT}(\M)
&= \vol_{HT}(\mathbb{\widehat{U}})+\vol_{HT}(\M_-)\\
&\geq \frac{1+\lambda}{\pi} \, \sys(\M)^2.
\end{align*}
Hence the result.
\end{proof}

We conclude this section by describing extremal and almost extremal Finsler metrics when $\lambda \geq 1$.

\begin{example} \label{ex:extremal-wide}
Let $\lambda\in [1,+\infty)$.
\begin{itemize}
\item[(E.1)] 
The horizontal translation $\tau$ of vector $\pi \, \vec{e}_x$ is an isometry of the plane $\mathbb{R}^2$ endowed with the sup-norm. 
The quotient of the strip $\mathbb{R}\times[0,(\lambda-1)\frac{\pi}{2}]$ by the isometry group $\langle\tau \rangle$ generated by $\tau$ is a cylinder~$C$. 
The Finsler mobius band $\M$ obtained by gluing a boundary component of~$C$ to~$\M_{F_{a}}$ along $\partial\M_{F_{a}}$ with $a=\frac{\pi}{3}$, \cf~Section~\ref{sec:candidates}, satisfies $\vol_{HT}(\M)=(1+\lambda)\pi$, $\sys(\M)=\pi$ and $h(\M)=\lambda\pi$. 
See Figure~\ref{fig:extmobius}.(A).

\medskip

\item[(E.2)] 
Endow the plane $\mathbb{R}^2$ with the sup-norm. 
The quotient of the strip $\mathbb{R}\times[-\frac{\pi}{2},\frac{\pi}{2}]$ by the group generated by the map $(x,y)\mapsto (x+\pi,-y)$ is a Finsler Mobius band $\M_\pi$ with $\vol_{HT}(\M_\pi)=2 \pi$, $\sys(\M_\pi)=\pi$ and $h(\M_\pi)= \pi$.
Let $C$ be the Finsler cylinder defined in~(E.1). 
Attach~$C$ to $\mathbb{M_{\pi}}$ via a cylindrical part of arbitrarily small area, \cf~Figure~\ref{fig:extmobius}.(B), so that the resulting space is a Finsler Mobius band $\M$ with \mbox{$\sys(\M)=\pi$}, $h(\M)=\nu_{1}\lambda\pi$ and $\vol_{HT}(\M)=\nu_{2}(\lambda+1)\pi$, where $\nu_{1}, \nu_{2} > 1$ are arbitrarily close to $1$. 
This Finsler Mobius band is almost extremal for the inequality~\eqref{eq:wide} when $\lambda \geq 1$.
\end{itemize}
\end{example}

\begin{figure}[htbp]    
\begin{subfigure}[!]{0.4\textwidth}
\includegraphics[width=\textwidth]{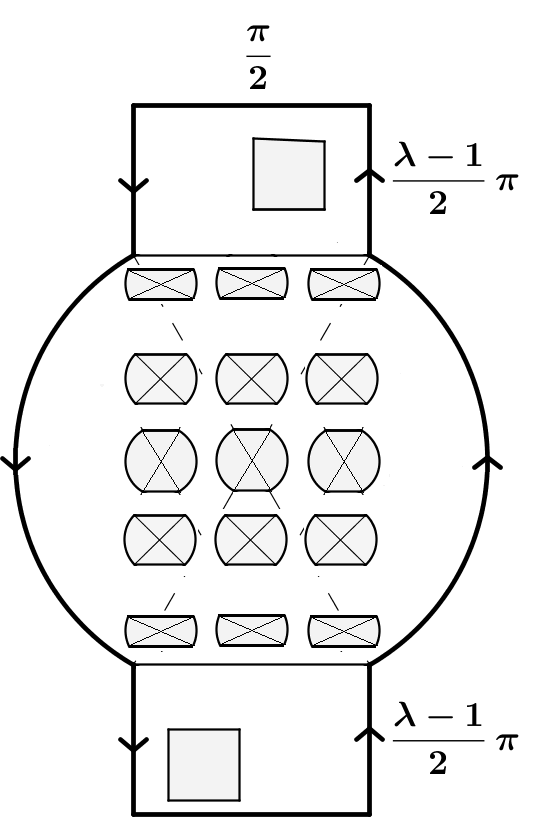}
\caption{An extremal metric}
\end{subfigure}%
         \qquad\qquad\qquad
\begin{subfigure}[!]{0.4\textwidth}
\includegraphics[width=\textwidth]{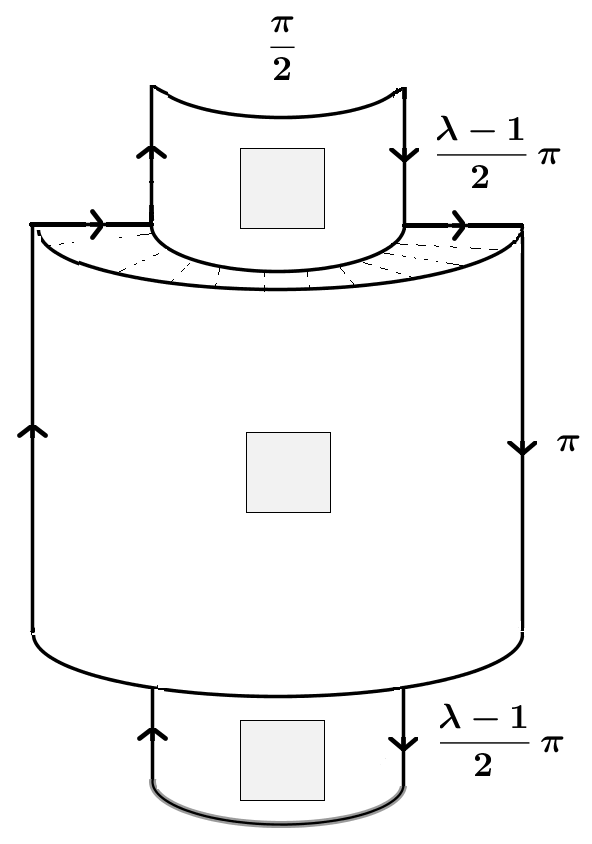}
\caption{An almost extremal metric} \label{almostextremal}
\end{subfigure}
\caption{Extremal and almost extremal Finsler Mobius bands when $h(\M)\geq \sys(\M)$.}\label{fig:extmobius}
\end{figure}

\section{Systolic inequalities on narrow Finsler Mobius bands}  \label{sec:narrow}
In this section, we give a proof of Theorem~\ref{finslermobius} for narrow Finsler Mobius bands, that is, when $\lambda <1$. 
More precisely, we prove the following result.

\begin{proposition} \label{prop:narrow}
Let $\M$ be a Finsler Mobius band with $h(\M) < \sys(\M)$.
Then
\[
\vol_{HT}(\M) \geq \frac{2}{\pi} \, \sys(\M) \, h(\M).
\]
\end{proposition}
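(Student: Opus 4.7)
The strategy mirrors the proof of Proposition~\ref{prop:wide}, with the roles of cutting and attaching reversed: since $\lambda<1$, instead of cutting off a cylindrical collar from $\M$, we attach one to $\partial\M$ in order to turn $\M$ into a wide Finsler Mobius band, to which Proposition~\ref{prop:wide} then applies.

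Set $t := \frac{1}{2}(\sys(\M) - h(\M))$, which is positive since $\M$ is narrow. Let $C$ be a flat Finsler cylinder of height $t$ whose inner boundary is isometric to $\partial\M$, endowed with a parallelogram norm so that $\vol_{HT}(C) = \frac{2}{\pi}\,\ell(\partial\M)\,t$. Gluing $C$ to $\M$ along $\partial\M$ produces a Finsler Mobius band $\M' := \M \cup_{\partial\M} C$. The natural 1-Lipschitz retraction $\M' \to \M$ that collapses $C$ onto $\partial\M$ shows $\sys(\M') \geq \sys(\M)$, while any arc in $\M'$ representing a nontrivial class in $\pi_1(\M',\partial\M')$ must cross the collar at two points (each contributing length at least $t$) and contain a nontrivial subarc in $\M$ (of length at least $h(\M)$), so $h(\M') \geq h(\M) + 2t = \sys(\M)$. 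Hence $\M'$ is wide.

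Applying Proposition~\ref{prop:wide} to $\M'$ yields $\vol_{HT}(\M') \geq \frac{1}{\pi}\,\sys(\M')(\sys(\M') + h(\M')) \geq \frac{2}{\pi}\,\sys(\M)^2$. Subtracting the collar's volume gives
\[
\vol_{HT}(\M) \geq \frac{2}{\pi}\sys(\M)^2 - \frac{2}{\pi}\,\ell(\partial\M)\,t,
\]
which, using $2t = \sys(\M) - h(\M)$, reduces to the desired $\vol_{HT}(\M) \geq \frac{2}{\pi}\sys(\M)\,h(\M)$ precisely when $\ell(\partial\M) \leq 2\sys(\M)$.

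The principal obstacle is the remaining regime $\ell(\partial\M) > 2\sys(\M)$, where the collar's volume outweighs the gain from the wide-case estimate. To handle it, one analyses the orientable systole: if $\sys_+(\M) \geq 2\sys(\M)$, applying Proposition~\ref{finslercylindre} to the orientation double cover of $\M$ (a cylinder of volume $2\vol_{HT}(\M)$, systole $\sys_+(\M)$, and height $h(\M)$) concludes at once; otherwise a shortest orientation-preserving noncontractible loop $\beta$, of length $\sys_+(\M) < \ell(\partial\M)$, lies in the interior of $\M$ and separates it into an annulus bounded by $\partial\M$ and $\beta$, together with a smaller Finsler Mobius band of strictly shorter boundary, thereby enabling a recursive argument combined with Proposition~\ref{finslercylindre} applied to the annulus.
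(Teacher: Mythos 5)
Your first two cases are essentially sound (modulo the standard smoothing of the glued metric): attaching a sup-norm collar of height $t=\tfrac12(\sys(\M)-h(\M))$ and invoking Proposition~\ref{prop:wide} does give the claim when $\ell(\partial\M)\leq 2\sys(\M)$ (note that to say $\M'$ is wide you also need $\sys(\M')\leq\sys(\M)$, which is immediate since $\M\subset\M'$ is $\pi_1$-isomorphic), and the orientation double cover argument together with Proposition~\ref{finslercylindre} handles $\sys_+(\M)\geq 2\sys(\M)$. The genuine gap is the last case, $\ell(\partial\M)>2\sys(\M)$ and $\sys_+(\M)<2\sys(\M)$, which is exactly where the difficulty of the proposition sits, and there your ``recursive argument'' is a hope rather than a proof: no quantity is exhibited on which the recursion terminates, and, more seriously, the bookkeeping goes the wrong way. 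Cutting along a shortest simple orientable loop $\beta$ gives $\M=\M_s\cup A$ with $\sys(\M_s)\geq\sys(\M)$ and $\sys(A)\geq\sys_+(\M)$, but a height arc of $\M$ must cross $A$ twice and contain an arc of $\M_s$ nontrivial in $\pi_1(\M_s,\partial\M_s)$, so $h(\M)\geq h(\M_s)+2h(A)$: the heights of the pieces are only bounded \emph{above} by data of $\M$, not below. Applying the narrow inequality inductively to $\M_s$ and Proposition~\ref{finslercylindre} to $A$ yields at best $\vol_{HT}(\M)\geq\frac{2}{\pi}\bigl(\sys(\M_s)h(\M_s)+\sys_+(\M)h(A)\bigr)$, and to reach $\frac{2}{\pi}\sys(\M)h(\M)$ with $h(\M)-h(\M_s)$ possibly equal to $2h(A)$ (and $\sys(\M_s)=\sys(\M)$, which is typical since $\M_s$ contains the core) you would need precisely $\sys_+(\M)\geq 2\sys(\M)$ --- the hypothesis excluded in this case. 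So the decomposition loses in the only regime where it is invoked, and this regime is not exceptional: $\ell(\partial\M)$ can be made arbitrarily large compared with $\sys(\M)$ while keeping $\sys_+(\M)<2\sys(\M)$.

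For comparison, the paper sidesteps boundary length entirely. It doubles $\M$ across $\partial\M$ to a Klein bottle with a soul-switching symmetry and proves the inequality first for the ratios $h/s=1/2^k$ (Lemma~\ref{lem:1/2}, cutting the double open along the soul of the reflected copy and using the symmetry to push curves back into one copy), then for all dyadic ratios via the weighted decomposition of the Klein bottle into the two Mobius bands $\{\lambda_2\,d(x,\sigma)\leq\lambda_1\,d(x,\sigma')\}$ and its complement (Lemma~\ref{lem:(x+y)/2}), and concludes by density of dyadic rationals and continuity of the systole, height and volume. If you want to rescue your approach, the missing ingredient is an argument replacing the recursion in the third case; as it stands, that case is unproven.
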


This inequality is optimal.
Extremal Finsler metrics can be constructed as follows.

\begin{example} \label{ex:extremal-narrow}
Let $\lambda < 1$.
Endow the plane $\mathbb{R}^2$ with the sup-norm. 
The quotient of the strip $\mathbb{R}\times[-\frac{\pi}{2}\lambda,\frac{\pi}{2}\lambda]$ by the group generated by the map $(x,y)\mapsto (x+\pi,-y)$ is a Finsler Mobius band $\M$ with $\vol_{HT}(\M)=2\lambda\pi$, $\sys(\M)=\pi$ and $h(\M)=\lambda\pi$.
\end{example}

Before proceeding to the proof of this proposition, we need to introduce a few definitions and notions.

\begin{definition} \label{def:K}
The orientable double cover of a Finsler Mobius band~$\M$ is a cylinder denoted by~$C$.
The points of~$C$ which are at the same distance from each boundary component of~$C$ form a simple closed curve~$c$ invariant under deck transformations.
The \emph{soul} of~$\M$, denoted by~$\sigma$, is defined as the projection of~$c$ to~$\M$.
Note that $\sigma$ is a nonorientable simple closed curve of~$\M$ whose homotopy class generates~$\pi_1(\M)$.

\medskip

Let $\K$ be the Finsler Klein bottle obtained by attaching to~$\M$ another copy~$\M'$ of~$\M$ along their boundary.
(The Finsler metric on~$\K$ may have a singularity line along~$\partial \M$.)
The isometry of~$\K$ switching the souls of $\M$ and~$\M'$, and leaving~$\partial \M$ pointwise fixed is called the \emph{soul-switching symmetry} of~$\K$.

\medskip

Let $s,h \in \R_+$.
The \emph{systole-height inequality} on the Mobius band is said to be satisfied for~$(s,h)$ if for every Finsler Mobius band~$\M$ with $\sys(\M) \geq s$ and $h(\M) \geq h$, we have
\[
\vol_{HT}(\M) \geq \frac{2}{\pi} \, sh.
\]
By scale invariance, if the systole-height inequality is satisfied for~$(s,h)$, then it is also satisfied for $(s',h')$ with $\frac{h'}{s'}=\frac{h}{s}$.
\end{definition}

We first prove the following preliminary result.

\begin{lemma} \label{lem:1/2}
Let $\lambda \in (0,1]$.
Suppose that the systole-height inequality on the Mobius band is satisfied for~$(s,h)$ with~$\frac{h}{s}=\lambda$.
Then, it is also satisfied for~$(s,h)$ with~$\frac{h}{s}=\frac{\lambda}{2}$.
\end{lemma}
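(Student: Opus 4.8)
The plan is to reduce the case $\frac{h}{s} = \frac{\lambda}{2}$ to the case $\frac{h}{s} = \lambda$ by a doubling (or "unfolding") construction on the Mobius band along the soul. Let $\M$ be a Finsler Mobius band with $\sys(\M) \geq s$ and $h(\M) \geq h$ where $\frac{h}{s} = \frac{\lambda}{2}$; we want $\vol_{HT}(\M) \geq \frac{2}{\pi} \, sh$. First I would pass to the orientable double cover $C$ of $\M$, which is a Finsler cylinder carrying a free involution $\iota$ (the deck transformation) whose quotient is $\M$, together with the invariant midcurve $c$ projecting to the soul $\sigma$. Cutting $C$ along $c$ produces two cylinders; alternatively, cut $\M$ along its soul $\sigma$ to obtain a single cylinder $\M'$ (since $\sigma$ is one-sided, cutting along it leaves the surface connected) whose volume equals $\vol_{HT}(\M)$ and one of whose boundary components is $\partial\M$ while the other is the double cover of $\sigma$. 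The key metric observation will be that the height of this cut-open cylinder $\M'$ — i.e. the distance in $\M'$ between its two boundary components — is at least $2h(\M)$: any arc in $\M$ crossing the soul essentially once, from $\partial\M$ back to $\partial\M$, has length $\geq h(\M)$, and an arc realizing the distance between the two boundary components of $\M'$ corresponds, after gluing back, to such an arc traversed "twice" relative to the soul.

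The second step is to rebuild a Mobius band from $\M'$ whose parameters fall under the hypothesis. Reglue $\M'$ to itself (or glue $\M'$ to a second copy of itself along $\partial\M$ and then quotient) so as to produce a new Finsler Mobius band $\widetilde{\M}$ with $\vol_{HT}(\widetilde{\M}) = \vol_{HT}(\M)$ (volume is additive and unchanged by cutting/regluing, since volume is a local notion), with $\sys(\widetilde{\M}) \geq \sys(\M) \geq s$, and — crucially — with $h(\widetilde{\M}) \geq 2h(\M) \geq 2h$. Applying the hypothesis, valid for ratio $\lambda$ and hence (by the scale-invariance remark in Definition~\ref{def:K}) for the pair $(s, 2h)$ since $\frac{2h}{s} = \lambda$, gives
\[
\vol_{HT}(\M) = \vol_{HT}(\widetilde{\M}) \geq \frac{2}{\pi} \, s \cdot (2h) \cdot \frac{1}{2} \quad ?
\]
— here I need to be careful with the bookkeeping: the hypothesis for ratio $\lambda$ with systole $\geq s$ and height $\geq 2h$ yields $\vol_{HT}(\widetilde{\M}) \geq \frac{2}{\pi} s (2h)$, which is $\frac{4}{\pi} sh$, i.e. \emph{twice} what we want. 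This factor of $2$ is exactly the slack one expects, because the regluing to boost the height must be paid for: the construction that doubles the height of the cylinder before closing it up into $\widetilde{\M}$ will typically also double the volume, not preserve it. So the correct version of the second step is a regluing in which $\vol_{HT}(\widetilde{\M}) = 2\,\vol_{HT}(\M)$ while $h(\widetilde{\M}) \geq 2h$ and $\sys(\widetilde{\M}) \geq s$; then the hypothesis gives $2\,\vol_{HT}(\M) \geq \frac{2}{\pi} s (2h) = \frac{4}{\pi} sh$, hence $\vol_{HT}(\M) \geq \frac{2}{\pi} sh$, as desired.

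The step I expect to be the main obstacle is the metric control on the systole and height of the reglued Mobius band $\widetilde{\M}$ — specifically, verifying both that cutting along the soul and doubling genuinely produces $h(\widetilde{\M}) \geq 2h(\M)$ (one must check that no shortcut arc in $\widetilde{\M}$ defeats this, using that arcs crossing the soul an even number of times are homotopically trivial rel boundary, while odd crossings force the length to accumulate) and that the systole does not drop, for which the right tool is that the relevant inclusions induce $\pi_1$-isomorphisms, exactly as in the proof of Lemma~\ref{case2h}. I would set this up by working upstairs in double covers: realize $\widetilde{\M}$ so that its orientable double cover is obtained by gluing two copies of $C$ cut along $c$, making the deck involution and the length estimates transparent. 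Once the inequalities $h(\widetilde{\M}) \geq 2h$, $\sys(\widetilde{\M}) \geq s$, and $\vol_{HT}(\widetilde{\M}) = 2\vol_{HT}(\M)$ are in hand, the conclusion is immediate from the hypothesis together with the scale-invariance already recorded in Definition~\ref{def:K}.
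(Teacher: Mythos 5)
Your overall architecture is the same as the paper's: glue a second copy of $\M$ to itself along $\partial\M$ to form the Klein bottle of Definition~\ref{def:K}, cut it open along the soul of the second copy to obtain a Mobius band $\widetilde{\M}$ with $\vol_{HT}(\widetilde{\M})=2\vol_{HT}(\M)$, prove $\sys(\widetilde{\M})\geq s$ and $h(\widetilde{\M})\geq 2h$, and apply the hypothesis at ratio $\lambda$ (your self-corrected bookkeeping is the right one). However, the two steps you yourself flag as the main obstacles are exactly where your proposed justifications fail. Your ``key metric observation'' is false: in the cylinder obtained by cutting $\M$ along its soul $\sigma$, the distance between $\partial\M$ and the boundary circle covering $\sigma$ equals $d(\partial\M,\sigma)$, which is at most $\tfrac{1}{2}h(\M)$ (a minimizing arc from $\partial\M$ stopped at its first contact with $\sigma$ survives the cut), not at least $2h(\M)$. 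Moreover, the inequality actually needed, $h(\widetilde{\M})\geq 2h(\M)$, cannot be extracted from parity-of-crossings considerations alone. What makes it true is that the soul is by definition the \emph{equidistant} curve of the orientable double cover: if $x\in\sigma$ is a closest point of the soul to the boundary, its lifts are equidistant from the two boundary circles of the cover, and concatenating two minimizing arcs there produces an essential arc of length at most $2d(\sigma,\partial\M)$, whence $d(\sigma,\partial\M)\geq\tfrac{1}{2}h(\M)$ in each copy; a height arc of $\widetilde{\M}$ must run from the cut soul across $\partial\M$ to $\sigma$ and back, so its length is at least $4\cdot\tfrac{1}{2}h(\M)=2h(\M)$. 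If one cut instead along a core curve that is not equidistant, the inequality $h(\widetilde{\M})\geq 2h(\M)$ can genuinely fail, so ``odd crossings force the length to accumulate'' is not a proof.

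The systole step is also not handled by the tool you cite. In Lemma~\ref{case2h} the $\pi_1$-isomorphism is used for a \emph{subsurface} of $\M$, whose noncontractible loops are noncontractible loops of $\M$ and hence no shorter than $\sys(\M)$. Here the situation is reversed: $\widetilde{\M}$ \emph{contains} $\M$, and a noncontractible loop of $\widetilde{\M}$ need not lie in $\M$, so no inclusion-induced isomorphism gives $\sys(\widetilde{\M})\geq\sys(\M)$. The paper's argument uses the soul-switching symmetry of the doubled Klein bottle: the portion of such a loop lying in the second copy is reflected back into $\M$, yielding a homotopic loop of the same length inside $\M$. Some folding argument of this kind (which your double-cover setup could support, but which you do not supply) is indispensable; as written, both the height bound and the systole bound for $\widetilde{\M}$ are unproved, and the one quantitative claim you do make is off by a factor of four.
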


\begin{proof}
Let $\M$ be a Finsler Mobius band with $\sys(\M) \geq s$ and $h(\M) \geq h$, where $\frac{s}{h}=\frac{\lambda}{2}$.
Consider the Klein bottle~$\K$ made of two copies of~$\M$ defined in Definition~\ref{def:K} and cut it open along the soul~$\sigma'$ of~$\M'$.
The resulting surface is a Finsler Mobius band denoted by~$2\M$ whose boundary component double covers~$\sigma'$ in~$\K$.

\medskip

Let $\alpha$ be a noncontractible loop of~$2\M$.
Decompose $\alpha$ into two parts $a=\alpha \cap \M$ and $a'=\alpha \cap \M'$ with $\alpha = a \cup a'$.
The parts $a$ and~$a'$ form two collections of arcs with endpoints lying in~$\partial \M = \partial \M'$.
By construction, the image~$a''$ of~$a'$ by the soul-switching symmetry lies in~$\M$.
Furthermore, the union $\bar{\alpha} = a \cup a''$ forms a closed curve lying in~$\M$ and homotopic to~$\alpha$ in~$2\M$ (and so noncontractible in~$\M$).
Since $\bar{\alpha}$ has the same length as~$\alpha$, we conclude that $\sys(2\M) \geq \sys(\M) \geq s$.

Actually, since the inclusion $\M \subset 2\M$ is a strong deformation retract, we derive the relation $\sys(2\M) = \sys(\M)$.
But we will not make use of this equality in the sequel.

\medskip

By construction, the distance between the soul~$\sigma$ and~$\partial \M$ (and between~$\sigma'$ and~$\partial \M'$) is at least~$\tfrac{1}{2} h(\M)$.
This implies that $h(2\M) \geq 2 h(\M) \geq 2h$.

Actually, we can show that $h(2\M) = 2 h(\M)$ (but we will not make use of this relation afterwards).
Indeed, let $\alpha$ be a height arc of~$\M$.
By definition, $\ell(\alpha) = h(\M)$.
Denote by~$\alpha'$ its image in~$\M'$ by the soul-switching symmetry of~$\K$.
The trace of the union~$\alpha \cup \alpha'$ to~$2\M$ defines an arc with endpoints in~$\partial (2\M)$ inducing a nontrivial class in~$\pi_1(2\M,\partial(2\M))$.
The length of this arc is twice the length of~$\alpha$.
Therefore, $h(2\M) \leq 2 h(\M)$. 
Hence, the equality $h(2\M) = 2 h(\M)$. 

\medskip

In conclusion, the Mobius band~$2\M$ satisfies $\sys(2\M) \geq s$ and $h(2\M) \geq 2h$.
Since $\frac{2h}{s} = \lambda$, the systole-height inequality is satisfied for~$(s,2h)$ by the lemma assumption.
Therefore, 
\[
2 \, \vol_{HT}(\M) = \vol_{HT}(2 \M) \geq \frac{4}{\pi} \, sh
\]
and the result follows.
\end{proof}

We establish a second preliminary result.

\begin{lemma}\label{lem:(x+y)/2}
Let $\lambda_1, \lambda_2 \in \R$ such that $0< \lambda_1 < \lambda_2 \leq 1$. 
Suppose that the systole-height inequality on the Mobius band is satisfied for~$(s,h)$ with $\frac{h}{s} = \lambda_1$ or~$\lambda_2$.
Then, it is also satisfied for~$(s,h)$ with $\frac{h}{s}=\frac{\lambda_1+\lambda_2}{2}$.
\end{lemma}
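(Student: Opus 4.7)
The plan is to mimic the doubling construction of Lemma~\ref{lem:1/2}, but to cut along a curve placed so that the two resulting pieces are Mobius bands of ratios exactly~$\lambda_1$ and~$\lambda_2$. By scale invariance, normalize $\sys(\M)=s$ and $h(\M)=\lambda s$ with $\lambda=(\lambda_1+\lambda_2)/2$, and set $w=(\lambda-\lambda_1)s/2=s(\lambda_2-\lambda_1)/4$.

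Glue two copies $\M_a,\M_b$ of~$\M$ along their common boundary to form a Finsler Klein bottle~$\K$, so that $\vol_{HT}(\K)=2\vol_{HT}(\M)$. After a small Morse perturbation (as in Proposition~\ref{prop:wide}) making $w$ a regular value of $d(\cdot,\partial\M_b)$, let $c\subset\M_b$ be the simple closed curve obtained from $\{x:d(x,\partial\M_b)=w\}$ by filling in enclosed topological disks, and cut~$\K$ along~$c$. The curve~$c$ is freely homotopic to $\partial\M_b$ in~$\M_b$, hence null-homologous mod~$2$ in~$\K$, so the cut separates~$\K$ into two Finsler Mobius bands: $\M^{(1)}$ containing the soul $\sigma_b$, and $\M^{(2)}=\M_a\cup C_b$ containing $\sigma_a$, where $C_b:=\M_b\setminus\M^{(1)}$ is a cylindrical collar glued to~$\M_a$ along $\partial\M_a=\partial\M_b$.

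Both inclusions $\M^{(i)}\hookrightarrow\K$ induce $\pi_1$-iso\-morphisms, so $\sys(\M^{(i)})\geq\sys(\M)=s$. For the height of~$\M^{(1)}$: extending a height arc of~$\M^{(1)}$ through $C_b$ by arcs of length at least~$w$ at each endpoint produces a non-boundary-homotopic arc in~$\M_b$ of length at least $h(\M_b)=\lambda s$, so $h(\M^{(1)})\geq\lambda s-2w=\lambda_1 s$, in the spirit of Lemma~\ref{case2h}. For the height of~$\M^{(2)}$: the deformation retraction $\M^{(2)}\to\M_a$ collapsing $C_b$ onto $\partial\M_a$ sends any non-boundary-homotopic arc $\gamma$ of~$\M^{(2)}$ to a non-boundary-homotopic arc in~$\M_a$; hence $\gamma$ must contain a subarc in~$\M_a$ that is itself non-boundary-homotopic in~$\M_a$, of length $\geq h(\M_a)=\lambda s$, plus its initial and terminal subarcs in~$C_b$ joining $c$ to $\partial\M_a$, of length $\geq w$ each, giving $h(\M^{(2)})\geq\lambda s+2w=\lambda_2 s$.

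Applying the $\lambda_1$- and $\lambda_2$-hypotheses to $\M^{(1)}$ and $\M^{(2)}$ respectively yields $\vol_{HT}(\M^{(i)})\geq\frac{2}{\pi}\lambda_i s^2$; summing and using $\vol_{HT}(\M^{(1)})+\vol_{HT}(\M^{(2)})=\vol_{HT}(\K)=2\vol_{HT}(\M)$, I obtain $\vol_{HT}(\M)\geq\frac{1}{\pi}(\lambda_1+\lambda_2)s^2=\frac{2}{\pi}sh$, which is the desired systole-height inequality for~$(s,h)$ with ratio $(\lambda_1+\lambda_2)/2$. The main obstacle is the height bound for~$\M^{(2)}$: one must carefully rule out that a shorter non-boundary-homotopic arc could live mostly in the cylindrical collar~$C_b$, which relies on the fact that orientable loops of~$\M_b$ lying entirely in~$C_b$ become boundary-parallel once $C_b$ is glued to~$\M_a$ inside~$\M^{(2)}$.
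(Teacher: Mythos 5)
Your construction (double $\M$ to a Klein bottle and cut it asymmetrically into two Mobius bands whose heights are forced to be $\geq\lambda_1 s$ and $\geq\lambda_2 s$) is the same strategy as the paper's, which cuts along the balance curve $\{\lambda_2\,d(x,\sigma)=\lambda_1\,d(x,\sigma')\}$ instead of your distance-to-boundary level set; your height estimates are in the spirit of Lemma~\ref{case2h} and of the paper's bound~\eqref{eq:hi} and can be made rigorous. The genuine gap is the systole bound. The assertion ``both inclusions $\M^{(i)}\hookrightarrow\K$ induce $\pi_1$-isomorphisms, so $\sys(\M^{(i)})\geq\sys(\M)$'' is wrong twice over: the inclusion of a Mobius band into a Klein bottle is only $\pi_1$-injective, never an isomorphism, and even injectivity would only give $\sys(\M^{(i)})\geq\sys(\K)$, which is useless here because $\sys(\K)$ can be much smaller than $\sys(\M)$. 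Indeed, for a narrow band (take the flat example $\mathbb{R}\times[-\epsilon,\epsilon]$ with the sup-norm modulo $(x,y)\mapsto(x+\pi,-y)$, so $\sys(\M)=\pi$, $h(\M)=2\epsilon$), the doubled Klein bottle contains noncontractible ``vertical'' loops of length $4\epsilon\ll\sys(\M)$; since $\lambda<1$ is exactly the regime of interest, this cannot be dismissed. For $\M^{(1)}$ the fix is immediate: the inclusion $\M^{(1)}\subset\M_b$ is a homotopy equivalence, so $\sys(\M^{(1)})\geq\sys(\M_b)=\sys(\M)$. For $\M^{(2)}=\M_a\cup C_b$, however, a noncontractible loop may wander through the collar $C_b\subset\M_b$, and you need the folding argument the paper uses for its inequality~\eqref{eq:sys2} and in Lemma~\ref{lem:1/2}: reflect the pieces of the loop lying in $C_b$ into $\M_a$ by the soul-switching symmetry (the image of $C_b$ lies in $\M_a$), obtaining a loop of the same length in $\M_a$ freely homotopic to the original in $\M^{(2)}$, hence of length $\geq\sys(\M_a)=\sys(\M)$. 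Without some such argument the proof does not close.

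A smaller point: ``by scale invariance, normalize $\sys(\M)=s$ and $h(\M)=\lambda s$'' is not a legitimate reduction under Definition~\ref{def:K}, which only assumes the lower bounds $\sys(\M)\geq s$, $h(\M)\geq h$; scaling changes both quantities by the same factor, so you cannot force both equalities (this is precisely why the paper phrases the systole-height inequality with lower bounds). Fortunately your estimates only use the inequalities $\sys(\M)\geq s$ and $h(\M_a)=h(\M_b)\geq\lambda s$, so this is a cosmetic slip; once the systole bound for $\M^{(2)}$ is repaired as above, the summation $\vol_{HT}(\M^{(1)})+\vol_{HT}(\M^{(2)})=2\vol_{HT}(\M)$ gives the desired conclusion exactly as in the paper.
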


\begin{proof}
Let $\M$ be a Finsler Mobius band with $\sys(\M) \geq s$ and $h(\M) \geq h$, where $\frac{h}{s}=\frac{\lambda_1+\lambda_2}{2}$.
Consider the Klein bottle~$\K$ made of two isometric Mobius bands $\M$ and~$\M'$ with souls $\sigma$ and~$\sigma'$ defined in Definition~\ref{def:K}.
Consider also
\[
\M_1 = \{ x \in \K \mid \lambda_2 \, d(x,\sigma) \leq \lambda_1 \, d(x,\sigma') \}
\]
and 
\[
\M_2 = \{ x \in \K \mid \lambda_2 \, d(x,\sigma) \geq \lambda_1 \, d(x,\sigma') \}
\]
Note that if we drop the multiplicative constants $\lambda_1$ and~$\lambda_2$ in the definitions of~$\M_1$ and~$\M_2$, we obtain $\M$ and~$\M'$.

\medskip

The subset $\M_1$ is a Finsler Mobius band contained in~$\M$.
Similarly, the subset $\M_2$ is a Finsler Mobius band containing~$\M'$.
Observe also that the Mobius bands $\M_1$ and~$\M_2$ cover~$\K$ and that their interiors are disjoint.

Every point $z \in \partial \M_i$ satisfies the equality
\begin{equation} \label{eq:21}
\lambda_2 \, d(z,\sigma_1) = \lambda_1 \, d(z,\sigma_2).
\end{equation}
By symmetry, the distance between~$\sigma_1=\sigma$ and~$\sigma_2=\sigma'$ is equal to~$h(\M)$.
It follows by the triangle inequality that 
\begin{equation} \label{eq:12}
d(z,\sigma_1) + d(z,\sigma_2) \geq h(\M).
\end{equation}
As a result of the relations \eqref{eq:21} and~\eqref{eq:12}, we obtain
\begin{equation} \label{eq:zi}
d(z,\sigma_i) \geq \frac{\lambda_i}{\lambda_1 + \lambda_2} \, h(\M) .
\end{equation}

Now, let $\alpha$ be an arc of~$\M_i$ with endpoints $x,y \in \partial \M_i$ inducing a nontrivial class in~$\pi_1(\M_i,\partial \M_i)$.
As $\alpha$ intersects~$\sigma_i$, we deduce from \eqref{eq:zi} that
\[
\ell(\alpha) \geq d(x,\sigma_i) + d(y,\sigma_i) 
 \geq \frac{2 \lambda_i}{\lambda_1+\lambda_2} h(\M).
\]
Therefore, 
\begin{equation} \label{eq:hi}
h(\M_i) \geq \frac{2 \lambda_i}{\lambda_1+\lambda_2} h(\M) \geq \frac{2 \lambda_i}{\lambda_1+\lambda_2} h.
\end{equation}

\medskip

In another direction, we can also bound from below the systole of~$\M_1$ and~$\M_2$ as follows.

For the systole of~$\M_1$, since the inclusion $\M_1 \subset \M$ induces a $\pi_1$-isomorphism, we derive 
\begin{equation} \label{eq:sys1}
\sys(\M_1) \geq \sys(\M) \geq s.
\end{equation}
Note that the first inequality may be strict as the inclusion $\M_1 \subset \M$ is strict.

For the systole of~$\M_2$, we argue as in Lemma~\ref{lem:1/2}.
Let $\alpha$ be a noncontractible loop of~$\M_2$.
Decompose $\alpha$ into two parts $a=\alpha \cap \M$ and $a'=\alpha \cap \M'$ with $\alpha=a \cup a'$.
The union $\bar{\alpha}=a^* \cup a'$, where $a^*$ is the image of~$a$ by the soul-switching symmetry of~$\K$, forms a closed curve of length~$\ell(\alpha)$ lying in~$\M'$ and homotopic to~$\alpha$ in~$\M_2$.
Hence,
\begin{equation} \label{eq:sys2}
\sys(\M_2) \geq \sys(\M) \geq s.
\end{equation}

\medskip

The systole-height inequality on the Mobius band is satisfied for~$(s,\frac{2 \lambda_i}{\lambda_1+\lambda_2} h)$ from the lemma assumption since $\frac{2 \lambda_i}{\lambda_1 + \lambda_2} \frac{h}{s} = \lambda_i$.
From the bounds~\eqref{eq:hi}, \eqref{eq:sys1} and~\eqref{eq:sys2}, this inequality applies to~$\M_i$ and yields
\begin{equation} \label{eq:lambdai}
\vol_{HT}(\M_i) \geq \frac{4}{\pi} \, \frac{\lambda_i}{\lambda_1+\lambda_2} \, sh.
\end{equation}

Finally, recall that the Mobius bands $\M_1$ and~$\M_2$ cover~$\K$ and that their interiors are disjoint.
By adding up~\eqref{eq:lambdai} for $i=1,2$, we conclude that
\begin{align*}
2 \, \vol_{HT}(\M) & = \vol_{HT}(\K) \\
& = \vol_{HT}(\M_1) + \vol_{HT}(\M_2) \geq \frac{4}{\pi} \, sh
\end{align*}
Hence the result.
\end{proof}

\begin{remark}
At first glance, it seems more natural to assume that $\sys(\M)=s$ and $h(\M)=h$ in the definition of the systole-height inequality, \cf~Definition~\ref{def:K}.
Observe that the proof of Lemma~\ref{lem:1/2} carries over with this alternative notion.
However, we have not been able to directly prove a result similar to Lemma~\ref{lem:(x+y)/2} with this more restrictive notion.
The reason is that the inequality~\eqref{eq:sys1}, namely $\sys(\M_1) \geq \sys(\M)$, may be strict as the inclusion $\M_1 \subset \M$ is strict.
To get around this subtle difficulty, we relaxed the original definition and formulated the systole-height inequality in terms of lower bounds for the systole and the height of the Mobius band.
%
\end{remark}

We can now proceed to the proof of Proposition~\ref{prop:narrow}.

\begin{proof}[Proof of Proposition~\ref{prop:narrow}]
By Lemma \ref{lem:1/2}, for every nonnegative integer~$k$, the systole-height inequality on the Mobius band is satisfied for~$(s,h)$ with $\frac{h}{s} = \frac{1}{2^k}$. 
Combined with Lemma~\ref{lem:(x+y)/2}, this implies that the systole-height inequality is satisfied for every~$(s,h)$ where $\frac{h}{s}$ is a dyadic rational of~$(0,1)$.
Since the height, the systole and the volume are continuous over Finsler metrics, the result follows from the density of the dyadic rationals in~$[0,1]$.
\end{proof}

\forget
\begin{remark}
(The inequality $\sys(\M_1) \geq \sys(\M)$ may be strict as the inclusion $\M_1 \subset \M' \simeq \M$ is strict.
This is the reason why we formulated the $(s,h)$-systole-height inequality in terms of lower bounds for the systole and the height.)

Similarly, the subset $\M_2$ is a Finsler Mobius band containing in~$\M$ with the same soul~$\sigma$ as~$\M$.
Furthermore, $\sys(\M_2) \geq \sys(M) \geq s$ (in this case, we can show that $\sys(M_2) = \sys(\M)$ as in Lemma~\ref{lem:1/2}) and $h(\M_1) = \frac{2 \lambda_2}{\lambda_1+\lambda_2} h(\M) \geq \frac{2 \lambda_2}{\lambda_1+\lambda_2} h$.
\end{remark}

We can now proceed to the proof of Proposition~\ref{prop:narrow}.

\begin{proof}[Proof of Proposition~\ref{prop:narrow}]

In particular, for every nonnegative integer~$k$, the systole-height inequality on the Mobius band is satisfied for~$(s,h)$ with $\frac{h}{s} = \frac{1}{2^k}$.

\end{proof}
\forgotten

\section{Systolic inequality on Finsler Klein bottles} \label{sec:klein}
In this section, we show that the systolic area of Finsler Klein bottles with soul, soul-switching or rotational symmetries is equal to $\frac{2}{\pi}$. 

\begin{definition} \label{def:sym}
Recall that every Riemannian Klein bottle is conformally equivalent to the quotient of~$\mathbb{R}^2$ by the isometry group~$G$ generated by the glide reflection $(x,y)\mapsto (x+\pi,-y)$ and the vertical translation $(x,y)\mapsto (x,y+2b)$ with $b>0$.

The flat Klein bottle~$\K=\mathbb{R}^2/G$ decomposes into two Mobius bands whose souls correspond to the projections of the lines $\{y=0\}$ and~$\{y=b\}$.
The boundary of the two Mobius bands agrees with the projection of the line $\{y=\frac{b}{2} \}$ (or $\{y=-\frac{b}{2}$).
A Finsler metric on $\K$ has a \emph{soul symmetry} if its lift to $\mathbb{R}^2$ is invariant by the map $(x,y)\mapsto(x,-y)$.
Similarly, a Finsler metric on $\K$ has a \emph{soul-switching symmetry} if its lift to $\mathbb{R}^2$ is invariant by the map $(x,y)\mapsto(x,b-y)$.
Finally, a Finsler metric on $\K$ has a \emph{rotational symmetry} if its lift to $\mathbb{R}^2$ is invariant by the map $(x,y)\mapsto(x+\theta,y)$ for every $\theta\in[0,2\pi]$.

These definitions are consistent with the notions introduced in~\ref{sec:narrow}.
\end{definition} 

In 1986, C. Bavard established an optimal isosystolic inequality for Riemannian Klein bottles, \cf~\cite{Ba86}. 
Alternative proofs can be found in \cite{Sak88, Ba88, Ba06}. 
All the proofs are based on the uniformization theorem. 
In fact, as mentioned in the introduction, the problem boils down to consider Riemannian Klein bottles invariant under soul (and rotational) symmetry. 
These Klein bottles are made of two isometric copies of Riemannian Mobius bands. 
Thus, in the end, the systolic inequality on Riemannian Klein bottles follows from optimal systolic inequalities on Riemannian Mobius bands, \cf~\cite{Pu52, Bl62}. \\

There is no known optimal isosystolic inequality on the Klein bottle for Finsler metrics. However, we obtain the following partial result similar to the Riemannian case.
Note that in the Riemannian case, the hypothesis is automatically satisfied by an average argument.

\begin{theorem}\label{finslerkleinbottle2}
Let $\K$ be a Finsler Klein bottle with a soul, soul-switching or rotational symmetry. Then
\begin{equation} \label{k2}
\frac{\vol_{HT}(\K)}{\sys^2(\K)}\geq \frac{2}{\pi}.
\end{equation}
Moreover, the inequality is optimal.
\end{theorem}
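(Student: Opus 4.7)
The plan is to reduce each of the three symmetry cases to Theorem~\ref{finslermobius} applied to an auxiliary Finsler Mobius band derived from~$\K$. A preliminary observation will streamline matters: a rotational symmetry automatically entails soul symmetry, since rotational invariance forces the lift $F(x,y;\xi,\eta)$ to be independent of~$x$, and then the built-in glide-reflection compatibility $F(x,y;\xi,\eta)=F(x+\pi,-y;\xi,-\eta)$ reduces to $F(y;\xi,\eta)=F(-y;\xi,-\eta)$, which is exactly the soul-symmetry condition. Thus it suffices to treat the soul-switching and the soul-symmetric cases.

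For the soul-switching case, I would mimic the doubling construction of Lemma~\ref{lem:1/2}. Write $\K=\M\cup\M'$ as the union of two Mobius bands exchanged by the symmetry (hence isometric) glued along their common boundary, and cut $\K$ open along the soul~$\sigma'$ of~$\M'$. The resulting Finsler Mobius band~$2\M$ satisfies $\vol_{HT}(2\M)=\vol_{HT}(\K)$, $\sys(2\M)=\sys(\M)$ and $h(2\M)=2h(\M)$ by arguments identical to those in Lemma~\ref{lem:1/2}. On the other hand, $\sys(\K)\leq\sys(\M)$ because the inclusion $\M\hookrightarrow\K$ is $\pi_1$-injective, and $\sys(\K)\leq 2h(\M)$ because a height arc of~$\M$ together with its image under the soul-switching symmetry forms a noncontractible loop of~$\K$. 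Applying Theorem~\ref{finslermobius} to~$2\M$ with parameter $\lambda':=2h(\M)/\sys(\M)$ and splitting into the cases $\lambda'\leq 1$ and $\lambda'>1$ yields $\vol_{HT}(\K)\geq\tfrac{2}{\pi}\sys(\K)^2$ by direct computation in each branch.

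For the soul-symmetric case, I would write $\K=\M_1\cup\M_2$ as the union of two (possibly non-isometric) Mobius bands glued along $\partial\M_1=\partial\M_2$, and set $s_i:=\sys(\M_i)$, $h_i:=h(\M_i)$. Applying Theorem~\ref{finslermobius} to each~$\M_i$ gives the uniform bound $\vol_{HT}(\M_i)\geq\tfrac{s_i}{\pi}(h_i+\min\{h_i,s_i\})$. The soul involution~$s$ restricts on each~$\M_i$ to an isometric reflection fixing the soul~$\sigma_i$ pointwise, and, crucially, to a common involution of the shared boundary $\partial\M_1=\partial\M_2$. A symmetrization argument should produce, for each~$i$, an $s$-symmetric length-minimizing height arc~$\alpha_i$ with endpoints $p_i$ and $s(p_i)$; taking $p_2=s(p_1)$ matches endpoints, and the concatenation $\alpha_1*\alpha_2$ is a noncontractible loop of~$\K$ (homotopic to the two-sided generator of $\pi_1(\K)$) of length $h_1+h_2$. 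Combined with the trivial bounds $\sys(\K)\leq s_i$, this gives $\sys(\K)\leq\min\{s_1,s_2,h_1+h_2\}$. A short case analysis depending on whether $h_i\leq s_i$ or $h_i>s_i$ for each~$i$ then confirms the elementary inequality
\[
s_1(h_1+\min\{h_1,s_1\})+s_2(h_2+\min\{h_2,s_2\})\geq 2\sys(\K)^2,
\]
so that $\vol_{HT}(\K)=\vol_{HT}(\M_1)+\vol_{HT}(\M_2)\geq\tfrac{2}{\pi}\sys(\K)^2$ as required.

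The main obstacle lies in the soul-symmetric case: rigorously establishing the existence of an $s$-symmetric length-minimizing height arc starting at a prescribed boundary point, so that the concatenation closes up and represents the two-sided generator, requires some care in the Finsler category. The concluding case analysis, while elementary, must cover all four regimes of $(h_1,h_2)$ versus $(s_1,s_2)$ and crucially use both $\sys(\K)\leq\min\{s_1,s_2\}$ and $\sys(\K)\leq h_1+h_2$. Optimality in all three cases is witnessed by the extremal family described in Example~\ref{ex:extremal2}.
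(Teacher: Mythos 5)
Your soul-switching argument is sound and is essentially the paper's: cutting $\K$ along the soul of the mirror copy produces the very Mobius band the paper works with, and your bounds $\sys(\K)\leq\sys(\M)$, $\sys(\K)\leq 2h(\M)$ followed by the two-branch application of Theorem~\ref{finslermobius} reproduce Lemmas~\ref{kcase1} and~\ref{kcase4}. Your observation that a rotational symmetry forces a soul symmetry (rotational invariance makes the lift $x$-independent, and the glide-reflection compatibility then becomes the soul-symmetry identity) is also correct, and is a genuine shortcut compared with the paper, which treats the rotational case separately via the first variation formula.

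The gap is in the soul-symmetric case. First, you cannot ``take $p_2=s(p_1)$'': the symmetrization argument produces, for each $i$, a symmetric minimizing height arc of $\M_i$ whose endpoints are dictated by the metric, and the minimizers of $\M_1$ and $\M_2$ need not be based at matching points of the common boundary; joining them along $\partial\M_1$ adds uncontrolled boundary length. Second, and more seriously, the inequality $\sys(\K)\leq h_1+h_2$ that your concluding case analysis requires (it is indispensable in the regime $h_1,h_2<\sys(\K)$) is false for general soul-symmetric Finsler Klein bottles. For instance, take a conformally flat metric, invariant under $(x,y)\mapsto(x,-y)$, which is very small on a thin vertical corridor crossing $\M_1$ near $x=0$ (hence also near $x=\pi$) and on a thin corridor crossing $\M_2$ near $x=\pi/2$ (hence also near $x=3\pi/2$), and very large elsewhere: then $h_1$ and $h_2$ are arbitrarily small, while every noncontractible loop must either wind in the $x$-direction or travel between the two corridors through the expensive region, so $\sys(\K)$ stays large. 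Thus $\vol_{HT}(\M_1)+\vol_{HT}(\M_2)\geq\frac{1}{\pi}\left(s_1(h_1+\min\{h_1,s_1\})+s_2(h_2+\min\{h_2,s_2\})\right)$ cannot be pushed to $\frac{2}{\pi}\sys^2(\K)$ along your lines. The paper avoids this by not splitting $\K$ into the two halves: it cuts $\K$ along one soul only, obtaining a single Mobius band $\M$, and uses the soul symmetry to show that a height arc of $\M$ is itself symmetric; its two endpoints on $\partial\M$ are exactly the pair identified when regluing $\K$, so the arc closes up to a noncontractible loop of $\K$, giving $h(\M)\geq\sys(\K)$. With $\sys(\M)\geq\sys(\K)$ this feeds into the same two-branch computation you carried out in the soul-switching case, now applied to $\M$, and yields~\eqref{k2}. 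Repairing your version would require precisely such a statement about $h(\M)$ for the uncut band (or full rotational invariance, under which endpoints can indeed be matched), not about $h_1+h_2$.
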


Let $\K=\mathbb{R}^2/G$ be a Finsler Klein bottle (with or without symmetry). Denote by $\M$ the Finsler Mobius band obtained by cutting $\K$ open along the soul given by the projection of the line $\left\{y=b\right\}$ to $\K$. 
The proof of the inequality~\eqref{k2} in Theorem \ref{finslerkleinbottle2} follows by combining the next three lemmas.

\begin{lemma}\label{kcase1}
If $h(\M)\geq \sys(\K)$ then the inequality~\eqref{k2} holds true.
\end{lemma}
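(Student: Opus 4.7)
The plan is to reduce this lemma to Theorem~\ref{finslermobius} applied to the Mobius band~$\M$. Two straightforward comparisons do the bulk of the work. First, cutting along a one-dimensional subset preserves Holmes--Thompson volume, so $\vol_{HT}(\K)=\vol_{HT}(\M)$. Second, the natural map $\M\to\K$ is a local isometry away from the cut, and the induced map $\pi_1(\M)\to\pi_1(\K)$ sends the generator of~$\pi_1(\M)\cong\mathbb{Z}$ (represented by the other soul) to a nontrivial element of~$\pi_1(\K)$; hence every noncontractible loop in~$\M$ yields a noncontractible loop of the same length in~$\K$, and $\sys(\M)\geq\sys(\K)$. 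Together with the hypothesis $h(\M)\geq\sys(\K)$, this provides the simultaneous lower bounds $\sys(\M)\geq\sys(\K)$ and $h(\M)\geq\sys(\K)$.

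Next I would apply Theorem~\ref{finslermobius} in the two regimes of $\lambda:=h(\M)/\sys(\M)$. In the narrow regime $\lambda\in(0,1]$, the theorem gives
$$
\vol_{HT}(\M)\geq \frac{2}{\pi}\,\sys(\M)\,h(\M)\geq \frac{2}{\pi}\,\sys^2(\K).
$$
In the wide regime $\lambda>1$, it gives
$$
\vol_{HT}(\M)\geq \frac{1}{\pi}\,\sys(\M)\,\bigl(\sys(\M)+h(\M)\bigr)\geq \frac{1}{\pi}\,\sys(\K)\cdot 2\sys(\K)=\frac{2}{\pi}\,\sys^2(\K),
$$
using $\sys(\M)+h(\M)\geq 2\sys(\K)$. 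In either case, combining with $\vol_{HT}(\K)=\vol_{HT}(\M)$ yields~\eqref{k2}.

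There is no real obstacle here: the whole argument is a short calculation once one has the two comparisons between~$\M$ and~$\K$. These in turn come from the topology of the Klein bottle, namely that the cut soul is a one-sided simple closed curve, so the resulting surface is connected with a single boundary component (\ie\ a Mobius band) and its fundamental group injects into~$\pi_1(\K)$. The hypothesis $h(\M)\geq\sys(\K)$ is precisely what is needed to ensure that both factors appearing in the bound of Theorem~\ref{finslermobius} can be replaced by~$\sys(\K)$ regardless of the regime of~$\lambda$.
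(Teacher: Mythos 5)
Your proof is correct and follows essentially the same route as the paper: cut $\K$ along a soul, observe that $\vol_{HT}(\K)=\vol_{HT}(\M)$ and $\sys(\M)\geq\sys(\K)$, then apply the two cases of Theorem~\ref{finslermobius} together with the hypothesis $h(\M)\geq\sys(\K)$ to get $\frac{2}{\pi}\sys^2(\K)$ in both regimes. The only cosmetic difference is that the paper keeps the ratios $\frac{h(\M)}{\sys(\K)}$ and $\frac{\sys(\M)}{\sys(\K)}$ explicit in the wide case rather than using $\sys(\M)+h(\M)\geq 2\sys(\K)$, which is the same estimate.
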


\begin{proof}
The inclusion $\M\subset\K$ induces a $\pi_1$-isomorphism. Hence, $\sys(\K)\leq \sys(\M)$.
Now, we only have two cases to consider.

First, if $h(\M)\leq \sys(\M)$, then we deduce from Theorem \ref{finslermobius}, first case, that
\begin{align*}
\frac{\vol_{HT}(\K)}{\sys^{2}(\K)}
&\geq\frac{\vol_{HT}(\M)}{\sys(\M) \, h(\M)}\\
&\geq\frac{2}{\pi}.
\end{align*}

Second, if $h(\M)\geq \sys(\M)$, then we deduce from Theorem~\ref{finslermobius}, second case, that
\begin{align*}
\frac{\vol_{HT}(\K)}{\sys^{2}(\K)}
&\geq\frac{1}{\pi}\left(1+\frac{\sys(\M)}{h(\M)}\right)\frac{\sys(\M) \, h(\M)}{\sys^2(\K)}\\
&\geq \frac{1}{\pi}\left(\frac{h(\M)}{\sys(\K)}+\frac{\sys(\M)}{\sys(\K)}\right)\frac{\sys(\M)}{\sys(\K)}\\
&\geq\frac{2}{\pi},
\end{align*}
since both $\frac{h(\M)}{\sys(\K)}$ and $\frac{\sys(\M)}{\sys(\K)}$ are greater or equal to~$1$.
\end{proof}

The next three lemmas show that the assumption of Lemma~\ref{kcase1} is satisfied when the Finsler metric on~$\K$ has soul, soul-switching or rotational symmetries.

\begin{lemma}\label{kcase2}
If $\K$ is a Finsler Klein bottle with a soul symmetry then $h(\M)\geq \sys(\K)$.
\end{lemma}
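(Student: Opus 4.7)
The plan is to exploit the soul symmetry $\sigma:(x,y)\mapsto(x,-y)$, which by assumption descends to a Finsler isometry of~$\K$, and to double a height arc of~$\M$ into two noncontractible loops of~$\K$. The crucial observation is that $\sigma$, viewed as a map on~$\K$, fixes pointwise both the soul of~$\M$ --- the projection of~$\{y=0\}$, fixed since $\sigma(x,0)=(x,0)$ --- and the image of~$\partial\M$ in~$\K$ --- the projection of~$\{y=b\}$, fixed because $\sigma(x,b)=(x,-b)$ is identified with~$(x,b)$ via the translation $(x,y)\mapsto(x,y+2b)$ belonging to~$G$.

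Let $\gamma$ be a height arc of~$\M$ with endpoints $p_1,p_2\in\partial\M$, so $\ell(\gamma)=h(\M)$. First I would observe that $\gamma$ must meet the soul~$S$ of~$\M$; otherwise $\gamma$ would lie in the open annulus $\M\setminus S$, which deformation retracts onto~$\partial\M$, and so $\gamma$ would be homotopic rel endpoints to an arc in~$\partial\M$, contradicting its nontriviality in~$\pi_1(\M,\partial\M)$. Pick any $p\in\gamma\cap S$ and let $\gamma_1,\gamma_2$ denote the subarcs of~$\gamma$ from~$p_1$ to~$p$ and from~$p$ to~$p_2$. Viewing the $\gamma_i$ as curves in~$\K$ and using that $\sigma$ fixes $p_1$, $p_2$, $p$, the concatenations
\[
\alpha_1:=\gamma_1\cdot\sigma(\gamma_1)^{-1}\qquad\text{and}\qquad\alpha_2:=\gamma_2^{-1}\cdot\sigma(\gamma_2)
\]
are well-defined loops of~$\K$ based at~$p_1$ and~$p_2$ respectively, each of length~$2\ell(\gamma_i)$ since $\sigma$ is an isometry.

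The main step --- and the principal technical obstacle --- is to verify that $\alpha_1$ and $\alpha_2$ are noncontractible in~$\K$. The plan is to lift to the universal cover~$\R^2$ and compute the resulting deck transformation. A lift $\tilde\gamma_1$ of~$\gamma_1$ can be chosen to start at some point~$(x_1,-b)$ on the lower boundary line of the strip~$\R\times[-b,b]$; since any nontrivial class of~$\pi_1(\M,\partial\M)$ lifts to an arc running between the two boundary components $y=\pm b$, the endpoint of~$\tilde\gamma_1$ lies on the lifted soul, at some point~$(p_x,0)$. A compatible lift of~$\sigma(\gamma_1)^{-1}$ then continues from~$(p_x,0)$ to~$(x_1,b)$. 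Since $(x_1,b)$ is the image of~$(x_1,-b)$ under the translation $(x,y)\mapsto(x,y+2b)$, which is a nontrivial element of~$G=\pi_1(\K)$, the loop~$\alpha_1$ is noncontractible. An identical analysis handles~$\alpha_2$, which represents the inverse class.

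The conclusion is then immediate: $\sys(\K)\leq\ell(\alpha_i)=2\ell(\gamma_i)$ for $i=1,2$, and adding these two inequalities yields $\sys(\K)\leq\ell(\gamma_1)+\ell(\gamma_2)=\ell(\gamma)=h(\M)$, as required.
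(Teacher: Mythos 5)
Your proof is correct, but it takes a genuinely different route from the paper's. The paper symmetrizes the height arc itself: it writes $\gamma=\alpha\cup\beta$ with $\alpha,\beta$ joining $\partial\M$ to the soul, observes that $\alpha\cup\alpha^*$ is no longer than $\gamma$ and lies in the same relative class, concludes by minimality that it is length-minimizing, hence geodesic, and then invokes uniqueness of geodesics (through the common subarc $\alpha$) to get $\gamma=\alpha\cup\alpha^*$; the $\sigma$-invariant arc $\gamma$ then closes up in $\K$ to a noncontractible loop of length $h(\M)$. You bypass all of this: you never need the height arc to be symmetric, or even geodesic --- you reflect \emph{each} half across the soul symmetry to get two loops $\alpha_1,\alpha_2$ of $\K$ of lengths $2\ell(\gamma_1)$, $2\ell(\gamma_2)$ and average the two systolic bounds (equivalently, you could just take the shorter half, which is essentially the paper's auxiliary arc $\alpha\cup\alpha^*$ used directly). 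What each approach buys: yours avoids the geodesic-uniqueness step and, unlike the paper, makes the noncontractibility completely explicit by exhibiting the deck transformation $(x,y)\mapsto(x,y+2b)$; the paper's argument yields the extra structural fact that the height arc is itself invariant under the soul symmetry. Two small points in your write-up: the reason the endpoint of $\tilde\gamma_1$ lies at $(p_x,0)$ is simply that $p$ lies on the soul and the lift chosen inside the strip $\R\times[-b,b]$ meets the soul's preimage only along $\{y=0\}$ --- it does not follow directly from the statement that nontrivial relative classes lift across the strip (that fact is what you correctly use earlier, via the retraction of $\M\setminus S$ onto $\partial\M$, to know $\gamma$ meets the soul); and even if one allowed an arbitrary lift of $p$ on a line $\{y=2kb\}$, the resulting deck transformation would be the odd power $t^{2k+1}$, so the noncontractibility is robust. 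Neither point affects the validity of the argument.
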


\begin{proof}
Observe that the soul symmetry of $\K$ leaves both $\M$ and $\partial\M$ invariant. 
Given an arc $\alpha$ of $\M$, we denote by $\alpha^*$ the arc of~$\M$ symmetric to~$\alpha$ by the soul symmetry. 
Let $\gamma$ be a height arc of~$\M$, \cf~Definition~\ref{heightdefinition}.
This arc decomposes into two subarcs $\alpha$ and~$\beta$ connecting $\partial\M$ to the soul of~$\M$ with $\ell(\alpha)\leq \ell(\beta)$. 
The arc $\alpha\cup\alpha^*$ with endpoints in~$\partial\M$ induces a nontrivial class in $\pi_1(\M,\partial\M)$ of length at most the length of $\gamma=\alpha\cup\beta$. 
By definition of $h(\M)$, we conclude that $\alpha\cup\alpha^*$ is as long as $\gamma$ and so is length-minimizing in its relative homotopy class.
In particular, it is geodesic. 
Thus, the arc $\gamma$, which has the subarc~$\alpha$ in common with~$\alpha \cup \alpha^*$, agrees with $\alpha\cup\alpha^*$.
In particular, it is invariant by the soul symmetry. 
As a result, the arc $\gamma$ induces a noncontractible loop on $\K$ after identification of the points of $\partial\M$ under the soul symmetry.
Hence, $\ell(\alpha) \geq \sys(\K)$.
\end{proof}

\begin{lemma}\label{kcase4}
If $\K$ is a Finsler Klein bottle with a soul-switching symmetry then $h(\M)\geq \sys(\K)$.
\end{lemma}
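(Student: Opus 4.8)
The plan is to mimic the argument of Lemma~\ref{kcase2}, but now using the soul-switching symmetry of~$\K$, which is the isometry induced by $(x,y) \mapsto (x, b-y)$ on the lift to~$\R^2$. First I would recall that $\M$ is the Mobius band obtained by cutting $\K$ open along the soul $\sigma'$ corresponding to the line $\{y=b\}$, so that $\partial \M$ is the curve corresponding to $\{y = \tfrac{b}{2}\}$ and the soul of $\M$ itself is $\sigma = \{y=0\}$. The soul-switching symmetry~$\iota$ of~$\K$ fixes $\partial \M$ pointwise (it fixes the line $\{y=\tfrac{b}{2}\}$) and swaps the two souls; when we cut along $\sigma'$, the map $\iota$ restricts to an involution of $\M$ that fixes $\partial \M$ pointwise and sends the soul $\sigma$ of $\M$ to (the boundary coming from) $\sigma'$. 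Equivalently, $\iota$ reflects $\M$ across a curve parallel to $\partial\M$ sitting between $\sigma$ and $\partial\M$.

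Next I would take a height arc $\gamma$ of $\M$, \cf~Definition~\ref{heightdefinition}, with $\ell(\gamma) = h(\M)$. Since $\gamma$ represents a nontrivial class in $\pi_1(\M,\partial\M)$, it must cross the soul $\sigma$ of $\M$, hence also (by $\iota$-symmetry of $\M$ as a set and a continuity/intermediate-value argument) the fixed locus of $\iota$ inside $\M$. Pick a point $p \in \gamma$ on that fixed locus and split $\gamma = \alpha \cup \beta$ at $p$, with $\ell(\alpha) \le \ell(\beta)$, so $\ell(\alpha) \le \tfrac12 h(\M)$. Form the arc $\alpha \cup \iota(\alpha)$: it has both endpoints on $\partial\M$, is homotopic (rel $\partial\M$) to $\gamma$ because $\iota$ fixes $\partial\M$ and the relevant relative homotopy class is determined by the crossing of $\sigma$, and has length $2\ell(\alpha) \le \ell(\gamma) = h(\M)$. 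By minimality of $\gamma$ it follows that $\ell(\alpha \cup \iota(\alpha)) = h(\M)$, hence $\alpha\cup\iota(\alpha)$ is itself a length-minimizing arc in its class, hence geodesic, and since it shares the subarc $\alpha$ with $\gamma$ and both are geodesics, $\gamma = \alpha \cup \iota(\alpha)$ is $\iota$-invariant. In particular $\ell(\alpha) = \ell(\beta) = \tfrac12 h(\M)$.

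Finally, I would observe that because $\gamma$ is $\iota$-invariant and $\iota$ descends to the identity on $\partial\M$, the image of $\gamma$ in $\K$ (recall $\K$ is recovered from $\M$ by gluing along the soul-switching identification, or rather $\gamma$ descends through the cut) closes up to a loop in~$\K$. More precisely, the endpoints of $\gamma$ lie on $\partial\M = \sigma'$, and in $\K$ the arc $\gamma$ together with the fact that it meets $\sigma$ transversally yields a noncontractible loop; a cleaner route is to note $\iota(\alpha)$ and $\alpha$ glue along $\partial\M$ to give, after passing to~$\K$, a loop whose class generates $\pi_1(\K)/[\text{torsion}]$ — in any case it is noncontractible in $\K$. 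Hence $h(\M) = \ell(\gamma) = 2\ell(\alpha) \ge \ell(\gamma_{\K}) \ge \sys(\K)$, possibly after checking the constant carefully. The main obstacle I anticipate is precisely this last bookkeeping step: verifying that the $\iota$-invariant arc $\gamma$ really does descend to (or produce) a \emph{noncontractible} loop in $\K$ — one must rule out that it bounds, and one must make sure the length comparison gives $h(\M) \ge \sys(\K)$ rather than $\tfrac12 h(\M) \ge \sys(\K)$ by correctly identifying which loop in $\K$ the construction yields. The symmetry and geodesic-rigidity parts are routine given Lemma~\ref{kcase2} as a template.
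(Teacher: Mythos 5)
Your plan breaks down at the very first step, namely the identification of $\partial\M$. In the paper's setup (stated just before Lemma~\ref{kcase1}), $\M$ is obtained by cutting $\K$ open along the soul $\sigma'$ given by the projection of the line $\{y=b\}$, so $\partial\M$ is the circle which \emph{double covers} $\sigma'$; the projection of $\{y=\tfrac{b}{2}\}$ is not $\partial\M$ but an interior circle of $\M$: it is the fixed-point circle of the soul-switching symmetry $\iota$ and the common boundary of the two Mobius bands $\M_1,\M_2$ into which $\K$ decomposes. Consequently $\iota$ does \emph{not} restrict to an involution of $\M$ fixing $\partial\M$ pointwise: since $\iota$ swaps $\sigma$ and $\sigma'$, it does not preserve the cut locus, hence does not even descend to a self-map of the cut-open surface $\M$, and on $\K$ it sends the curve covered by $\partial\M$ onto $\sigma$. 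This destroys your key step: if $\alpha$ is the piece of a height arc of $\M$ running from a boundary point $x$ (lying over $\sigma'$) to a point $p$ of the fixed circle, then $\iota(\alpha)$ ends at $\iota(x)\in\sigma$, an interior point of $\M$, so $\alpha\cup\iota(\alpha)$ has only one endpoint on $\partial\M$; it is neither a competitor for $h(\M)$ in $\pi_1(\M,\partial\M)$ nor a closed loop in $\K$. The subsequent rigidity conclusion (``$\gamma$ is $\iota$-invariant and descends to a noncontractible loop of $\K$'') therefore has no content, and the final length bookkeeping that you yourself flagged as the delicate point is exactly where the argument collapses. The template of Lemma~\ref{kcase2} does not transfer, because there the symmetry preserves $\M$ and fixes its soul pointwise, so reflecting half a height arc keeps both endpoints on $\partial\M$, whereas here the pointwise-fixed circle $\{y=\tfrac{b}{2}\}$ separates $\sigma$ from $\sigma'$ and the symmetry exchanges the two sides.

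The paper argues the other way around: it works with the half band $\M_1$ bounded by the fixed circle. If $\gamma$ is a height arc of $\M_1$, its mirror image $\gamma^*=\iota(\gamma)\subset\M_2$ has the \emph{same} endpoints, because $\iota$ fixes $\partial\M_1$ pointwise; hence $\eta=\gamma\cup\gamma^*$ is a closed noncontractible loop in $\K$ of length $2h(\M_1)$, which, after cutting along $\sigma'$, is also an essential arc of $(\M,\partial\M)$. Combining this with the symmetry relation $h(\M)=2h(\M_1)$ gives $h(\M)=\ell(\eta)\geq\sys(\K)$. To repair your proof you would have to reflect arcs across the circle $\partial\M_1$ in this manner, not across a symmetry of $\M$ that does not exist.
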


\begin{proof}
Since the Finsler Klein bottle $\K$ has a soul-switching symmetry, we can assume that it is composed of two isometric Finsler Mobius bands $\M_1$ and $\M_2$. By symmetry, we have $h(\M)=2h(\M_1)$. Given an arc $\alpha$ of $\M_1$, we denote by $\alpha^*$ the arc of~$\M_2$ symmetric to~$\alpha$ by the soul-switching symmetry. 
Let $\gamma$ be a height arc of~$\M_1$.
This arc decomposes into two subarcs $\alpha_1$ and~$\beta_1$ connecting $\partial\M_1$ to the soul of~$\M_1$. 
The arc $\eta=:\alpha_1\cup\alpha_1^*\cup\beta_1\cup\beta_1^*$, \ie, the union of the arc $\gamma$ and its symmetric image $\gamma^*$, has endpoints in~$\partial\M$ and induces a nontrivial class in $\pi_1(\M,\partial\M)$ of length equal to $2h(\M_1)$. 
Moreover, this arc $\eta$ induces a noncontractible loop on $\K$ after identification of the points of $\partial\M$ under the soul-switching symmetry. 
We conclude that $h(\M)\geq \sys(\K)$.
 \end{proof}

\begin{lemma}\label{kcase3}
If $\K$ is a Finsler Klein bottle with rotational symmetry then $h(\M)\geq \sys(\K)$.
\end{lemma}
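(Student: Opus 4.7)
The plan is to exhibit, for any rotationally symmetric Finsler Klein bottle, an explicit noncontractible loop of $\K$ whose length is bounded above by $h(\M)$, by working in the universal cover $\mathbb{R}^2$. Rotational invariance forces the lifted Finsler metric $F$ to depend only on $y$, and the $\tau_1$-invariance built into the Klein bottle structure (since $\K = \mathbb{R}^2/G$) then gives the reflection-type identity $F(y;v_x,v_y) = F(-y;v_x,-v_y)$; combined with reversibility $F(-v)=F(v)$, this yields $F(y;v_x,1) = F(-y;-v_x,1)$.

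Set $g^*(y) := \min_{v_x \in \mathbb{R}} F(y;v_x,1)$, the minimum Finsler length over vectors with unit vertical component. Positive homogeneity and reversibility give the pointwise inequality
\[
F(y;v_x,v_y) \geq |v_y|\,g^*(y).
\]
Now, $\M$ has universal cover the strip $S = \mathbb{R}\times[-b,b]$ with deck group $\langle\tau_1\rangle$. A height arc of $\M$ lifts to an arc of $S$ whose endpoints lie on different components of $\partial S = \{y=-b\}\sqcup\{y=b\}$; otherwise, since $S$ is convex, a straight-line homotopy would deform the lift (rel endpoints) into $\partial S$, yielding a rel-boundary homotopy in $\M$ between the height arc and a boundary arc, contradicting non-triviality in $\pi_1(\M,\partial\M)$. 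Integrating the pointwise inequality along the lifted height arc then gives
\[
h(\M) \geq \int_{-b}^{b} g^*(y)\,dy.
\]

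For the matching bound on $\sys(\K)$, I build an explicit noncontractible loop. The identity $F(y;v_x,1) = F(-y;-v_x,1)$ shows that $g^*$ is even and, by quadratic convexity, the unique minimizer $v_x^*(y)$ of $F(y;\cdot,1)$ satisfies $v_x^*(-y) = -v_x^*(y)$. Define $\gamma(y) = (x(y),y)$ on $[-b,b]$ by $x(-b)=0$ and $x'(y) = v_x^*(y)$. By oddness, $x(b) = \int_{-b}^{b} v_x^*(y)\,dy = 0$, so $\gamma$ runs from $(0,-b)$ to $(0,b) = \tau_2(0,-b)$ and descends to a closed loop in $\K$ in the homotopy class of $\tau_2 \in \pi_1(\K)$, which is noncontractible. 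Its length equals $\int_{-b}^{b} F(y;v_x^*(y),1)\,dy = \int_{-b}^{b} g^*(y)\,dy$. Chaining the two estimates,
\[
\sys(\K) \leq \int_{-b}^{b} g^*(y)\,dy \leq h(\M),
\]
as required.

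The main technical point is the oddness of $v_x^*$: it is precisely what makes the minimal-slope-at-each-height path close up in $\K$ without extra soul arcs that would spoil the bound. This oddness hinges on the interplay between rotational symmetry and the $\tau_1$-symmetry built into the Klein bottle, which is why the argument is specific to the rotationally symmetric case and does not immediately adapt to a general Finsler Klein bottle.
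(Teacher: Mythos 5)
Your proof is correct, but it takes a genuinely different route from the paper. The paper works directly with a height arc $\gamma$: by the first variation formula $\gamma$ is a geodesic hitting $\partial\M$ perpendicularly at both (distinct) endpoints, and a rigidity argument with the rotations ($\gamma^\theta(s)=\gamma(1-s)$, hence $\gamma^{2\theta}=\gamma$, hence $\theta=\pi$) shows that $\gamma$ itself closes up to a noncontractible loop of $\K$ of length $h(\M)$, so $h(\M)\geq\sys(\K)$ with no intermediate quantity. You instead never use any regularity of the height arc: you prove the calibration-type lower bound $h(\M)\geq\int_{-b}^{b}g^*(y)\,dy$ with $g^*(y)=\min_{v_x}F(y;v_x,1)$ (the lifting argument showing a nontrivial relative arc must cross the strip is correct), and you build an explicit comparison loop in the free homotopy class of the vertical translation $\tau_2$ whose length is exactly $\int_{-b}^{b}g^*(y)\,dy$; the closing-up is guaranteed by the oddness of the minimizing slope $v_x^*$, which you correctly extract from rotational invariance plus invariance under the glide reflection plus reversibility (the identity $F(y;v_x,1)=F(-y;-v_x,1)$ is right, and quadratic convexity gives uniqueness and continuity of $v_x^*$). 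What your approach buys: it avoids the first variation formula, perpendicularity and geodesic-uniqueness issues, it yields the slightly stronger chain $\sys(\K)\leq\int_{-b}^{b}g^*(y)\,dy\leq h(\M)$, and with a trivial modification (take the midpoint of the minimizing segment instead of the unique minimizer) it extends to merely convex, non-quadratically convex metrics such as the extremal examples. What the paper's argument buys: it is coordinate-free, shorter, and structurally parallel to the proofs of the soul and soul-switching cases, since it identifies the height arc itself (rather than an auxiliary curve) as a noncontractible loop of $\K$.
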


\begin{proof}
Observe that the rotational symmetries of $\K$ leave both $\M$ and $\partial\M$ invariant. 
Given an arc $\alpha$ of $\M$, we denote by $\alpha^\theta$ the arc of~$\M$ symmetric to~$\alpha$ by the rotational symmetry of angle~$\theta$. 
Let $\gamma$ be a length-minimizing arc of~$\M$ parametrized (proportionally to its length) by~$[0,1]$ with endpoints in~$\partial \M$ inducing a nontrivial class in $\pi_1(\M,\partial\M)$.
Note that $\gamma$ is a geodesic arc of length~$h(\M)$.
By the first variation formula for Finsler metrics, \cf~\cite{Shen}, the geodesic arc~$\gamma$ is perpendicular to $\partial\M$.
It follows that the endpoints $\gamma(0)$ and $\gamma(1)$ of~$\gamma$ in~$\partial \M$ are distinct.
Since the Finsler metric is invariant under rotational symmetry, there exists $\theta\in (0,2\pi)$ such that $\gamma^{\theta}(0)=\gamma(1)$. 
Both symmetric arcs~$\gamma$ and $\gamma^{\theta}$ are perpendicular to $\partial\M$.
In particular, their tangent vectors at $\gamma^{\theta}(0)=\gamma(1)$ coincide up to sign. 
Therefore, the geodesic arcs $\gamma$ and~$\gamma^\theta$ agree up to reparametrization.
More precisely, $\gamma^\theta(s) = \gamma(1-s)$ for every $s \in [0,1]$.
Thus, $\gamma^{2\theta}=(\gamma^\theta)^\theta=\gamma$.
Hence, $2\theta = 2 \pi$, that is, $\theta = \pi$.
Therefore, the arc~$\gamma$ projects to a closed curve in~$\K$.
It follows that the length of $\gamma$ is at least $\sys(\K)$.
\end{proof}

\begin{example} \label{ex:extremal2}
The following two examples show that the inequality~\eqref{k2} is optimal.
\begin{enumerate}
\item The quotient of $\mathbb{R}^2$, endowed with the sup-norm, by the isometry group~$G$ generated by the glide reflection with parameter~$b=\frac{\pi}{2}$, \cf~Definition~\ref{def:sym}, is a Finsler Klein bottle with soul, soul-switching and rotational symmetries, of area~$2\pi$ and systole~$\pi$.
\item The Finsler Mobius band $\M_{F_{\frac{\pi}{3}}}$, \cf~Section~\ref{sec:candidates}, whose opposite boundary points are pairwise identified defines a Finsler Klein bottle with soul, soul-switching and rotational symmetries, of area~$2\pi$ and systole~$\pi$. 
\end{enumerate}
\end{example}

We believe that Theorem~\ref{finslerkleinbottle2} holds true for every Finsler Klein bottle (not necessarily invariant under soul, soul-switching or rotational symmetries).
More precisely, we state the following conjecture.

\begin{conjecture}
Let $\K$ be a Finsler Klein bottle. 
Then 
$$
\frac{\vol_{HT}(\K)}{\sys^2(\K)}\geq \frac{2}{\pi}.
$$
Moreover, the inequality is optimal.
\end{conjecture}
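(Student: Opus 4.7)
My plan is to attempt the natural reduction to the Mobius band inequality of Theorem~\ref{finslermobius}, mirroring the strategy that succeeded under the symmetry hypotheses of Theorem~\ref{finslerkleinbottle2}. First I would cut $\K$ open along a simple closed nonorientable curve $\sigma$ realizing (or nearly realizing) the nonorientable systole, producing a Finsler Mobius band $\M$ with $\vol_{HT}(\M)=\vol_{HT}(\K)$, whose boundary double-covers $\sigma$. Since the inclusion $\M\hookrightarrow\K$ is $\pi_1$-injective, one has $\sys(\M)\geq\sys(\K)$ automatically.

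The crucial step would then be the bound $h(\M)\geq\sys(\K)$, exactly as in Lemma~\ref{kcase1}: once this is in hand, the two cases of Theorem~\ref{finslermobius} yield
\[
\frac{\vol_{HT}(\M)}{\sys(\M)\,h(\M)}\geq \frac{2}{\pi}
\quad\text{or}\quad
\frac{1}{\pi}\Big(1+\frac{\sys(\M)}{h(\M)}\Big),
\]
and in both cases multiplying by $\sys(\M)\,h(\M)/\sys^2(\K)\geq 1$ closes the argument. Thus the whole conjecture hinges on producing the height lower bound $h(\M)\geq\sys(\K)$ for \emph{some} good choice of $\sigma$.

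The main obstacle is that, without a symmetry, a height arc $\gamma$ of $\M$ with endpoints $p,q\in\partial\M$ need not close up into a loop of $\K$: its projection is a chord between two possibly distinct points $\bar p,\bar q\in\sigma$. To get around this, I would try two refinements. The first is to select $\sigma$ by a variational principle (say a shortest one-sided closed geodesic, or a min--max over embedded one-sided simple closed curves) so that the first-variation formula forces the two ends of any geodesic height arc of $\M$ perpendicular to $\partial\M$ to be identified under the boundary gluing, generalizing the rotational argument of Lemma~\ref{kcase3}. The second, inspired by Lemma~\ref{lem:(x+y)/2}, is to find two disjoint embedded one-sided simple closed curves $\sigma_1,\sigma_2$ whose complement in $\K$ is an annulus, and to split $\K$ along a suitable ``equidistant'' curve into two Mobius bands to which one applies Theorem~\ref{finslermobius} separately, then adds up the volumes.

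Both refinements run into the same fundamental difficulty: without an ambient symmetry of the metric, there is no canonical way to ensure that the endpoints of a length-minimizing chord of the cut Mobius band align under the boundary identification, nor that both Mobius pieces simultaneously have systole bounded below by $\sys(\K)$. This is the Finsler analogue of the failure of transitivity of the isometry group of the flat Klein bottle, which in the Riemannian case is circumvented by the conformal-length method combined with averaging over the residual circle action (see Section~\ref{sec:klein} and \cite{Ba86,Sak88}). In the absence of a Finsler uniformization theorem, I do not see how to perform such an averaging, and this is, I suspect, precisely the reason the statement is left as a conjecture rather than a theorem.
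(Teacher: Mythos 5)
You have not proved the statement, and in fact it cannot be checked against a proof in the paper because the paper itself leaves it as a conjecture: the authors only establish the inequality under a soul, soul-switching or rotational symmetry (Theorem~\ref{finslerkleinbottle2}), precisely because the reduction you describe requires the height bound $h(\M)\geq\sys(\K)$ for the Mobius band obtained by cutting $\K$ along a soul, and that bound is only derived from the symmetry hypotheses in Lemmas~\ref{kcase2}, \ref{kcase4} and~\ref{kcase3}. Your proposal reproduces this reduction correctly (Lemma~\ref{kcase1} is exactly your ``crucial step'' computation), but the two refinements you suggest are not carried out, and each breaks at the same place you identify. For the variational choice of the cutting curve $\sigma$: even if $\sigma$ is a shortest one-sided geodesic, the first-variation argument of Lemma~\ref{kcase3} only forces a height arc to meet $\partial\M$ perpendicularly; without the rotational symmetry there is no map carrying one endpoint to the other, so nothing identifies the two feet of the chord under the boundary gluing, and the projected arc need not close up into a noncontractible loop of~$\K$. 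For the splitting into two Mobius bands along an equidistant curve: this is modeled on Lemma~\ref{lem:(x+y)/2}, but there the systole lower bound for the piece containing $\M'$ is obtained by reflecting the portion of a loop lying in $\M$ across $\partial\M$ via the soul-switching symmetry of the doubled Klein bottle; for an arbitrary metric on $\K$ no such isometric reflection exists, so you cannot guarantee $\sys(\M_i)\geq\sys(\K)$ for both pieces, nor control their heights.

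So the gap is concrete: you need either (i) a proof that for \emph{some} one-sided simple closed curve $\sigma$ the cut-open Mobius band satisfies $h(\M)\geq\sys(\K)$ (or a weaker substitute compensated elsewhere), or (ii) a genuinely different argument not factoring through Theorem~\ref{finslermobius}. Neither is supplied, and your own closing paragraph concedes this. Your diagnosis of \emph{why} it is hard — the absence of a Finsler uniformization/averaging step that in the Riemannian case reduces to symmetric metrics — agrees with the paper's discussion in Section~\ref{sec:klein}, but a correct diagnosis of the obstruction is not a proof of the conjecture.
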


\begin{remark}
If the conjecture is true, the Finsler systolic areas of $\RP^2$, $\T^2$ and $\K$ would be the same.
\end{remark}

\section{A non-optimal systolic inequality on Finsler Klein bottles} \label{sec:kleinfail}

In this section, we present a non-optimal systolic inequality on Finsler Klein bottles.

\begin{proposition} \label{prop:nonsharp}
Let $\K$ be a Finsler Klein bottle. 
Then 
$$
\frac{\vol_{HT}(\K)}{\sys^2(\K)}\geq \frac{\sqrt{2}}{\pi}.
$$
\end{proposition}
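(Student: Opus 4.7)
The plan is to cut $\K$ along one of its souls to produce a Finsler Mobius band, apply Theorem~\ref{finslermobius} to it, and combine the resulting estimate with the Finsler Loewner inequality (Theorem~\ref{Sa10}) on the orientation double cover of $\K$.

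Concretely, I would cut $\K$ open along a soul $\sigma$, obtaining a Finsler Mobius band $\M$ with $\vol_{HT}(\M) = \vol_{HT}(\K)$ and $\sys(\M) \geq \sys(\K)$ (the inclusion $\M \hookrightarrow \K$ is $\pi_1$-injective). Any height arc of $\M$ must cross the complementary soul $\sigma' \subset \K$, so $h(\M) \geq 2\,d(\sigma, \sigma')$. If $h(\M) \geq \sys(\K)$, Lemma~\ref{kcase1} already yields $\vol_{HT}(\K) \geq \tfrac{2}{\pi}\sys^2(\K)$, which is stronger than what is needed. Otherwise $h(\M) < \sys(\K) \leq \sys(\M)$, and the narrow case of Theorem~\ref{finslermobius} gives $\vol_{HT}(\K) \geq \tfrac{2}{\pi}\sys(\K)\,h(\M)$. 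In parallel, the orientation double cover $\widetilde{\K}$ is a Finsler two-torus with $\vol_{HT}(\widetilde{\K}) = 2\vol_{HT}(\K)$ and $\sys(\widetilde{\K}) \geq \sys(\K)$, so Theorem~\ref{Sa10} yields $\vol_{HT}(\K) \geq \tfrac{1}{\pi}\sys^2(\K)$.

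Combining the two bounds, when $h(\M) \geq \sys(\K)/\sqrt{2}$ the Mobius band estimate immediately gives $\vol_{HT}(\K) \geq \tfrac{\sqrt{2}}{\pi}\sys^2(\K)$ and the proof is complete. The main obstacle is therefore the remaining ``thin'' regime $h(\M) < \sys(\K)/\sqrt{2}$, in which neither bound individually suffices. To close this gap I would apply Proposition~\ref{Ke67finsler} (the Finsler Keen inequality) to $\widetilde{\K}$, choosing as the two generating curves of $H_1(\widetilde{\K})$ a lifted soul $\tilde\sigma$ (which double-covers $\sigma$ and so has length at least $2\sys(\K)$) and a shortest closed curve in a complementary homology class (which must cross the thin neck between the two lifted souls and hence has length controlled from below by $h(\M)$ through the covering). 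A case analysis on where the systolic class of $\widetilde{\K}$ lies, together with the Klein-bottle involution acting isometrically on $\widetilde{\K}$ and pairing $(m,n) \mapsto (m,-n)$ on $H_1(\widetilde{\K})$, should then yield a product lower bound for the two lengths large enough to extract the factor $\sqrt{2}$. This last step, reconciling Keen's extremal choice of lattice basis with the symmetry constraint, is the delicate part of the argument and is precisely what prevents the method from producing the conjecturally optimal $\tfrac{2}{\pi}$.
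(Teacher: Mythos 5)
Your reduction to the two cases $h(\M)\geq \sys(\K)$ (Lemma~\ref{kcase1}) and $h(\M)<\sys(\K)$ (narrow case of Theorem~\ref{finslermobius}), together with the double-cover bound $\vol_{HT}(\K)\geq \tfrac{1}{\pi}\sys^2(\K)$ from Theorem~\ref{Sa10}, is correct and does settle everything down to the regime $h(\M)<\sys(\K)/\sqrt{2}$. But that remaining regime is exactly where your argument stops being a proof, and the sketch you give for it does not close the gap. Proposition~\ref{Ke67finsler} does not allow you to ``choose'' the two generating curves: the quantities it controls are the systole of the torus and the length of the shortest closed curve homologically independent of a systolic loop, and the only general lower bound you have for each of these is $\sys(\K)$ (any noncontractible closed curve of the double cover projects to a noncontractible closed curve of $\K$), which reproduces the $\tfrac{1}{\pi}$ bound you already had. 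Your two proposed improvements do not repair this: the estimate ``the lifted soul has length at least $2\sys(\K)$'' concerns one particular representative, whereas the shortest curve in its homology class only projects to a closed curve of $\K$ freely homotopic to the \emph{square} of the soul class, for which nothing better than $\sys(\K)$ is available in general; and the ``neck-crossing'' estimate bounds the independent curve from below by a quantity comparable to $h(\M)$, which is precisely small in the regime you are trying to handle, so it cannot produce the missing factor $\sqrt{2}$. The final step you defer (``a case analysis \dots should then yield a product lower bound'') is therefore not a technicality but the whole content of the statement in the thin case, and as written the proposal does not prove the proposition.

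For comparison, the paper's proof is completely different and avoids any decomposition of~$\K$: for each $x$ replace the unit ball $B_x$ of the Finsler norm by its John ellipsoid $E(B_x)$, which satisfies $E(B_x)\subset B_x\subset \sqrt{2}\,E(B_x)$ in dimension~$2$. This defines a (continuous) Riemannian metric $g$ with $\tfrac{1}{\sqrt{2}}\sqrt{g}\leq F\leq \sqrt{g}$, whence $\sys(F)\leq \sys(g)$ and $\vol_{HT}(F)\geq \tfrac{1}{2}\vol(g)$; Bavard's optimal Riemannian inequality $\vol(g)\geq \tfrac{2\sqrt{2}}{\pi}\sys^2(g)$ then gives $\vol_{HT}(F)\geq \tfrac{\sqrt{2}}{\pi}\sys^2(F)$ in two lines, with no case analysis. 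If you want to salvage your approach, note that it would need a genuinely new input in the thin regime, namely a proof that the two homological systoles of the oriented double cover cannot both be close to $\sys(\K)$ when $h(\M)$ is small; nothing in the paper (nor in your sketch) establishes this.
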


\begin{proof}
Every symmetric convex body $C\subset \mathbb R^n$ admits a unique ellipsoid~$E(C)$ of maximal volume among the ellipsoids contained in $C$.
This ellipsoid, called John's ellipsoid, continuously varies with~$C$ for the Hausdorff topology.
Furthermore, it satisfies the double inclusion, \cf~\cite[Corollary~11.2]{grub} for instance,
\begin{equation} \label{eq:loewner}
E(C) \subset C \subset \sqrt{n} \, E(C).
\end{equation}

Given a Klein bottle~$\K$ with a Finsler metric~$F$, we define a continuous Riemannian metric~$g$ on~$\K$ by replacing the Minkowski norm~$F_x$ on each tangent space~$T_x\K$ by the inner product induced by the John ellipsoid~$E(B_x)$, where $B_x$ is the unit ball of~$F_x$.
The double inclusion~\eqref{eq:loewner} satisfied by~$E(B_x)$ implies that $\frac{1}{\sqrt{2}} \, \sqrt{g} \leq F \leq \sqrt{g}$.
Hence, 
\begin{equation} \label{eq:vol}
\sys(F) \leq \sys(g)  \quad \mbox{ and } \quad \frac{1}{2} \, \vol(g) \leq \vol_{HT}(F) 
\end{equation} 
From the optimal Riemannian systolic inequality on the Klein bottle~\cite{Ba86}, we obtain
\[
\vol_{HT}(F) \geq \frac{1}{2} \, \vol(g) \geq \frac{1}{2} \, \frac{2\sqrt{2}}{\pi} \, \sys^2(g) \geq \frac{\sqrt{2}}{\pi} \, \sys^2(F).
\]  
\end{proof}

\begin{remark}
The naive volume bound in~\eqref{eq:vol} can be improved into $\vol(g) \leq \frac{\pi}{2} \, \vol_{HT}(F)$, see the proof of~\cite[Theorem~4.11]{ABT}.
This leads to the better lower bound~$\frac{4\sqrt{2}}{\pi^2}$ in Proposition~\ref{prop:nonsharp}.
\end{remark}

\bibliographystyle{alpha}

\end{document}